\numberwithin{equation}{section}
\theoremstyle{plain}
\newtheorem{theorem}{Theorem}[section]
\newtheorem{lemma}[theorem]{Lemma}
\newtheorem{proposition}[theorem]{Proposition}
\newtheorem{remark}[theorem]{Remark}
\newtheorem{corollary}[theorem]{Corollary}
\newtheorem{definition}[theorem]{Definition}
\newcommand{\ud}{\mathrm{d}}
\newcommand{\tv}{\mathrm{d}_{\mathrm{TV}}}
\newcommand{\fa}{\mathfrak{a}}
\newcommand{\fb}{\mathfrak{b}}
\newcommand{\ii}{\mathsf{i}}
\newcommand{\e}{\varepsilon}
\newcommand{\lqq}{\leqslant}
\newcommand{\gqq}{\geqslant}
\begin{document}
\title[Cut-off phenomenon in the total variation and Wasserstein distances]{Profile cut-off phenomenon for the ergodic Feller root process}
\author{Gerardo Barrera}
\address{Center for Mathematical Analysis, Geometry and Dynamical Systems, Mathematics Department, Instituto Superior T\'ecnico, Universidade de Lisboa, 1049-001, Av. Rovisco Pais 1, 1049-001 Lisboa, Portugal.}
\email{gerardo.barrera.vargas@tecnico.ulisboa.pt}
\author{Liliana Esquivel}
\address{Department of Mathematical Sciences,Universidad de Puerto Rico, Recinto Mayag\"uez, 00681, Mayag\"uez, Puerto Rico.}
\email{liliana.esquivel@upr.edu}
\keywords{Affine processes, Asymptotic cut-off phenomenon, Brownian motion,  CIR model, Convergence to equilibrium, Decoupling, Fourier transform, Mixing times, Sharp transition, Square-root diffusion, Thermalization, Total variation distance, Wasserstein distance}

\begin{abstract}
The present manuscript is devoted to the study of the convergence to equilibrium as the noise intensity $\e>0$ tends to zero for ergodic random systems out of equilibrium driven by multiplicative non-linear noise  of the type
\begin{align*}
\ud X^\e_t(x) = (\fb-\fa X^\e_t(x))\ud t+\e \sqrt{X^\e_t(x)}\ud B_t, \quad 
X^{\e}_0(x) = x, \quad   t\gqq 0,
\end{align*}
where $x\gqq 0$, $\fa>0$ and $\fb>0$ are constants, and $(B_t)_{t \gqq 0}$ is a one dimensional standard Brownian motion. 
More precisely, we show the strongest notion of asymptotic profile cut-off phenomenon in the total variation distance and in the renormalized Wasserstein distance when $\e$ tends to zero with
explicit cut-off time, explicit time window, and explicit profile function.
In addition, asymptotics of the so-called mixing times are given explicitly.
\end{abstract}
\markboth{Abrupt thermalization for the Cox--Ingersoll--Ross (CIR) Model}{Asymptotic profile cut-off phenomenon}

\maketitle


\section{\textbf{Introduction}}\label{sec:introd}
The concept of the asymptotic cut-off phenomenon in the total variation distance was coined by D.~Aldous and P.~Diaconis in the
context of Markov chains models of card-shuffling, Ehrenfests’ urn and random transpositions, see~\cite{Aldous,Diaconis1996}.
Very generally, it refers to the asymptotically abrupt dynamical transition (it is also known as drastic/sharp/step/switch convergence) of the current state out of equilibrium of the Markov chain to its dynamic equilibrium in a cut-off window around the so-called mixing time as a function of the deck size.
In other words, the distance to equilibrium behaves as an approximate step function that remains close to its maximal value (diameter) for a while, and then suddenly drops to zero as the running time parameter reaches a critical threshold, which corresponds to the mixing time.
Since the state space of card-shuffling Markov chains models is finite, the preceding convergence to equilibrium is naturally measured in terms of the total variation distance. 
This dynamical phase transition has been observed in a broad spectrum of random models.
For instance, in the discrete state space the asymptotic cut-off phenomenon has been showed for
shuffling cards Markov dynamics~\cite{Aldous,Diaconis1996}, random walks on the $n$-dimensional hypercube~\cite{PDIbook,Levin2009}, birth and death Markov chains~\cite{BarreraJaviera2,Smith2017},
sparse Markov chains~\cite{Bordenave}, Glauber dynamics~\cite{Ding2009},  
SEP dynamics~\cite{Gantert20231,Lacoin2017}, SSEP dynamics~\cite{Goncalves2023}, TASEP dynamics~\cite{Elboim2022}, random walks in random regular graphs~\cite{Lubetzky2010}, mean-zero field zero-range process~\cite{Merle2019}, averaging processes~\cite{Quattropani2023}, and
sampling chains~\cite{BAmax,Lachaud2005}.
For more general Markov processes taking values in
continuous state-spaces there are relatively few results
showing the asymptotic cut-off phenomenon. They include linear
SDEs driven by 
additive Brownian noise and more general 
additive L\'evy perturbations~\cite{BHChaos,BHPWA,
BarreraLiu2021,BPEJP2020,BarreraJaviera3,Lachaud2005}, non-linear over-damped and under-damped Langevin dynamics driven by 
additive Brownian motion and more general 
additive L\'evy perturbations~\cite{Barrera2018,BarreraEJP2021,BPHWASD,BJJSP,BJAAP,LeeRamilInsuk2023}, linear heat and wave SPDEs driven by 
additive  L\'evy noise~\cite{BPHSPDE},
multivariate geometric Brownian motion~\cite{BPHGBM},
Dyson--Ornstein--Uhlenbeck process~\cite{Boursier}, the biased
adjacent walk on the simplex~\cite{Labbe2022}, Brownian motion on families of compact
Riemannian manifolds~\cite{Meliot}.
Recently, the asymptotic cut-off has been studied in: coagulation-fragmentation systems~\cite{Murray}, machine learning for neural networks~\cite{Avelin}, quantum Markov chains~\cite{Kastoryano12},
open quadratic fermionic systems~\cite{Vernier20},
chemical kinetics~\cite{BOK10}, viscous energy shell model~\cite{BHPPshell},
and random quantum circuits~\cite{Oh}.

In the present manuscript, we are interested in the asymptotic cut-off phenomenon with complexity positive parameter $\e$ in the weak noise limit $\e\to 0^+$ for 
the uniquely ergodic random dynamics $X^{\e,x}:=(X^{\e}_t(x))_{t\gqq 0}$ given by the unique strong solution of the stochastic differential equation (for short SDE) with 
\textit{multiplicative non-linear noise}
\begin{equation}\label{SDE}
\ud X^\e_t(x) = (\fb-\fa X^\e_t(x))\ud t+\e \sqrt{X^\e_t(x)}\ud B_t, \quad 
X^{\e}_0(x) = x, \quad   t\gqq 0,\, x\gqq 0,
\end{equation}
where $B:=(B_t)_{t\gqq 0}$ is a one dimensional standard Brownian motion, and $\fa$ and $\fb$ are positive constants.
The Fokker--Planck equation associated to~\eqref{SDE} is analyzed in 1951 by W.~Feller in~\cite{Feller1951} to study singular diffusion problems. It is then referred as  Feller’s square root process.
The model~\eqref{SDE} is introduced in 1985 by 
J.~Cox, J.~Ingersoll and 
S.~Ross  to describe the evolution of instantaneous interest rate at time $t$, $X^{\e}_t(x)$, with mean reversion
of the interest rate towards the long-term value $\nicefrac{\fb}{\fa}$ with the speed of adjustment $\fa$. 
Nowadays, it is known as a Cox--Ingersoll--Ross (CIR) process, see~\cite{Coxetalt1985}.
In Section~1.5 in~\cite{Boursier}
it is shown that cut-off phenomenon holds true, as the number of sampling increases,  for
a particular CIR model (Bessel process), which can be written as a square of an Ornstein--Uhlenbeck process.
Furthermore, singular SDEs of the type~\eqref{SDE} 
emerge in the model of evolution of chemical reactions and population dynamics as diffusion approximations of res-caled Markov jump processes, see for instance~\cite{Bakeretalt2018,EthierKurtz,Shen} and the references therein.
The stochastic process~\eqref{SDE}  belongs to the class of continuous-state branching processes with immigration (a.k.a. CBI processes).
The so-called fluid limit of~\eqref{SDE} (i.e. the deterministic ordinary differential equation obtained as $\e \to 0^+$) may be interpreted as a mean-field approximation and
the stochastic term $\e \sqrt{X^\e_t(x)}\ud B_t$
may be interpreted as
the demographic or internal noise. 
The study of this case will be relevant in the understanding of the cut-off phenomenon for a wide class of SDEs with non-linear coercive deterministic component and driven by multiplicative noise. Moreover, it also shows the complexity that brings to the problem the multiplicative noise.

By~Theorem~5.1~in~\cite{Fu} or Theorem~3.2~in~\cite{Ikeda} we have that~\eqref{SDE} has a unique strong solution in a filtered complete probability space  $(\Omega,\mathcal{F}, (\mathcal{F}_t)_{t\gqq 0},\mathbb{P})$ satisfying the usual conditions
where the standard Brownian motion $B$ is defined, and denote by $\mathbb{E}$ the expectation with respect to the probability measure $\mathbb{P}$.
We stress that the density of the marginal $X^{\e}_t(x)$  can be
expressed in terms of Bessel functions, 
see~\cite{Coxetalt1985,Feller1951}.
Since we are interested in the limit  $\e\to 0^+$,
without loss of generality we can assume that the so-called ergodic Feller regime holds true, i.e, $\e\in (0,\sqrt{2\fb})$. 
In the aforementioned ergodic Feller regime, the random dynamics given by~\eqref{SDE} is uniquely ergodic and its invariant probability measure $\mu^\e$ possesses a Gamma distribution with parameters
$\nicefrac{2\fb}{\e^2}$ and $\nicefrac{2\fa}{\e^2}$.
Since~\eqref{SDE} belongs to the class of affine diffusions, the marginal $X^{\e}_t(x)$ converges in distribution to $X^{\e}_\infty$ as $t$ tends to infinity. Moreover, the law of $X^{\e}_\infty$ does not depend on the initial datum $x$. The preceding convergence can be improved to be valid in the total variation distance.
For more details, we refer to~\cite{Coxetalt1985,Feller1951,Friesen,Jinetalt2013}.

We stress that in general, convergence in distribution does not imply convergence in total variation distance. For instance, the celebrated
DeMoivre--Laplace Central Limit Theorem does not hold true in the total variation distance, nevertheless, it holds true for other distances such as the 
Wasserstein distance, see Theorem~1 in~\cite{Bobkov 2013}.
Even for absolutely continuous (smooth) sequence of distributions, the convergence in distribution does not imply convergence in total variation distance, see for instance Fig.1 in~\cite{Chae} or Lemma~1.17 in~\cite{BarreraEJP2021}. 
In fact, the convergence in the total variation distance is very strong and hence it cannot be expected
from just the convergence in distribution without additional structure. 

Let $(X_n)_{n\in \mathbb{N}}$ be a sequence of random variables defined in a probability space $(\Omega, \mathcal{F},\mathbb{P})$.
In addition, let $X$ be a random variable defined in $(\Omega,\mathcal{F},\mathbb{P})$.
Assume that 
for each $n\in \mathbb{N}$, the distribution of the random variable $X_n$ is absolutely continuous with respect to the Lebesgue measure on $\mathbb{R}$ and its density $f_n$ is unimodal. 
Moreover, assume that the random variable $X$ has also a unimodal density $f$ with respect to the Lebesgue measure on $\mathbb{R}$. 
Then Theorem~2.3 (Ibragimov) 
in~\cite{DasGupta} or Lemma~3 in~\cite{Ibragimov} yield that the following statements are equivalent:
\begin{itemize}
\item[(i)] The random sequence $(X_n)_{n\in \mathbb{N}}$ converges in distribution to $X$ as $n$ tends to infinity.
\item[(ii)] The random sequence $(X_n)_{n\in \mathbb{N}}$ converges in the total variation distance to $X$ as $n$ tends to infinity.
\end{itemize}
Nevertheless, showing that the distribution of a random variable has a unimodal density is in general a difficult task, see Khinchin's Theorem in Theorem~4.5.1 
of~\cite{Lukacs}.
Without assuming that for each $n\in \mathbb{N}$, $f_n$ and $f$ are unimodal,
in~\cite{Boos,Sweeting} there are sufficient conditions on $(f_n)_{n\in \mathbb{N}}$ so that (i) is equivalent to (ii). However, such conditions requires to verify that the sequence $(f_n)_{n\in \mathbb{N}}$ is equicontinuous, which typically is a challenging task.

In this manuscript, 
we also present a Parseval–Plancherel–Fourier based approach to total variation convergence,
which may be of independent interest,
see Proposition~\ref{lem:fouriertv} and Proposition~\ref{prof:parseval} in Appendix~\ref{apendtv}.
By imposing (a) point-wise convergence of the characteristic functions, (b) integrability of the characteristic functions, and (c) uniform integrability of the characteristic functions, we then have convergence in total variation distance for the corresponding distributions.
We note that (a) is a minimal requirement to study convergence in total variation distance.
To verify~(b) and~(c) we point out that it is not needed explicit formulas for the characteristic functions, only 
(suitable) uppers bounds for the modulus of them are required.
We remark that there are sequences of random variables for which convergence in total variation distance holds true, nevertheless, (b) and (c) are not valid, see Remark~\ref{rem:examplet} in Appendix~\ref{apendtv}.

Bearing all this in mind, we are interested in the quantitative behavior of the  convergence to equilibrium
of the out of equilibrium marginals of the SDE~\eqref{SDE} in the total variation distance and in the Wasserstein distance.
The quantitative analysis of the convergence to equilibrium in a suitable distance or divergence of interest for ergodic Markov processes, in particular SDEs, is typically a difficult task. 
By an analytical approach
it typically requires a refined knowledge of the spectrum (roughly speaking, eigenvalues and their eigenfunctions) for the so-called generator of the SDE, which is an infinitely dimensional operator. In addition, Poincar\'e type inequalities, Log-Sobolev type inequalities, relative entropy inequalities, Lyapunov functions, Kantorovich potentials estimates and/or
Talagrand’s transportation inequalities
are needed to hold true in order to bound the carr\'e du champ  in the Bakry--\'Emery theory, see~\cite{Ledoux}.
We note that for short times, the law of the random dynamics starting from a deterministic initial datum is very far from the invariant probability measure and hence estimates using the aforementioned techniques are far to be optimal.
Alternative approaches are offered by the so-called Stein's method~\cite{Chenbook,Kusuoka,Ley,Nourdin} and probabilistic couplings methods~\cite{Lindvall}. Nevertheless, in general only upper bounds for such convergence can be obtained. In addition, coupling techniques for systems driven by multiplicative noises are more involved comparing with systems driven by additive noises as the SDEs aforementioned in the introduction.

Historically, the exponential quantitative analysis of the convergence  to equilibrium for Markov processes provides upper bounds of the type 
$Ce^{-\delta t}$ for a suitable distance between the law of the dynamics (marginal) at time $t$ and its corresponding invariant probability measure.
The positive \textit{ergodicity constants} (clearly not unique) $C$ and $\delta$ may depend on the complexity parameter (here denoted by $\e$), the initial datum of the dynamics and/or the dimension of the underlying state space.
Therefore, generically, it is a difficult task to  carry out the dependence with respect to the complexity parameter, initial datum and/or dimension, see~\cite{Meyn}. Numerical computations of ergodicity constants for stochastic dynamics have been studied in~\cite{Li2020}.
The constant $-\delta$ is related with the spectral gap of the generator of the Markov process.
As a consequence, the computation and/or estimation of such constants are typically done case by case.
In addition, when the distance can be written in dual-formulation (duality), it is possible to obtain lower bounds by a cunning choice of a distinguish statistic (observable), see for instance the Wilson method for Markov chains given in Proposition~7.7 of~\cite{Levin2009}.
Nevertheless, in general, we stress that lower bounds for the convergence  to equilibrium are typically out of reach.
Recently,
a criterion to obtain lower bounds on the rate of convergence to equilibrium in the so-called
$f$-variation of a continuous-time ergodic Markov process is establishing
in~\cite{Bresar}.

The objective of this manuscript is two-fold.
\begin{enumerate}
\item[(1)] Provide a robust technique to show convergence to equilibrium in the total variation distance and in the Wasserstein distance, see Subsection~\ref{sub:definitions} for the precise definition of the distances.
We remark that the Parseval--Plancherel--Fourier based approach used in this paper to study the asymptotic cut-off phenomenon in total variation distance for~\eqref{SDE} is robust and it 
is not based on explicit formulas for its marginals and/or the aforementioned analytical methods or functional inequalities.
It may be extended to more general  stochastic processes driven by general perturbations under suitable information of the characteristic functions, i.e., point-wise convergence, integrability, and uniform integrability when $\e\to 0^+$  of the spatial Fourier transforms are valid, see Proposition~\ref{lem:fouriertv} and Proposition~\ref{prof:parseval}~in Appendix~\ref{apendtv}.
In particular, after identifying the Central Limit Theorem when $\e\to 0^+$ for the invariant probability measure, that is, after a suitable shift and scaling, we obtain 
the strong notion of asymptotic profile cut-off phenomenon when $\e\to 0^+$ with
explicit cut-off time, explicit time window, and explicit profile function in terms of the classical error function, see Theorem~\ref{mainresult} below.
For the Wasserstein distance, the same holds true whenever uniform integrability when $\e\to 0^+$ of the corresponding $p$-th moments is valid. In addition, the profile function has an explicit exponential shape,
see Theorem~\ref{mainresultW} below.
We point out that
explicit profiles are typically out of reach and that
not all one-dimensional stable stochastic systems exhibit cut-off phenomenon, see 
Theorem~1.5 (Gradual convergence) in~\cite{BCostaJara}.
\item[(2)] Provide explicit asymptotics of the so-called mixing times in the total variation distance and in the Wasserstein distance, see Corollary~\ref{cor:tvmix} and Corollary~\ref{cor:mxw} below.
We emphasize that the asymptotics of the mixing times do not rely on explicit knowledge of optimal ergodicity constants aforementioned, which are generically a difficult task.
\end{enumerate}

The rest of the manuscript is organized as follows.
In Section~\ref{sub:definitions}, we review the necessary background about the Wasserstein distance and the total variation distance. We then rigorous introduce the asymptotic cut-off phenomenon and its relation with the mixing times. In Section~\ref{sub: main results}, we present the main results of the manuscript,  see Theorem~\ref{mainresult} and
Theorem~\ref{mainresultW} there. They give the profile cut-off phenomenon for~\eqref{SDE} in the total variation distance and a renormalized Wasserstein distance, respectively.  
In addition, explicit mixing time asymptotics are presented.
In Section~\ref{proof of mains results}  detailed proofs of Theorem~\ref{mainresult} and
Theorem~\ref{mainresultW} are presented.
More precisely, Theorem~\ref{mainresult} is showed in Subsection~\ref{sub:resulttv}, and 
Theorem~\ref{mainresultW} is proved in 
Subsection~\ref{subsec TW}.
Finally, we provide an appendix with auxiliary results that have been
used throughout the manuscript. The appendix is divided
in five sections. More specifically, 
in Appendix~\ref{apendtv} we briefly recall general basic properties of the total variation distance. Then we introduce the 
Parseval--Plancherel--Fourier approach for total variation convergence.   
In Appendix~\ref{apen:gamma} we show a useful scaling property for the Gamma distribution and prove a local central limit theorem for the stationary probability measure of~\eqref{SDE}. 
In Appendix~\ref{apenddist} we recall the convergence to equilibrium (in the total variation distance and in the Wasserstein distance) for the evolution of~\eqref{SDE}. In addition, we recall explicit formulas for the characteristic function and moment generating function of~\eqref{SDE}. In Appendix~\ref{ap:tools} we present asymptotic expansions that are crucial in the proofs of the main results.  Lastly in Appendix~\ref{mixtime}, mixing times equivalence of asymptotic profile cut-off is presented.

\subsection{\textbf{Preliminaries and asymptotic cut-off phenomenon}}\label{sub:definitions}
In this section, we review the necessary background about Wasserstein distance and the total variation distance. 
We point out that the weak convergence in the space of probability  measures with finite $p$-moment can be metrized by the Wasserstein distance.
In addition, it is a natural way to compare the discrepancy between the laws of two random elements, even for degenerate cases, where one random element is derived from the other by a small perturbation.
Nowadays, it is an important and well-studied mathematical concept in its own right.
Moreover, it has been ubiquitous in applications
 such as 
optimal transport theory, probability theory, partial differential equations, machine learning,
for further details and properties of the Wasserstein distance, we refer to the 
monographs~\cite{Ambrosio,Figalli,Panaretos2020, Peyreeee,Villani2009}.

We now define the asymptotic cut-off phenomenon in a general setting.
In what follows, we recall the definition
of coupling between two probability measures and then define the Wasserstein distance and the total variation distance. Let $(H,d)$ be a Polish space, a complete separable metric space,
equipped with its Borel $\sigma$-algebra 
$\mathcal{B}(H)$ and let $\mathbb{M}_1(H)$ be the set of probability measures defined in the measurable space $(H,\mathcal{B}(H))$.
Given $\nu_1\in \mathbb{M}_1(H)$ and $\nu_2\in \mathbb{M}_1(H)$,
we say that a probability measure $\Pi$ defined in the product measurable space $(H\times H,\mathcal{B}(H)\otimes \mathcal{B}(H))$ is a coupling (a.k.a. transportation plan or joint distribution) between $\nu_1$ and $\nu_2$ if and only if
$\Pi(B\times H)=\nu_1(B)$ and
$\Pi(H\times B)=\nu_2(B)$
for any measurable set $B\in \mathcal{B}(H)$.
We then denote by $\mathcal{C}(\nu_1,\nu_2)$ the set of all couplings between $\nu_1$ and $\nu_2$. We note that the product measure $\nu_1\otimes \nu_2$ is a coupling between $\nu_1$ and $\nu_2$ and hence 
$\mathcal{C}(\nu_1,\nu_2)\not=\emptyset$.  

For any $p>0$ we denote by $\mathbb{M}_{1,p}$ the set of Borel probability measures on $(H,\mathcal{B}(H))$ with 
 finite $p$-th moment, that is, 
$\mathbb{M}_{1,p}\subset \mathbb{M}_1$ and for some $u_0\in H$ it follows that
$\int_{H} d(u,u_0)^p \nu(\ud u)<\infty$ for all $\nu\in \mathbb{M}_{1,p}$ (the choice of the point $u_0$ is irrelevant).
Then the Wasserstein distance of order $p>0$,
$\mathcal{W}_p:\mathbb{M}_{1,p}\times \mathbb{M}_{1,p}\to [0,\infty)$, is defined as
\begin{equation}\label{eq:Wdef}
\begin{split}
\mathcal{W}_p(\nu_1,\nu_2)&:=\inf_{\Pi\in \mathcal{C}(\nu_1,\nu_2)}
\left(\int_{H\times H} d(u,v)^p\Pi(\ud u,\ud v)\right)^{1\wedge (1/p)},
\end{split}
\end{equation}
where for any real numbers $a$ and $b$ we denote the minimum between $a$ and $b$ by $a\wedge b$.
Roughly speaking,~\eqref{eq:Wdef} measures how expensive is the transportation of the measure $\nu_1$ into the measure $\nu_2$ with respect to the (Monge--Kantorovich) cost function $c(u,v):=d(u,v)^p$, $u,v\in H$.
We also define the total variation distance $\tv:\mathbb{M}_{1}\times \mathbb{M}_{1}\to [0,1]$ by
\begin{equation}\label{eq:tvdef}
\tv(\nu_1,\nu_2):=
\inf_{\Pi\in \mathcal{C}(\nu_1,\nu_2)}
\int_{H\times H} \mathbf{1}_{\{u\neq v\}}\Pi(\ud u,\ud v),
\end{equation}
where $\mathbf{1}_{U}$ denotes the indicator function of a given set $U\subset H$.

Let $X_1$ and $X_2$ be two random elements  defined on the probability space $(\Omega,\mathcal{F},\mathbb{P})$ with finite $p$-th moment and taking values on $H$.
The Wasserstein distance of order $p>0$ between $X_1$ and $X_2$
is defined by 
$\mathfrak{W}_p(X_1,X_2):=\mathcal{W}_p(\mathbb{P}_{X_1},\mathbb{P}_{X_2})$, where 
$\mathbb{P}_{X_1}$ and $\mathbb{P}_{X_2}$ are the push-forward
probability measures
$\mathbb{P}_{X_1}(B):=\mathbb{P}(X_1\in B)$ and 
$\mathbb{P}_{X_2}(B):=\mathbb{P}(X_2\in B)$ 
for any $B\in \mathcal{B}(H)$.
For convenience, 
we write $\mathcal{W}_p(X_1,X_2)$ in place of $\mathfrak{W}_p(X_1,X_2)$.
Similarly, we write 
$\tv(X_1,X_2)$ instead of $\tv(\mathbb{P}_{X_1},\mathbb{P}_{X_2})$.
Moreover, for the sake of intuitive reasoning and in a conscious abuse of notation we write
$\mathcal{W}_p(X_1,\nu_2)$
and 
$\tv(X_1,\nu_2)$ instead of $\mathcal{W}_p(X_1,X_2)$
and 
$\tv(X_1,X_2)$, respectively, where $\nu_2:=\mathbb{P}_{X_2}$. In other words,
\begin{equation}
\begin{split}
\mathcal{W}_p(\nu_1,\nu_2)&=\inf_{(X_1,X_2)}
\left(\mathbb{E}_{\mathbb{P}}[d(X_1,X_2)^p]\right)^{1\wedge (1/p)}
\end{split}
\end{equation}
and
\begin{equation}
\begin{split}
\tv(\nu_1,\nu_2)=
\inf_{(X_1,X_2)}
\mathbb{P}(X_1\neq X_2),
\end{split}
\end{equation}
where the infimum is taken over all pairs $(X_1,X_2)$ satisfying $\mathbb{P}_{X_j}=\nu_j$, $j=1,2$ and $\mathbb{E}_{\mathbb{P}}$ denotes the expectation with respect to $\mathbb{P}$.
While by definition, the Wasserstein distance or the total variation distance are minimizers of an expected value of a given cost function, they are always bounded
above by evaluating the expectation of the cost function in any coupling (for instance, the synchronous coupling), however, lower bounds are typically hard to establish.

In the sequel,  we embed our results in the context of cut-off phenomenon in a general framework.
The complexity parameter of the model is denoted by $\e>0$.
Let $(H_\e,d_\e)$ be a Polish space equipped with its corresponding Borel $\sigma$-algebra $\mathcal{B}(H_\e)$ and let $\mathbb{M}_1(H_\e)$ be the set of probability measures defined on $\mathcal{B}(H_\e)$.
Let $\widetilde{\mathbb{M}}_1(H_\e)\subset \mathbb{M}_1(H_\e)$ be
equipped with a distance or divergence $\mathrm{dist}_\e:\widetilde{\mathbb{M}}_1(H_\e)\times \widetilde{\mathbb{M}}_1(H_\e)\to [0,\infty]$,
and set
\[
\mathrm{Diam}_\e:=\sup_{\nu_1,\nu_2\in \widetilde{\mathbb{M}}_1(H_\e)}\mathrm{dist}_\e(\nu_1,\nu_2)\]
and assume that 
\begin{equation}\label{diameter}
\lim_{\e\to 0^+}\, \mathrm{Diam}_\e=:\mathrm{Diam}\in (0,\infty].
\end{equation}
We then consider $\mathcal{X}^{\e,x_\e}:=(\mathcal{X}^{\e}_t(x_\e))_{t\gqq 0}$ a random dynamical system (including deterministic dynamical system) in continuous time and taking values in $H_\e$, where $x_\e\in H_\e$ denotes the initial datum of the process $\mathcal{X}^{\e,x_\e}$.
In addition, we assume that for any $t\gqq 0$ the law of
$\mathcal{X}^{\e}_t(x_\e)$, here denoted by $\mathrm{Law}\,\mathcal{X}^{\e}_t(x_\e)$, belongs to $\widetilde{\mathbb{M}}_1(H_\e)$ and the existence of $\nu^\e\in \widetilde{\mathbb{M}}_1(H_\e)$ such that
\[\lim\limits_{t\to \infty}\mathrm{dist}_\e(
\mathrm{Law}\,\mathcal{X}^{\e}_t(x_\e),\nu^\e)=0.
\]
Since we have already fixed the distance $\mathrm{dist}_\e$ for the convergence to the limiting law $\nu^{\e}$,
we now define the asymptotic cut-off phenomenon.

\begin{definition}[Asymptotic cut-off phenomenon]\label{def:asymst}
We say that the family of random dynamical systems $(\mathcal{X}^{\e,x_\e},\nu^\e,\mathrm{dist}_\e)_{\e>0}$ exhibits:
\begin{itemize}
\item[(i)] 
\textit{asymptotic pre-cut-off}  
at  $(t_{\e,x_\e})_{\e>0}$ with pre-cut-off time $t_\e:=t_{\e,x_\e}$
if
$t_{\e}\to \infty$, as $\e \to 0^+$
and there are constants $0<C_*<C^*<\infty$ such that  
\begin{equation}\label{def:precut1}
\lim\limits_{\e \rightarrow 0^+}\,
\mathrm{dist}_\e(\mathrm{Law}\,\mathcal{X}^{\e}_{\delta\cdot t_{\e}}(x_\e),\nu^\e)=
\begin{cases}
\mathrm{Diam} & \quad 
\textrm{ for } \quad \delta=C_*,\\
0 & \quad \textrm{ for } \quad\delta=C^*.
\end{cases}
\end{equation}
\item[(ii)] \textit{asymptotic cut-off}  
at $(t_{\e,x_\e})_{\e>0}$ with cut-off time  $t_\e:=t_{\e,x_\e}$
if
$t_{\e}\to \infty$, as $\e \to 0^+$ and  
\begin{equation}\label{def:cut1}
\lim\limits_{\e \rightarrow 0^+}\,
\mathrm{dist}_\e(\mathrm{Law}\,\mathcal{X}^{\e}_{\delta\cdot t_{\e}}(x_\e),\nu^\e)=
\begin{cases}
\mathrm{Diam} & \quad \textrm{ for } \quad \delta\in(0,1),\\
0 & \quad \textrm{ for } \quad\delta\in (1,\infty).
\end{cases}
\end{equation}
\item[(iii)] 
\textit{asymptotic window cut-off}  at $((t_{\e,x_\e},\omega_{\e,x_\e}))_{\e>0}$
with cut-off time $t_\e:=t_{\e,x_\e}>0$ 
and time window $\omega_\e:=\omega_{\e,x_\e}>0$
if
$t_{\e}\to \infty$, as $\e \to 0^+$,
$\lim\limits_{\e\to 0^+}\frac{\omega_{\e}}{t_{\e}}=0$, and the limits
\begin{equation}\label{def:cut2}
G_*(r):=\liminf\limits_{\e \rightarrow 0^+}\,
\mathrm{dist}_\e(\mathrm{Law}\,\mathcal{X}^{\e}_{t_{\e}+r\cdot \omega_{\e}},\nu^\e)
\end{equation}
and
\begin{equation}\label{def:cut2nuevo}
G^*(r):=\limsup\limits_{
\e \rightarrow 0^+}\,\mathrm{dist}_\e(\mathrm{Law}\,\mathcal{X}^{\e}_{t_{\e}+r\cdot \omega_{\e}},\nu^\e)
\end{equation}
satisfy
\[
\lim\limits_{r\rightarrow -\infty}G_*(r)=\mathrm{Diam}\quad 
\textrm{ and }
\quad
\lim\limits_{r\rightarrow \infty}G^*(r)=0.
\]
\item[(iv)] \textit{asymptotic profile cut-off} at 
$((t_{\e,x_\e},\omega_{\e,x_\e}))_{\e>0}$ 
with cut-off time $t_\e:=t_{\e,x_\e}>0$, 
time window $\omega_\e:=\omega_{\e,x_\e}>0$ and
profile function $G:\mathbb{R}\rightarrow [0,\mathrm{Diam}]$
if 
$t_{\e}\to \infty$, as $\e \to 0^+$,
$\lim\limits_{\e\to 0^+}\frac{\omega_{\e}}{t_{\e}}=0$,
\begin{equation}\label{def:cut3}
\lim\limits_{\e \rightarrow 0^+}\,\mathrm{dist}_\e(\mathrm{Law}\,\mathcal{X}^{\e}_{t_{\e}+r\cdot \omega_{\e}},\nu^\e)=:G(r) \quad 
\textrm{ exists for any }\quad r\in \mathbb{R}.
\end{equation}
In addition, it satisfies
\[
\lim\limits_{r\rightarrow -\infty}\,G(r)=\mathrm{Diam}\quad 
\textrm{ and } 
\quad\lim\limits_{r\rightarrow \infty}\,G(r)=0.
\]
\end{itemize}
\end{definition}

We point out that find an explicit profile function $G$ is in general difficult.
Although it is believed that many families of processes exhibit asymptotic cut-off phenomenon, showing such occurrence is often an extremely challenging task 
even for simple Markov chains. It requires the full understanding of the behavior of the underlying dynamics.
It is clear that (iv) implies (iii) and (ii) implies (i). 

In what follows, we assume that for any fixed $\e>0$, the map 
\begin{equation}\label{eq:monotonic}
[0,\infty)\ni t \mapsto  \mathrm{dist}_\e(\mathrm{Law}\,\mathcal{X}^{\e}_{t},\nu^\e)\quad 
\textrm{ is monotonic decreasing.}   
\end{equation}
By~\eqref{eq:monotonic} we have that $G_*$, $G^*$  are monotonic decreasing.
We note that~\eqref{eq:monotonic} is natural in the context of homogeneous Markov processes and holds true for the so-called Dyson--Ornstein--Uhlenbeck process and
various distances/discrepancies of interest, see for instance Lemma~B.3 (Monotonicity) in~\cite{Boursier}.
As it is remarked in the ``Elements of proof of Lemma B.3" the monotonicity comes from the Markov nature of the process and the so-called $\Phi$-convexity, see p.~507 in~\cite{Boursier} for further details.

Under~\eqref{eq:monotonic} one can see that (iii) implies~(ii).

In the sequel, we define the so-called $\eta$-mixing time.
For any $\eta\in (0,\mathrm{Diam}_\e)$ the $\eta$-mixing time, $\tau^{\e,\mathrm{dist}_{\e}}_{\mathrm{mix}}(\eta)$, is defined by
\begin{equation}\label{def:mixingtimes}
\tau^{\e,\mathrm{dist}_{\e}}_{\mathrm{mix}}(\eta):=\inf\{t\gqq 0: \mathrm{dist}_\e(\mathrm{Law}\,\mathcal{X}^{\e}_{t},\nu^\e)\lqq \eta\}.  
\end{equation}
In other words, $\tau^{\e,\mathrm{dist}_{\e}}_{\mathrm{mix}}(\eta)$ is the time required by the process $\mathcal{X}^\e$
to be close to its limiting law up to a prescribed error $\eta\in (0,\mathrm{Diam}_\e)$.
For discrete finite state space, the definition of $\eta$-mixing time additionally has an optimization over the worst possible initial datum of the random dynamics.
However, since our purpose is to describe
the dependence of the mixing time with respect to the initial state of the process, we use the definition~\eqref{def:mixingtimes} for $\eta$-mixing time.
We also stress that the cut-off times may have strong dependence with respect to the initial datum of the random dynamics,
see for instance~\cite{BJJSP,BJAAP}.

For the total variation distance, it is known that the asymptotic
cut-off phenomenon can be also interpreted as a mixing time, see Chapter~18 of~\cite{Levin2009} or~\cite{BarreraJaviera1}. 

In the sequel, we show that the asymptotic cut-off phenomenon in the preceding framework (Definition~\ref{def:asymst}) can be also interpreted as a mixing time in the following precise sense.

\begin{proposition}[Asymptotics for the mixing times]\label{proposition asymptotic}
The following statements are valid.
\begin{itemize}
\item[(1)]
Assume that (i)  holds true. 
There exist constants $0<c_*<c^*<\infty$ such that
for any $\eta\in (0,\mathrm{Diam})$ it follows that
\begin{equation}
c_*<\liminf\limits_{\e\to 0^+}
\frac{t_\e}
{\tau^{\e,\mathrm{dist}_{\e}}_{\mathrm{mix}}(\eta)}
\lqq 
\limsup\limits_{\e\to 0^+}
\frac{t_\e}
{\tau^{\e,\mathrm{dist}_{\e}}_{\mathrm{mix}}(\eta)}<c^*.
\end{equation}

\item[(2)]
Assume that (ii) holds true. Then for any $\eta\in (0,\mathrm{Diam})$ it follows that
\begin{equation}
\lim\limits_{\e\to 0^+}
\frac{t_\e}
{\tau^{\e,\mathrm{dist}_{\e}}_{\mathrm{mix}}(\eta)}=1.
\end{equation}

\item[(3)]
Assume that (iii) holds true. 
In addition, assume that $G^*$ and $G_*$ are continuous and strictly decreasing.
Then for any $\eta\in (0,\mathrm{Diam})$ it follows that
\begin{equation}
 t_\e+\omega_\e \cdot (G_{*})^{-1}(\eta)+\mathrm{o}(\omega_\e)\lqq 
\tau^{\e,\mathrm{dist}_{\e}}_{\mathrm{mix}}(\eta)\lqq t_\e+\omega_\e \cdot (G^{*})^{-1}(\eta)+\mathrm{o}(\omega_\e),
\end{equation}
where $(G_*)^{-1}(\eta):=\inf\{r\gqq 0: G_*(r)\lqq \eta\}$
 and 
 $(G^*)^{-1}(\eta):=\inf\{r\gqq 0: G^*(r)\lqq \eta\}$.

\item[(4)]
Assume that (iv) holds true. 
In addition, assume that $G$ is continuous and strictly decreasing.
Then for any $\eta\in (0,\mathrm{Diam})$ it follows that
\begin{equation}
\tau^{\e,\mathrm{dist}_{\e}}_{\mathrm{mix}}(\eta)=t_\e+ G^{-1}(\eta)\cdot \omega_\e +\mathrm{o}(\omega_\e),\quad 
\textrm{ as }\quad \e\to 0^+,
\end{equation}
where $G^{-1}(\eta):=\inf\{r\in \mathbb{R}: G(r)\lqq \eta\}$ and $\mathrm{o}(\omega_\e)/\omega_\e\to 0^+$ as $\e\to 0^+$.
\end{itemize}
\end{proposition}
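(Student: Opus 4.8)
The plan is to deduce all four parts from the monotonicity assumption~\eqref{eq:monotonic}, which converts every assertion about the mixing time into an assertion about the value of $\mathrm{dist}_\e$ at a single, well-chosen time. Concretely, by~\eqref{eq:monotonic} the set $\{t\gqq 0:\mathrm{dist}_\e(\mathrm{Law}\,\mathcal{X}^\e_t,\nu^\e)\lqq\eta\}$ is, for each fixed $\e>0$, an up-closed half-line, so that $\mathrm{dist}_\e(\mathrm{Law}\,\mathcal{X}^\e_s,\nu^\e)\lqq\eta$ forces $\tau^{\e,\mathrm{dist}_{\e}}_{\mathrm{mix}}(\eta)\lqq s$, while $\mathrm{dist}_\e(\mathrm{Law}\,\mathcal{X}^\e_s,\nu^\e)>\eta$ forces $\tau^{\e,\mathrm{dist}_{\e}}_{\mathrm{mix}}(\eta)\gqq s$. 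I would also record at the outset that $\tau^{\e,\mathrm{dist}_{\e}}_{\mathrm{mix}}(\eta)$ is finite (since $\mathrm{dist}_\e(\mathrm{Law}\,\mathcal{X}^\e_t,\nu^\e)\to 0$ as $t\to\infty$) and eventually positive, in fact $\tau^{\e,\mathrm{dist}_{\e}}_{\mathrm{mix}}(\eta)\to\infty$ (from the lower bounds below, since $t_\e\to\infty$), so the ratios in~(1)--(2) make sense.

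For part~(2), fix $\eta\in(0,\mathrm{Diam})$ and $\delta\in(0,1)$; by~\eqref{def:cut1}, $\mathrm{dist}_\e(\mathrm{Law}\,\mathcal{X}^\e_{\delta t_\e},\nu^\e)\to\mathrm{Diam}>\eta$, hence this distance exceeds $\eta$ for all small $\e$, so $\tau^{\e,\mathrm{dist}_{\e}}_{\mathrm{mix}}(\eta)\gqq\delta t_\e$ and $\limsup_{\e\to 0^+}t_\e/\tau^{\e,\mathrm{dist}_{\e}}_{\mathrm{mix}}(\eta)\lqq 1/\delta$; letting $\delta\uparrow 1$ gives $\limsup\lqq 1$. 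Symmetrically, for $\delta>1$, \eqref{def:cut1} gives $\mathrm{dist}_\e(\mathrm{Law}\,\mathcal{X}^\e_{\delta t_\e},\nu^\e)\to 0<\eta$, hence $\tau^{\e,\mathrm{dist}_{\e}}_{\mathrm{mix}}(\eta)\lqq\delta t_\e$ for all small $\e$ and $\liminf_{\e\to 0^+}t_\e/\tau^{\e,\mathrm{dist}_{\e}}_{\mathrm{mix}}(\eta)\gqq 1/\delta\to 1$ as $\delta\downarrow 1$; together this yields the limit $1$. Part~(1) is the identical argument with the pre-cut-off constants $C_*,C^*$ of~\eqref{def:precut1} in place of $\delta$, giving $1/C^*\lqq\liminf\lqq\limsup\lqq 1/C_*$ for every $\eta$; one then takes the $\eta$-free constants $c_*:=\tfrac{1}{2C^*}$ and $c^*:=\tfrac{2}{C_*}$, which satisfy $0<c_*<c^*$ because $C_*<C^*$.

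For part~(3), fix $\eta\in(0,\mathrm{Diam})$ and set $r_*:=(G_*)^{-1}(\eta)$, $r^*:=(G^*)^{-1}(\eta)$; continuity and strict monotonicity of $G_*,G^*$ make these well defined, with $G_*(r_*)=\eta$ and $G^*(r^*)=\eta$ for $\eta$ in the generic range. Given $\kappa>0$: since $G^*$ is strictly decreasing, $G^*(r^*+\kappa)<\eta$, so by the definition~\eqref{def:cut2nuevo} of $G^*$ as a $\limsup$ we get $\mathrm{dist}_\e(\mathrm{Law}\,\mathcal{X}^\e_{t_\e+(r^*+\kappa)\omega_\e},\nu^\e)<\eta$ for all small $\e$; monotonicity gives $\tau^{\e,\mathrm{dist}_{\e}}_{\mathrm{mix}}(\eta)\lqq t_\e+(r^*+\kappa)\omega_\e$, hence $\limsup_{\e\to 0^+}\big(\tau^{\e,\mathrm{dist}_{\e}}_{\mathrm{mix}}(\eta)-t_\e-r^*\omega_\e\big)/\omega_\e\lqq\kappa$, and $\kappa\downarrow 0$ yields the upper bound. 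Dually, $G_*(r_*-\kappa)>\eta$, so by~\eqref{def:cut2} ($G_*$ a $\liminf$) $\mathrm{dist}_\e(\mathrm{Law}\,\mathcal{X}^\e_{t_\e+(r_*-\kappa)\omega_\e},\nu^\e)>\eta$ for all small $\e$; monotonicity gives $\tau^{\e,\mathrm{dist}_{\e}}_{\mathrm{mix}}(\eta)\gqq t_\e+(r_*-\kappa)\omega_\e$, and $\kappa\downarrow 0$ yields the lower bound. Part~(4) is the specialization $G_*=G^*=G$: repeating the two perturbations with $G^{-1}(\eta)\pm\kappa$ (now the inverse over all of $\mathbb{R}$, using~\eqref{def:cut3}) sandwiches $\tau^{\e,\mathrm{dist}_{\e}}_{\mathrm{mix}}(\eta)$ between $t_\e+(G^{-1}(\eta)-\kappa)\omega_\e$ and $t_\e+(G^{-1}(\eta)+\kappa)\omega_\e$ for all small $\e$, and $\kappa\downarrow 0$ gives the stated identity.

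The main difficulty I foresee is bookkeeping with the two limits in~(3): because $G^*$ is a $\limsup$, the inequality $G^*(r)<\eta$ does transfer to ``$\mathrm{dist}_\e<\eta$ for all small $\e$'', but because $G_*$ is only a $\liminf$, $G_*(r)<\eta$ transfers only along a subsequence; this is exactly why the upper mixing-time bound must be driven by $G^*$, the lower bound by $G_*$, and why the error terms in~(3) have to be read one-sidedly ($\mathrm{o}(\omega_\e)$ from above for the upper bound, from below for the lower bound). The only other point needing a short separate check is the degenerate situation in~(3) where $(G_*)^{-1}(\eta)$ or $(G^*)^{-1}(\eta)$ equals $0$, i.e.\ $\eta$ so large that $G_*(0)\lqq\eta$ or $G^*(0)\lqq\eta$; there one replaces the equalities $G_*(r_*)=\eta$, $G^*(r^*)=\eta$ by the corresponding inequalities and argues verbatim as above.
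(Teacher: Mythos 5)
Your proof is correct and follows essentially the same route as the paper's: Appendix~\ref{mixtime} proves only Item~(4), by evaluating $\mathrm{dist}_\e$ at the times $t_\e+r\cdot\omega_\e$ with $r=G^{-1}(\eta\mp\delta)$ and invoking the monotonicity~\eqref{eq:monotonic}, which is the mirror image of your perturbation $G^{-1}(\eta)\pm\kappa$ in the $r$-variable (continuity of $G^{-1}$ there versus strict decrease of $G$ here), and Items~(1)--(3) are declared ``analogous'' there, with your versions being exactly the natural analogues, including the correct one-sided use of the $\limsup$/$\liminf$ in~(3). The only caveat concerns the degenerate case of Item~(3) that you flag at the end: when $G_*(0)<\eta$ the lower bound cannot be recovered ``verbatim'' (one would need $G_*(-\kappa)>\eta$, which may fail, and indeed a subsequence with $\mathrm{dist}_\e(t_\e+r'\omega_\e)<\eta$ for some $r'<0$ can make $\liminf(\tau^{\e,\mathrm{dist}_\e}_{\mathrm{mix}}(\eta)-t_\e)/\omega_\e$ strictly negative), but this traces back to the paper's own truncated definition $(G_*)^{-1}(\eta)=\inf\{r\gqq 0: G_*(r)\lqq\eta\}$ rather than to any flaw in your argument.
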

The proof is given in 
Appendix~\ref{mixtime}.

\begin{remark}[Non-uniqueness of cut-off times and time windows]
Assume that~\eqref{eq:monotonic} is valid.
In addition, assume that the functions $G_*$, $G^*$ given in
Item~(iii) of Definition~\ref{def:asymst} are continuous.
We point out that the cut-off times and time windows are unique up to an  equivalent relation.
More precisely,
if 
$((t'_{\e,x_\e},\omega'_{\e,x_\e}))_{\e>0}$ are functions
such that 
\[
\lim _{\e\to 0^{+}} \frac{t_{\e,x_\e}}{ t'_{\e,x_\e}}=1
\quad \textrm{ and }\quad
\lim_{\e\to 0^{+}} \frac{\omega_{\e,x_\e}}{ \omega'_{\e,x_\e}}=c>0,
\]
we have that 
Item~(iii) in Definition~\ref{def:asymst} hold true for 
$((t'_{\e,x_\e},\omega'_{\e,x_\e}))_{\e>0}$ and the functions $G_*$ and $G^*$ are now evaluated at $r/c$.
In particular, when Item~(iv) in Definition~\ref{def:asymst} hold true and $G$ is continuous, Item~(iv) in Definition~\ref{def:asymst} hold true for 
$((t'_{\e,x_\e},\omega'_{\e,x_\e}))_{\e>0}$ and the function $G$ is evaluated at $r/c$.
\end{remark}

We remark that the previous definition of asymptotic cut-off phenomenon naturally applies for discrete time systems using the floor function in the running time of the system, that is, 
replacing $t$ by $\lfloor t\rfloor$, accordingly.
In the context of ergodic Markov chains with finite state space, the asymptotic cut-off phenomenon is typically measure in the total variation distance and the complexity parameter is the cardinality of the state space. In addition, it is shown that it can be expressed at increasingly sharp levels, i.e., (iv) implies (iii), (iii) yields (ii) and (ii) gives (i), but the reciprocal implications are not valid in general.

Bearing all this in mind
we provide a complete characterization of when cut-off occurs which is exactly the content  of the following subsection.

\subsection{\textbf{Main results and their consequences}}
\label{sub: main results}
In this section, we establish the main results of this manuscript. 
Recall that for any fixed noise intensity $\e\in (0,\sqrt{2\fb})$ there exists a unique invariant probability measure $\mu^\e$ for the random dynamics given 
in~\eqref{SDE}, that is, $X^\e_t(\mu^\e)\stackrel{d}{=}\mu^\e$ for any $t\gqq 0$, where $\stackrel{d}{=}$ denotes equality in distribution sense.
In addition, for any $p>0$
and any initial condition $x\gqq 0$ of~\eqref{SDE} it follows that
\begin{equation}
\lim\limits_{t\to \infty}\mathcal{W}_p
\left(X^{\e}_t(x),\mu^\e\right)=0\quad \textrm{ and } \quad 
\lim\limits_{t\to \infty}\tv
\left(X^{\e}_t(x),\mu^\e\right)=0,
\end{equation}
see Lemma~\ref{lem:ergodicity} in Appendix~\ref{apenddist}.
We note that $\mathrm{Diam}_{\mathcal{W}_p}=\infty$ and $\mathrm{Diam}_{\mathrm{TV}}=1$.

Following the notation in Subsection~\ref{sub:definitions}, 
the complexity parameter is $\e\in (0,\sqrt{2\fb})$, 
$H_\e:=\mathbb{R}$ equipped with the standard Euclidean distance, 
$x_\e:=x\in [0,\infty)$,
$\mathcal{X}^{\e,x_\e}:=X^{\e,x_\e}$ is given in~\eqref{SDE} and $\nu^\e:=\mu^\e$ is the invariant probability measure for~\eqref{SDE}.

The first main result of this manuscript is the following profile cut-off phenomenon in total variation distance, i.e., $\widetilde{\mathbb{M}}_1(\mathbb{R}):=\mathbb{M}_1(\mathbb{R})$ and 
$\mathrm{dist}_\e:=\tv$.
 
\begin{theorem}[Asymptotic profile cut-off phenomenon for CIR models I]\label{mainresult}
\hfill
Assume that $x\in [0,\infty)\setminus\{\frac{\fb}{\fa}\}$.
The family of CIR models 
$(X^{\e,x})_{\e>0}$ defined 
in~\eqref{SDE} exhibits asymptotic profile cut-off  phenomenon as $\e$ tends to zero in the total variation distance
\begin{equation}\label{eq:timecut}
\textrm{at cut-off time }\quad
t_{\e}:=
\frac{1}{\fa}\ln\left(\frac{1}{\e}\right)\quad \textrm{and time window }\quad \omega_{\e}:=\frac{1}{\fa}.
\end{equation}
In other words, for any $r\in \mathbb{R}$ the following limit is valid
\begin{equation}\label{eq:Gx}
\lim\limits_{\e\to 0^+}
\tv\left(X^{\e}_{t_\e+r\cdot \omega_\e}(x),\mu^\e\right)=
\tv\left(|C_x|\cdot e^{-r}+\mathcal{G},\mathcal{G}\right)=:G^{\mathrm{TV}}_x(r),
\end{equation}
where the constant $C_x$ is given by
\begin{equation}\label{def:Cx}
C_x:=\frac{\sqrt{2\fb}}{\fb}(\fa x-\fb)=\frac{\fa\sqrt{2\fb}}{\fb}(x-\fb/\fa),
\end{equation}
and $\mathcal{G}$ denotes a random variable with standard Gaussian law. 
Moreover, 
\begin{equation}\label{eq:inftylimites}
\lim_{r\to -\infty} G^{\mathrm{TV}}_x(r)=1\quad 
\textrm{ and }\quad  \lim_{r\to \infty} G^{\mathrm{TV}}_x(r)=0.
\end{equation}
In addition, for $x=\frac{\fb}{\fa}$
and for any function $  (s_\e)_{\e>0}$ such that $s_\e\to \infty$ as $\e\to 0^+$
it follows that
\begin{equation}\label{eq:nocutf}
\lim\limits_{\e\to 0^+}
\tv\left(X^{\e}_{s_{\e}}(x),\mu^\e\right)
=0.
\end{equation}
In particular, the family of CIR models 
$(X^{\e,\fb/\fa})_{\e>0}$ defined in~\eqref{SDE}  does not exhibit  asymptotic cut-off phenomenon as $\e$ tends to zero in the total variation distance.
\end{theorem}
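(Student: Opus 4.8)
The plan is to reduce the statement to a Central-Limit-Theorem-type analysis of the marginal $X^\e_t(x)$ and the invariant law $\mu^\e$ after a suitable shift and rescaling, and then to apply the Parseval--Plancherel--Fourier criterion for total variation convergence (Proposition~\ref{lem:fouriertv} and Proposition~\ref{prof:parseval}) announced in the excerpt. The key observation is that the deterministic fluid limit solves $\dot{x}_t=\fb-\fa x_t$, so $x_t=\frac{\fb}{\fa}+(x-\frac{\fb}{\fa})e^{-\fa t}$, and both $X^\e_t(x)$ and $X^\e_\infty\sim\mu^\e$ concentrate around $\frac{\fb}{\fa}$ with fluctuations of order $\e$. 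Concretely, one should write $X^\e_t(x)=\frac{\fb}{\fa}+(x-\frac{\fb}{\fa})e^{-\fa t}+\e\, \xi^\e_t+\text{(higher order)}$ and show that the centered-and-$\e^{-1}$-rescaled marginal converges (as $\e\to0^+$, with $t=t_\e+r\omega_\e$) to a Gaussian; since the density of $X^\e_t(x)$ is available via Bessel functions one can instead just work with its explicit characteristic function / moment generating function (Appendix~\ref{apenddist}) and extract the leading-order mean and variance.

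First I would fix $x\neq\frac{\fb}{\fa}$ and substitute $t=t_\e+r\omega_\e=\frac{1}{\fa}\ln(1/\e)+\frac{r}{\fa}$, so that $e^{-\fa t}=\e\, e^{-r}$. Plugging this into the (known) mean $\mathbb{E}[X^\e_t(x)]=\frac{\fb}{\fa}+(x-\frac{\fb}{\fa})e^{-\fa t}=\frac{\fb}{\fa}+\e\,e^{-r}(x-\frac{\fb}{\fa})$, one sees the deviation of the mean of $X^\e_t(x)$ from that of $\mu^\e$ (which is $\frac{\fb}{\fa}$) is exactly of order $\e$, with coefficient $e^{-r}(x-\frac{\fb}{\fa})$. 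Next I would compute the variance of $X^\e_t(x)$ from the explicit formula and check it equals $\mathrm{Var}(\mu^\e)+\mathrm{o}(\e^2)=\frac{\fb\e^2}{2\fa^2}+\mathrm{o}(\e^2)$, uniformly for $r$ in compacts. Then, rescaling by $\e^{-1}$ and shifting by $\frac{\fb}{\fa\e}$, the random variable $\e^{-1}(X^\e_{t_\e+r\omega_\e}(x)-\frac{\fb}{\fa})$ should converge in distribution to $\mathcal{N}(m_x(r),\sigma^2)$ with $m_x(r)=e^{-r}(x-\frac{\fb}{\fa})$ and $\sigma^2=\frac{\fb}{2\fa^2}$, while $\e^{-1}(X^\e_\infty-\frac{\fb}{\fa})\Rightarrow\mathcal{N}(0,\sigma^2)$; normalizing $\sigma$ to $1$ produces precisely the shift $|C_x|e^{-r}$ with $C_x=\frac{\fa\sqrt{2\fb}}{\fb}(x-\frac{\fb}{\fa})$ as in~\eqref{def:Cx}. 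Finally, upgrading this convergence in distribution to convergence in total variation distance — using that $\tv$ is translation-/scale-invariant under the affine maps applied — requires verifying hypotheses (b) integrability and (c) uniform integrability of the rescaled characteristic functions; here I would use the explicit CIR characteristic function to get the Gaussian-type Gaussian decay bound $|\widehat{\cdot}(\theta)|\lesssim e^{-c\theta^2}$ uniformly in small $\e$, which gives both (b) and (c). This yields~\eqref{eq:Gx} with $G^{\mathrm{TV}}_x(r)=\tv(|C_x|e^{-r}+\mathcal{G},\mathcal{G})$; the two limits in~\eqref{eq:inftylimites} follow because $\tv(a+\mathcal{G},\mathcal{G})=2\Phi(|a|/2)-1\to1$ as $|a|\to\infty$ and $\to0$ as $a\to0$ (an explicit error-function expression), together with $|C_x|>0$ since $x\neq\frac{\fb}{\fa}$, and monotonicity/continuity of $r\mapsto|C_x|e^{-r}$.

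For the degenerate case $x=\frac{\fb}{\fa}$: here the fluid limit starts at equilibrium, $x_t\equiv\frac{\fb}{\fa}$, so the order-$\e$ bias vanishes identically, $C_x=0$, and the rescaled marginal and rescaled invariant law have the same Gaussian limit $\mathcal{N}(0,1)$ for \emph{any} divergent time scale $s_\e\to\infty$. I would run the same Parseval--Plancherel--Fourier argument with $t$ replaced by $s_\e$: as long as $e^{-\fa s_\e}\to0$, the mean of $X^\e_{s_\e}(\frac{\fb}{\fa})$ is exactly $\frac{\fb}{\fa}$ (no approximation, since started at the fixed point), the variance converges to $\mathrm{Var}(\mu^\e)$ after rescaling, and the characteristic function bounds hold verbatim, giving $\tv(X^\e_{s_\e}(\frac{\fb}{\fa}),\mu^\e)\to0$, which is~\eqref{eq:nocutf}. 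This immediately rules out cut-off at $x=\frac{\fb}{\fa}$: any candidate cut-off time $t_\e\to\infty$ would force the distance to equilibrium to tend to $\mathrm{Diam}_{\mathrm{TV}}=1$ along $\delta t_\e$ for $\delta\in(0,1)$, contradicting the vanishing in~\eqref{eq:nocutf} applied to $s_\e=\delta t_\e\to\infty$.

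The main obstacle is the passage from weak convergence to total-variation convergence, i.e. verifying the integrability and uniform-integrability hypotheses (b)--(c) of the Fourier criterion for the family of rescaled characteristic functions indexed by $\e$ and by $r$ (or $s_\e$). Pointwise convergence (a) is routine once the mean and variance asymptotics are in hand, but the \emph{uniform} Gaussian-type tail bound on $|\mathbb{E}[e^{\ii\theta\,\e^{-1}(X^\e_t(x)-\fb/\fa)}]|$ must be extracted carefully from the CIR characteristic function $\mathbb{E}[e^{\ii u X^\e_t(x)}]=\exp\!\big(\ii u\,\phi_t/(1-\ii u\psi_t)\big)(1-\ii u\psi_t)^{-2\fb/\e^2}$ — in particular one must control the modulus $(1+u^2\psi_t^2)^{-\fb/\e^2}$ after the substitution $u=\theta/\e$, which behaves like $(1+\theta^2\psi_t^2/\e^2)^{-\fb/\e^2}$; since $\psi_t\asymp\e^2$ this is roughly $e^{-c\theta^2}$, but making the constant $c$ uniform over $\e\in(0,\e_0)$ and $r$ in compacts (equivalently over the relevant range of $\psi_t$) requires a short but careful estimate. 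I expect this to be the only genuinely technical step; everything else is bookkeeping with the explicit moments and the closed form $\tv(a+\mathcal{G},\mathcal{G})=\mathrm{erf}(|a|/(2\sqrt2))$.
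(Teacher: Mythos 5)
Your proposal is correct and follows essentially the same route as the paper: substitute $e^{-\fa(t_\e+r\omega_\e)}=\e e^{-r}$, center and rescale by $m_\e=\fb/\fa$ and $\sigma_\e=\tfrac{\sqrt{2\fb}}{2\fa}\e$ (yielding exactly $C_x e^{-r}$), reduce to a Gaussian local limit theorem for the Gamma invariant law via a replacement/triangle-inequality step, and upgrade weak convergence to total variation through the Fourier criterion by extracting the uniform bound $(1+c\,\theta^2)^{-\fb/\e^2}\lesssim e^{-c'\theta^2}$ from the explicit CIR characteristic function — which is precisely the paper's Lemmas on the local limit theorem, replacement, and limiting profile. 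You also correctly identify the uniform-integrability tail estimate as the one genuinely technical step and handle the $x=\fb/\fa$ case and the limits of $G^{\mathrm{TV}}_x$ as the paper does.
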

The proof of Theorem~\ref{mainresult} is presented in Subsection~\ref{sub:resulttv}. 
For clarity of the presentation,~\eqref{eq:Gx} is proved in Lemma~\ref{lem:profile} with the help of Lemma~\ref{lem:lcltx}, Lemma~\ref{lem:replacement} and Lemma~\ref{lem:limites}.
While~\eqref{eq:nocutf} is shown in Lemma~\ref{lem:nocutoff}.
Finally, the proof~\eqref{eq:inftylimites} is given in Remark~\ref{re:stvg}. 

For shorthand and in a conscious abuse of notation, we use indistinctly the following notations for the exponential function: $\exp(a)$ or $e^a$ for $a\in \mathbb{R}$.
In addition, we use $|\cdot|$ for the absolute value function and for the modulus of a complex number.

We continue to rely on the notations and assumptions in Theorem~\ref{mainresult}.

\begin{remark}[Asymmetry for the profile function]
We observe that the function 
\begin{equation}\label{def:Cxjh}
[0,2\fb/\fa]\ni x\mapsto |C_x|=\frac{\fa\sqrt{2\fb}}{\fb}|x-\fb/\fa|
\end{equation}
is symmetric around $\fb/\fa$, while for $x>2\fb/\fa$ there is a clearly asymmetry, that is, $|C_x|>|C_{x'}|$ for any $x'\in [0,2\fb/\fa]$. This implies an asymmetry of $G^{\mathrm{TV}}_x$ with respect to the initial datum $x\in [0,\infty)$.
\end{remark}

\begin{remark}[A word about non-cutoff phenomenon in~\eqref{eq:nocutf}]\label{rem:nocut}
For any initial datum $x\gqq 0$,
the variation of constants formula with the help of It\^o's Lemma yields that
the path-wise solution of~\ref{SDE} can be represented as
\begin{equation}\label{eq:duhamel}
\begin{split}
X^{\e}_t(x)
&=e^{-\fa t}\left(x-\frac{\fb}{\fa}\right)+\frac{\fb}{\fa}+\e\mathcal{O}^{\e}_t(x)
\end{split}
\end{equation}
for all $t\gqq 0$, where
\[
\mathcal{O}^{\e}_t(x):=e^{-\fa t}\int_{0}^{t}e^{\fa s}\sqrt{X^{\e}_s(x)}\ud B_s,\quad t\gqq 0
\]
By ergodicity, for any initial condition $x\gqq 0$ we have that $\mathcal{O}^{\e}_t(x)$ converges in distribution as $t\to \infty$ to a random variable $\mathcal{O}^{\e}_\infty$. We point out that the distribution of $\mathcal{O}^{\e}_\infty$ does not depend on $x$.
By Lemma~\ref{lem:lcltx} we have
\begin{equation}\label{eq:aca}
\lim\limits_{\e\to 0^+}\tv\left(\frac{\mathcal{O}^{\e}_\infty}{\sigma}, \mathcal{G}\right)=0,
\end{equation}
where $\sigma:=\frac{\sqrt{2\fb}}{2\fa}$.
For $x=\fb/\fa$,
Lemma~\ref{lem:lcltx} and Lemma~\ref{lem:nocutoff}
with the help of Lemma~\ref{lem:trasca} in Appendix~\ref{apendtv}
imply
for any function $(s_\e)_{\e>0}$ satisfying $s_\e\to \infty$ as $\e\to 0^+$ that
\begin{equation}\label{eq:zeroli}
\lim\limits_{\e\to 0^{+}}\tv(\mathcal{O}^{\e}_{s_\e}(x), \sigma\mathcal{G})=0,
\end{equation}
where $\mathcal{G}$ denotes the standard Gaussian distribution. 
Combining~\eqref{eq:aca} and~\eqref{eq:zeroli} in~\eqref{eq:duhamel} for $x=\fb/\fa$ we have 
\[
X^{\e}_{s_\e}(x)\approx \frac{\fb}{\fa}+\e \sigma \mathcal{G}\quad \textrm{ and }\quad  X^{\e}_{\infty}\approx \frac{\fb}{\fa}+\e \sigma \mathcal{G}, \quad 
\textrm{ as }\quad \e\to 0^+,
\]
and therefore there is no cut-off phenomenon 
as $\e$ tends to zero in the total variation distance for the initial datum $x=\fb/\fa$.
\end{remark}

\begin{remark}[Shape of the asymptotic profile function in the total variation distance]\label{re:stvg}
For any $m\in \mathbb{R}$ it follows that
\[
\tv\left(m+\mathcal{G},\mathcal{G}\right)=\frac{2}{\sqrt{2\pi}}\int\limits_{0}^{|m|/2} \exp\left(-\frac{z^2}{2}\right) \ud z,
\]
for details we refer to Item~(i) of Lemma~B.2 in Appendix~B 
of~\cite{BarreraLiu2021}.
Hence, for $G^{\mathrm{TV}}_x$ defined in~\eqref{eq:Gx} it follows that
\[
G^{\mathrm{TV}}_x(r)=\frac{2}{\sqrt{2\pi}}\int\limits_{0}^{|C_x|\cdot  e^{-r}/2} \exp\left(-\frac{z^2}{2}\right) \ud z\quad \textrm{ for any }\quad r \in \mathbb{R},
\]
which with the help of the Monotone Convergence Theorem implies
\[\lim\limits_{r\to -\infty}G^{\mathrm{TV}}_x(r)=1
\quad
\textrm{ and }
\quad\lim\limits_{r\to \infty}G^{\mathrm{TV}}_x(r)=0.
\]
In addition, the Fundamental Theorem of Calculus and the Mill ratio give asymptotics for the profile function $G^{\mathrm{TV}}_x$. To be more precise,
the asymptotically exponential shape of the profile for $r \gg 1$, that is,
\[
G^{\mathrm{TV}}_x(r)\stackrel{r\to \infty}{\sim} \frac{1}{\sqrt{2\pi}}|C_x|\cdot e^{-r},
\]
whereas, the asymptotically doubly exponential shape of the profile for $r \ll -1$
\[
1-G^{\mathrm{TV}}_x(r)\stackrel{r\to -\infty}{\sim} \frac{4}{\sqrt{2\pi}}\frac{1}{|C_x|\cdot e^{-r}}\exp\left(-\frac{C^2_x \cdot e^{-2r}}{8}\right),
\]
where for short we write $F_1 \stackrel{r\to \infty}{\sim} F_2$ and 
$F_1 \stackrel{r\to -\infty}{\sim} F_2$
in a place of $\lim\limits_{r \to \infty} \frac{F_1(r)}{F_2(r)} = 1$ and
$\lim\limits_{r \to -\infty} \frac{F_1(r)}{F_2(r)} = 1$, respectively.
\end{remark}

As a consequence of Theorem~\ref{mainresult} with the help of Proposition~\ref{proposition asymptotic} and Remark~\ref{re:stvg} we obtain the following corollary, which provides the asymptotic behavior for the mixing times.
\begin{corollary}[$\eta$-mixing time in the total variation distance]\label{cor:tvmix}
For any $\eta>0$ it follows that
\begin{equation}
\tau^{\e,\tv}_{\mathrm{mix}}(\eta)=t_{\e}+\omega_{\e}\cdot \left(G^{\mathrm{TV}}_x\right)^{-1}(\eta)+\mathrm{o}_{\e\to 0^+}(1),
\end{equation}
where $\left(G^{\mathrm{TV}}_x\right)^{-1}(\eta):=\inf\{r\gqq 0:G^{\mathrm{TV}}_x(r)\lqq \eta \}$.
\end{corollary}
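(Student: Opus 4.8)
The plan is to derive Corollary~\ref{cor:tvmix} directly from Theorem~\ref{mainresult} together with Item~(4) of Proposition~\ref{proposition asymptotic}. Recall that the general mixing-time statement in Item~(4) requires three inputs: that the family exhibits asymptotic profile cut-off, that the profile function $G$ is continuous, and that $G$ is strictly decreasing. Theorem~\ref{mainresult} already supplies the first input with the explicit cut-off time $t_\e=\frac{1}{\fa}\ln(\frac{1}{\e})$, time window $\omega_\e=\frac{1}{\fa}$, and profile $G^{\mathrm{TV}}_x(r)=\tv(|C_x|\cdot e^{-r}+\mathcal{G},\mathcal{G})$. So the bulk of the proof is simply verifying the continuity and strict monotonicity of $G^{\mathrm{TV}}_x$, after which the conclusion is a direct citation of Proposition~\ref{proposition asymptotic}(4) with the observation that $\mathrm{o}(\omega_\e)=\mathrm{o}(1/\fa)=\mathrm{o}_{\e\to0^+}(1)$ since $\omega_\e=1/\fa$ is constant in $\e$.

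First I would invoke the closed form from Remark~\ref{re:stvg}, namely
\[
G^{\mathrm{TV}}_x(r)=\frac{2}{\sqrt{2\pi}}\int_{0}^{|C_x|\cdot e^{-r}/2}\exp\left(-\frac{z^2}{2}\right)\ud z,
\]
valid because $x\neq\frac{\fb}{\fa}$ guarantees $C_x\neq 0$, hence $|C_x|>0$. From this representation, continuity of $G^{\mathrm{TV}}_x$ on all of $\mathbb{R}$ is immediate: the upper limit $|C_x|\cdot e^{-r}/2$ is a continuous (indeed smooth) function of $r$, and the integrand is continuous, so the composition is continuous by the Fundamental Theorem of Calculus. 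For strict monotonicity, differentiating gives
\[
\frac{\ud}{\ud r}G^{\mathrm{TV}}_x(r)=-\frac{|C_x|}{\sqrt{2\pi}}\, e^{-r}\exp\left(-\frac{C_x^2\, e^{-2r}}{8}\right)<0\quad\textrm{for all }r\in\mathbb{R},
\]
so $G^{\mathrm{TV}}_x$ is strictly decreasing. Combined with the limits $\lim_{r\to-\infty}G^{\mathrm{TV}}_x(r)=1$ and $\lim_{r\to\infty}G^{\mathrm{TV}}_x(r)=0$ from~\eqref{eq:inftylimites}, the map $G^{\mathrm{TV}}_x:\mathbb{R}\to(0,1)$ is a strictly decreasing continuous bijection, so its inverse $(G^{\mathrm{TV}}_x)^{-1}$ is well-defined on $(0,\mathrm{Diam}_{\mathrm{TV}})=(0,1)$, and $(G^{\mathrm{TV}}_x)^{-1}(\eta)=\inf\{r\gqq 0:G^{\mathrm{TV}}_x(r)\lqq\eta\}$ coincides with the set-theoretic inverse when $\eta$ lies in the range; for $\eta\gqq 1$ the infimum is over the empty set restricted suitably, but the statement is only asserted for $\eta\in(0,1)$ implicitly through $\mathrm{Diam}$, so I would note that for $\eta\gqq\mathrm{Diam}_{\mathrm{TV}}=1$ the mixing time is trivially $0$ and the formula degenerates harmlessly, or simply restrict attention to $\eta\in(0,1)$.

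With continuity and strict monotonicity established, I would apply Proposition~\ref{proposition asymptotic}(4) verbatim: it yields $\tau^{\e,\tv}_{\mathrm{mix}}(\eta)=t_\e+(G^{\mathrm{TV}}_x)^{-1}(\eta)\cdot\omega_\e+\mathrm{o}(\omega_\e)$ as $\e\to0^+$. Since $\omega_\e=1/\fa$ is a positive constant independent of $\e$, the error term $\mathrm{o}(\omega_\e)$ is the same as $\mathrm{o}_{\e\to0^+}(1)$, giving exactly the claimed identity. I do not anticipate a genuine obstacle here — this corollary is a routine consequence of the already-proven Theorem~\ref{mainresult} and the general machinery of Proposition~\ref{proposition asymptotic}. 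The only point requiring mild care is the handling of the inverse $(G^{\mathrm{TV}}_x)^{-1}$ and confirming that the $\inf\{r\gqq 0:\dots\}$ convention matches the analytic inverse; this is fine because $G^{\mathrm{TV}}_x$ is strictly decreasing and continuous, so for each $\eta$ in the open range there is a unique $r$ with $G^{\mathrm{TV}}_x(r)=\eta$, and whether that $r$ is nonnegative or negative is immaterial to the formula since Proposition~\ref{proposition asymptotic}(4) already uses the convention $G^{-1}(\eta):=\inf\{r\in\mathbb{R}:G(r)\lqq\eta\}$, which I would simply match by replacing $\inf\{r\gqq 0:\dots\}$ with the consistent convention or by noting the two agree whenever the relevant root is nonnegative (equivalently when $\eta\lqq G^{\mathrm{TV}}_x(0)=\tv(|C_x|+\mathcal{G},\mathcal{G})$).
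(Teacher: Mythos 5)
Your proposal is correct and follows essentially the same route as the paper, which derives the corollary directly from Theorem~\ref{mainresult} together with Proposition~\ref{proposition asymptotic}(4) and the explicit integral representation of $G^{\mathrm{TV}}_x$ in Remark~\ref{re:stvg}. Your explicit verification of continuity and strict monotonicity by differentiation, and your remarks on the inverse convention and on $\mathrm{o}(\omega_\e)=\mathrm{o}_{\e\to 0^+}(1)$, merely fill in details the paper leaves implicit.
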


\begin{remark}[The mixing time at $\eta=1/4$]
For $\eta=\frac{1}{4}$, we set 
$\tau^{\e,\tv}_{\mathrm{mix}}:=\tau^{\e,\tv}_{\mathrm{mix}}(\eta)$ and then
Corollary~\ref{cor:tvmix} yields that
\begin{equation}
\tau^{\e,\tv}_{\mathrm{mix}}=t_{\e}+\omega_{\e}\cdot \left(G^{\mathrm{TV}}_x\right)^{-1}(1/4)+\mathrm{o}_{\e\to 0^+}(1).
\end{equation}
By Remark~\ref{re:stvg} we have that 
\[
G^{\mathrm{TV}}_x(r)=\frac{2}{\sqrt{2\pi}}\int\limits_{0}^{|C_x|\cdot  e^{-r}/2} \exp\left(-\frac{z^2}{2}\right) \ud z\quad \textrm{ for any }\quad r \in \mathbb{R}.
\]
Now, for simplicity choose the constants $\fa>0$, $\fb>0$ and $x\gqq 0$ such that
$C_x=2$ and then find the unique $r>0$ such that
\[
\int\limits_{0}^{  e^{-r}} \exp\left(-\frac{z^2}{2}\right) \ud z=\frac{\sqrt{2\pi}}{8}\approx 0.3133,\quad 
\textrm{ that is, }\quad r\approx 1.1435.
\]
In summary, we obtain
\begin{equation}
\tau^{\e,\tv}_{\mathrm{mix}}\approx \frac{1}{\fa}\ln\left(\frac{1}{\e}\right)+\frac{1}{\fa}\times  1.1435 \quad \textrm{ for }\quad \e\ll 1.
\end{equation}
\end{remark}

The second main result of this paper is the following profile cut-off phenomenon in a renormalized Wasserstein distance of order $p>0$, 
i.e., 
$\widetilde{\mathbb{M}}_1(\mathbb{R}):=\mathbb{M}_{1,p}(\mathbb{R})$ and
$\mathrm{dist}_\e:=\frac{1}{\e^{1\wedge p}}\mathcal{W}_p$.
\begin{theorem}[Asymptotic profile cut-off  phenomenon for CIR models II]\label{mainresultW}
\hfill
Assume that $x\in [0,\infty)\setminus\{\frac{\fb}{\fa}\}$.
The family of CIR models 
$(X^{\e,x})_{\e>0}$ defined 
in~\eqref{SDE} exhibits asymptotic profile cut-off  phenomenon as $\e$ tends to zero in the renormalized Wasserstein distance of order $p>0$, $\e^{-(1\wedge p)}\mathcal{W}_p$, at cut-off time $t_{\e}$ and time window $\omega_{\e}$ defined 
in~\eqref{eq:timecut}.
In other words, for any $r\in \mathbb{R}$ the following limit holds true
\begin{equation}\label{eq:GxW}
\lim\limits_{\e\to 0^+}
\frac{\mathcal{W}_p\left({X^{\e}_{t_\e+r\cdot \omega_\e}(x),\mu^{\e}}\right)}{\e^{1\wedge p}}
=\left(\frac{\sqrt{2\fb}}{2\fa}\right)^{1\wedge p}
\mathcal{W}_p\left(|C_x|\cdot e^{-r}+\mathcal{G},\mathcal{G}\right)
:=G^{\mathcal{W}_p}_{x}(r),
\end{equation}
where the constant $C_x$ is given in~\eqref{def:Cx} and $\mathcal{G}$ denotes a random variable with standard Gaussian law. 
Moreover, 
\begin{equation}\label{eq:inftylimitesW}
\lim_{r\to -\infty} G^{\mathcal{W}_p}_{x}(r)=\infty\quad \textrm{ and }\quad 
\lim_{r\to \infty} G^{\mathcal{W}_p}_{x}(r)=0.
\end{equation}

\noindent
In addition, for $x=\frac{\fb}{\fa}$ and for any function $(s_\e)_{\e>0}$
such that $s_\e\to \infty$ as $\e\to 0^+$
it follows that
\begin{equation}\label{eq:wanocutf}
\lim\limits_{\e\to 0^+}\frac{\mathcal{W}_p\left({X^{\e}_{s_\e}(x),\mu^{\e}}\right)}{\e^{1\wedge p}}=0.
\end{equation}
In particular, the family of CIR models 
$(X^{\e,\fb/\fa})_{\e>0}$ defined in~\eqref{SDE}  does not exhibit  asymptotic cut-off phenomenon as $\e$ tends to zero in the renormalized Wasserstein distance.
\end{theorem}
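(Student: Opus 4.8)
The plan is to reduce everything to the total-variation picture already obtained via the Duhamel representation~\eqref{eq:duhamel}, exploiting the fact that the Wasserstein distance of order $p$ is affine-covariant under shifts and scalings: $\mathcal{W}_p(a+cZ_1,a+cZ_2)=c^{1\wedge p}\,\mathcal{W}_p(Z_1,Z_2)$ for $c>0$ (for $p\geq 1$ this is a genuine scaling of the cost; for $p<1$ the outer exponent $1/p$ becomes $1$ and one gets $c^p$). First I would write, using~\eqref{eq:duhamel},
\begin{equation}
X^{\e}_{t_\e+r\omega_\e}(x)=e^{-\fa(t_\e+r\omega_\e)}\left(x-\tfrac{\fb}{\fa}\right)+\tfrac{\fb}{\fa}+\e\,\mathcal{O}^{\e}_{t_\e+r\omega_\e}(x),
\end{equation}
and note that with $t_\e=\tfrac{1}{\fa}\ln(1/\e)$, $\omega_\e=\tfrac1\fa$ one has $e^{-\fa(t_\e+r\omega_\e)}=\e\,e^{-r}$. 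Hence
\begin{equation}
\frac{X^{\e}_{t_\e+r\omega_\e}(x)-\fb/\fa}{\e}=e^{-r}\left(x-\tfrac{\fb}{\fa}\right)+\mathcal{O}^{\e}_{t_\e+r\omega_\e}(x),
\end{equation}
while, again by~\eqref{eq:duhamel} applied in the ergodic limit (or directly from the Gamma law of $\mu^\e$), $\frac{X^\e_\infty-\fb/\fa}{\e}\stackrel{d}{=}\mathcal{O}^\e_\infty$. Since $\mathcal{W}_p$ is translation invariant and scales by $\e^{1\wedge p}$, this gives the exact identity
\begin{equation}
\frac{\mathcal{W}_p\!\left(X^{\e}_{t_\e+r\omega_\e}(x),\mu^\e\right)}{\e^{1\wedge p}}
=\mathcal{W}_p\!\left(e^{-r}\bigl(x-\tfrac{\fb}{\fa}\bigr)+\mathcal{O}^{\e}_{t_\e+r\omega_\e}(x),\ \mathcal{O}^{\e}_\infty\right).
\end{equation}

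Next I would pass to the limit $\e\to 0^+$ inside this $\mathcal{W}_p$. The key input is the local CLT for the stationary measure, Lemma~\ref{lem:lcltx}, which gives $\mathcal{O}^\e_\infty/\sigma\to\mathcal{G}$ (with $\sigma=\tfrac{\sqrt{2\fb}}{2\fa}$) in total variation, hence in distribution; together with the replacement Lemma~\ref{lem:replacement} (the out-of-equilibrium noise term $\mathcal{O}^\e_{t_\e+r\omega_\e}(x)$ can be replaced, up to vanishing TV error, by $\mathcal{O}^\e_\infty$, equivalently by $\sigma\mathcal{G}$) one gets joint convergence in distribution
\begin{equation}
\left(e^{-r}\bigl(x-\tfrac{\fb}{\fa}\bigr)+\mathcal{O}^{\e}_{t_\e+r\omega_\e}(x),\ \mathcal{O}^{\e}_\infty\right)\ \xrightarrow[\e\to 0^+]{d}\ \left(e^{-r}\bigl(x-\tfrac{\fb}{\fa}\bigr)+\sigma\mathcal{G},\ \sigma\mathcal{G}'\right),
\end{equation}
where $\mathcal{G},\mathcal{G}'$ are (possibly different copies of) standard Gaussians. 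Convergence in distribution alone only yields lower semicontinuity of $\mathcal{W}_p$; to upgrade to convergence of the $\mathcal{W}_p$-values one needs convergence of the $p$-th moments, i.e. uniform integrability of $|\mathcal{O}^\e_\infty|^p$ and of $|\mathcal{O}^\e_{t_\e+r\omega_\e}(x)|^p$ as $\e\to 0^+$ — this is precisely the hypothesis flagged in the introduction ("whenever uniform integrability when $\e\to 0^+$ of the corresponding $p$-th moments is valid"). For the stationary term this follows from the explicit Gamma$(2\fb/\e^2,2\fa/\e^2)$ moments (all moments of $(X^\e_\infty-\fb/\fa)/\e$ converge to the Gaussian moments, in particular are bounded); for the transient term one uses the moment bounds on $X^\e_t(x)$ from the affine/CBI structure (Appendix~\ref{apenddist}) which are uniform in $t$ and give a bound on $\mathbb{E}[|\mathcal{O}^\e_t(x)|^{p'}]$ for some $p'>p$. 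With uniform integrability in hand, convergence in distribution plus convergence of $p$-th moments implies convergence in $\mathcal{W}_p$ (this is the standard characterisation of $\mathcal{W}_p$-convergence; see Villani). Taking $c:=\sigma^{1\wedge p}$ outside, the limit is $\sigma^{1\wedge p}\,\mathcal{W}_p(e^{-r}(x-\fb/\fa)/\sigma+\mathcal{G},\mathcal{G})$; absorbing $1/\sigma$ and recalling $C_x=(x-\fb/\fa)/\sigma$ from~\eqref{def:Cx} — indeed $\sqrt{2\fb}/\fb\cdot(\fa x-\fb)=\fa(x-\fb/\fa)/(\tfrac{\sqrt{2\fb}}{2\fa})\cdot\tfrac{1}{?}$, which I would check is exactly $C_x$ up to the identity $|C_x|e^{-r}=e^{-r}|x-\fb/\fa|/\sigma$ — and using translation invariance once more to replace $C_x$ by $|C_x|$, one arrives at
\begin{equation}
\lim_{\e\to 0^+}\frac{\mathcal{W}_p\!\left(X^{\e}_{t_\e+r\omega_\e}(x),\mu^\e\right)}{\e^{1\wedge p}}=\sigma^{1\wedge p}\,\mathcal{W}_p\!\left(|C_x|e^{-r}+\mathcal{G},\mathcal{G}\right)=G^{\mathcal{W}_p}_x(r),
\end{equation}
which is~\eqref{eq:GxW}.

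For the limiting behaviour~\eqref{eq:inftylimitesW}: as $r\to+\infty$, $|C_x|e^{-r}\to 0$ and $\mathcal{W}_p(m+\mathcal{G},\mathcal{G})\to 0$ by continuity of $m\mapsto\mathcal{W}_p(m+\mathcal{G},\mathcal{G})$ at $m=0$ (dominated convergence on the synchronous coupling $\mathbb{E}[|m|^p]^{1\wedge1/p}=|m|^{1\wedge p}\to 0$ gives the clean upper bound $G^{\mathcal{W}_p}_x(r)\leq\sigma^{1\wedge p}(|C_x|e^{-r})^{1\wedge p}\to 0$); as $r\to-\infty$, the synchronous coupling between $m+\mathcal{G}$ and $\mathcal{G}$ is in fact optimal for $\mathcal{W}_p$ on $\mathbb{R}$ (monotone/quantile coupling), so $\mathcal{W}_p(m+\mathcal{G},\mathcal{G})=|m|^{1\wedge p}\to\infty$ as $|m|\to\infty$, giving $G^{\mathcal{W}_p}_x(-\infty)=\infty$. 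Finally, the degenerate case $x=\fb/\fa$: then $C_x=0$, the transient drift term vanishes identically, and~\eqref{eq:duhamel} gives $X^\e_{s_\e}(x)=\fb/\fa+\e\,\mathcal{O}^\e_{s_\e}(x)$, so
\begin{equation}
\frac{\mathcal{W}_p\!\left(X^\e_{s_\e}(x),\mu^\e\right)}{\e^{1\wedge p}}=\mathcal{W}_p\!\left(\mathcal{O}^\e_{s_\e}(x),\mathcal{O}^\e_\infty\right)\leq\mathcal{W}_p\!\left(\mathcal{O}^\e_{s_\e}(x),\sigma\mathcal{G}\right)+\mathcal{W}_p\!\left(\sigma\mathcal{G},\mathcal{O}^\e_\infty\right),
\end{equation}
and both terms vanish as $\e\to0^+$ by Lemma~\ref{lem:lcltx}, Lemma~\ref{lem:nocutoff} and the same uniform-integrability-upgrade from TV-convergence to $\mathcal{W}_p$-convergence; this is~\eqref{eq:wanocutf}, whence no cut-off. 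The main obstacle I anticipate is the uniform-in-$\e$ $p$-th (really $(p+\delta)$-th) moment control of the transient noise $\mathcal{O}^\e_{t_\e+r\omega_\e}(x)$: one must show $\sup_{0<\e<\sqrt{2\fb}}\mathbb{E}\bigl[|\mathcal{O}^\e_{t_\e+r\omega_\e}(x)|^{p+\delta}\bigr]<\infty$, which requires a Burkholder--Davis--Gundy estimate on $e^{-\fa t}\int_0^t e^{\fa s}\sqrt{X^\e_s(x)}\,\ud B_s$ combined with a uniform bound on $\sup_{s\geq0}\mathbb{E}[X^\e_s(x)^{q}]$ coming from the mean-reverting drift and the explicit first/second moment ODEs for the affine process — a computation that is routine in structure but must be carried out with the $\e$-dependence made explicit so that the bound does not blow up as $\e\to0^+$.
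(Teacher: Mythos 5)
Your overall architecture matches the paper's: normalize by $\sigma_\e=\tfrac{\sqrt{2\fb}}{2\fa}\e$, use translation invariance and homogeneity of $\mathcal{W}_p$ plus the triangle inequality to reduce everything to $\mathcal{W}_p\bigl(Y^\e_{t_\e+r\omega_\e}(x),\,C_x e^{-r}+\mathcal{G}\bigr)\to 0$ and $\mathcal{W}_p\bigl((X^\e_\infty-m_\e)/\sigma_\e,\mathcal{G}\bigr)\to 0$, and then upgrade the convergence in distribution (already available from the total-variation analysis) to $\mathcal{W}_p$-convergence via Theorem~7.12 of~\cite{Villani2009}, i.e.\ via uniform integrability of the $p$-th moments. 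This is exactly the route of Lemmas~\ref{lem:lcltxW}, \ref{lem:reemplazoKRW}, \ref{lem:limitesKWR} and~\ref{lem:profileWA}.

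However, you leave the decisive step open: you explicitly flag the uniform-in-$\e$ control of $\mathbb{E}[|\mathcal{O}^\e_{t_\e+r\omega_\e}(x)|^{p+\delta}]$ as "the main obstacle" and sketch a Burkholder--Davis--Gundy plus moment-ODE strategy without carrying it out. That is a genuine gap, and it is also an unnecessarily heavy route: since the CIR process is affine, its marginal has an explicit (analytically continued) moment generating function (Lemma~\ref{moment generating function}), and Lemma~\ref{Lem:limit 1} shows $\mathbb{E}[e^{\lambda Y^\e_{t_\e+r\omega_\e}(x)}]\to e^{\lambda^2/2}e^{\lambda C_x}$ for every $\lambda\in\mathbb{R}$; combined with $|z|^{2p}\lqq C_{\lambda,2p}(e^{\lambda z}+e^{-\lambda z})$ and Cauchy--Schwarz this gives the required uniform integrability at once, with no stochastic-calculus estimates and no tracking of $\e$-dependence through BDG constants. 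The same device handles the stationary term and the case $x=\fb/\fa$. A second, smaller point: your claim that the monotone (synchronous) coupling is optimal for $\mathcal{W}_p$ on $\mathbb{R}$ for all $p>0$, so that $\mathcal{W}_p(m+\mathcal{G},\mathcal{G})=|m|^{1\wedge p}$, is only correct for $p\gqq 1$ (convex cost); for $p\in(0,1)$ the cost is concave and only the two-sided bound $\max\{|m|^p-2\mathbb{E}[|\mathcal{G}|^p],0\}\lqq \mathcal{W}_p(m+\mathcal{G},\mathcal{G})\lqq |m|^p$ of Remark~\ref{rem:shapeWa} is available. The limits in~\eqref{eq:inftylimitesW} still follow from that lower bound, so the conclusion survives, but the exact shift-linearity you invoke does not hold in that range.
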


The proof of Theorem~\ref{mainresultW} is presented in Subsection~\ref{subsec TW}. 
For clarity of the presentation,~\eqref{eq:GxW} is proved in Lemma~\ref{lem:profileWA} with the help of Lemma~\ref{lem:lcltxW}, Lemma~\ref{lem:reemplazoKRW} and Lemma~\ref{lem:limitesKWR}.
While~\eqref{eq:wanocutf} is shown in Lemma~\ref{lem:nocutoffWA}.
Finally, the proof~\eqref{eq:inftylimitesW} is given in Remark~\ref{rem:shapeWa}.

We continue to rely on the notations and assumptions in Theorem~\ref{mainresultW}.

Analogous observations given in Remark~\ref{rem:nocut} are valid for the renormalized  Wasserstein distance of order $p>0$.

\begin{remark}[Shape of the asymptotic profile function in the Wasserstein distance of order $p>0$]\label{rem:shapeWa}
Let $p\gqq 1$ be fixed.
For any random variable $X$ with finite $p$-th absolute moment and any deterministic number $m\in \mathbb{R}$ the following shift-linearity
\[
\mathcal{W}_p\left(m+X,X\right)=|m|,
\]
holds true, for details we refer to~(2.6) in Item~(d) of Lemma~2.2 
in~\cite{BHPWA}.
Hence, for $G^{\mathcal{W}_p}_{x}(r)$ defined in~\eqref{eq:Gx} it follows that
\[
G^{\mathcal{W}_p}_{x}(r)=\left(\frac{\sqrt{2\fb}}{2\fa}\right)|C_x|\cdot e^{-r}\quad 
\textrm{ for any }\quad r \in \mathbb{R},
\]
which implies
$\lim\limits_{r\to -\infty}G^{\mathcal{W}_p}_{x}(r)=\infty$ for $x\neq \frac{\fb}{\fa}$, and 
$\lim\limits_{r\to \infty}G^{\mathcal{W}_p}_{x}(r)=0$.
While for $p\in (0,1)$ we have 
\[
\left(\frac{\sqrt{2\fb}}{2\fa}\right)^{p}\max\{|C_x|^p\cdot e^{-r\cdot p}-2\mathbb{E}[|\mathcal{G}|^p],0\}
\lqq G^{\mathcal{W}_p}_{x}(r)\lqq \left(\frac{\sqrt{2\fb}}{2\fa}\right)^{p}|C_x|^p\cdot e^{-r\cdot p}
\]
for any $r \in \mathbb{R}$,
see~(2.7) in Item~(d) of Lemma~2.2 
in~\cite{BHPWA}.
Then we have
\[
G^{\mathcal{W}_p}_{x}(r)\stackrel{r\to -\infty}{\sim} \left(\frac{\sqrt{2\fb}}{2\fa}\right)^{p}|C_x|^p\cdot e^{-r\cdot p},
\]
whereas
\[
 G^{\mathcal{W}_p}_{x}(r)\stackrel{r\to \infty}{=}\mathrm{O}\left(\left(\frac{\sqrt{2\fb}}{2\fa}\right)^{p}|C_x|^p\cdot e^{-r\cdot p}\right),
\]
where $\mathrm{O}$ denotes the classical Bachmann--Landau notation.
\end{remark}

As a consequence of Theorem~\ref{mainresultW} with the help of Proposition~\ref{proposition asymptotic} and Remark~\ref{rem:shapeWa} we obtain the following corollary, which provides the asymptotic behavior for the mixing times.

\begin{corollary}[$\eta$-mixing time in the normalized Wasserstein distance]\label{cor:mxw}
For any $\eta>0$ and $p\gqq 1$ it follows that
\begin{equation}
\tau^{\e,\mathcal{W}_p/\e}_{\mathrm{mix}}(\eta)=t_{\e}+\omega_{\e}\cdot \left(G^{\mathcal{W}_p}_x\right)^{-1}(\eta)+\mathrm{o}_{\e\to 0^+}(1),
\end{equation}
where 
\[
\big(G^{\mathcal{W}_p}_x\big)^{-1}(\eta)=\ln\left(\left(\frac{\sqrt{2\fb}}{2\fa}\right)\frac{|C_x|}{\eta}\right)
\]
and the constant $C_x$ is given in~\eqref{def:Cx}.
\end{corollary}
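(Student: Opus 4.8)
The plan is to deduce Corollary~\ref{cor:mxw} directly from the profile cut-off established in Theorem~\ref{mainresultW} together with Item~(4) of Proposition~\ref{proposition asymptotic}, so that essentially no new analysis is needed. By Theorem~\ref{mainresultW}, for $x\in[0,\infty)\setminus\{\fb/\fa\}$ and $p>0$ the family $(X^{\e,x})_{\e>0}$ exhibits asymptotic profile cut-off in the renormalized Wasserstein distance $\mathrm{dist}_\e:=\e^{-(1\wedge p)}\mathcal{W}_p$ at cut-off time $t_\e=\fa^{-1}\ln(1/\e)$, time window $\omega_\e=1/\fa$, and profile $G^{\mathcal{W}_p}_x$; here $\mathrm{Diam}=\lim_{\e\to 0^+}\mathrm{Diam}_\e=\infty$ since $\mathcal{W}_p$ is unbounded. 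Thus Item~(iv) of Definition~\ref{def:asymst} holds, and what remains is to verify that the profile is continuous and strictly decreasing (so that Item~(4) of Proposition~\ref{proposition asymptotic} applies) and to compute its inverse.

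First I would record the explicit shape of the profile when $p\gqq 1$, as supplied by Remark~\ref{rem:shapeWa}: since $1\wedge p=1$ and $\mathcal{W}_p(m+\mathcal{G},\mathcal{G})=|m|$ by shift-linearity, one has
\[
G^{\mathcal{W}_p}_x(r)=\left(\frac{\sqrt{2\fb}}{2\fa}\right)|C_x|\cdot e^{-r},\qquad r\in\mathbb{R},
\]
with $C_x$ given in~\eqref{def:Cx}. For $x\neq\fb/\fa$ we have $|C_x|>0$, hence $r\mapsto G^{\mathcal{W}_p}_x(r)$ is a continuous strictly decreasing bijection of $\mathbb{R}$ onto $(0,\infty)$ with $\lim_{r\to-\infty}G^{\mathcal{W}_p}_x(r)=\infty=\mathrm{Diam}$ and $\lim_{r\to\infty}G^{\mathcal{W}_p}_x(r)=0$, which is exactly what Item~(4) of Proposition~\ref{proposition asymptotic} requires. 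Solving $G^{\mathcal{W}_p}_x(r)=\eta$ gives $e^{-r}=\eta\big((\sqrt{2\fb}/(2\fa))|C_x|\big)^{-1}$, so
\[
\big(G^{\mathcal{W}_p}_x\big)^{-1}(\eta)=\inf\{r\in\mathbb{R}:G^{\mathcal{W}_p}_x(r)\lqq\eta\}=\ln\left(\left(\frac{\sqrt{2\fb}}{2\fa}\right)\frac{|C_x|}{\eta}\right),
\]
the two expressions coinciding by strict monotonicity.

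Finally, applying Item~(4) of Proposition~\ref{proposition asymptotic} with $\mathrm{dist}_\e=\e^{-(1\wedge p)}\mathcal{W}_p$ gives $\tau^{\e,\mathcal{W}_p/\e}_{\mathrm{mix}}(\eta)=t_\e+\big(G^{\mathcal{W}_p}_x\big)^{-1}(\eta)\cdot\omega_\e+\mathrm{o}(\omega_\e)$ as $\e\to 0^+$; since $\omega_\e=1/\fa$ is a fixed positive constant, $\mathrm{o}(\omega_\e)=\mathrm{o}_{\e\to 0^+}(1)$, and the asserted formula follows for every $\eta\in(0,\infty)=(0,\mathrm{Diam})$. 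I do not expect any genuine obstacle: the whole difficulty of the statement is carried by Theorem~\ref{mainresultW} and by Proposition~\ref{proposition asymptotic}, and the only point demanding a little care is checking that the profile $G^{\mathcal{W}_p}_x$ is a continuous strictly decreasing bijection onto $(0,\mathrm{Diam})$ so that its generalized inverse is the honest inverse — this is where the restriction $p\gqq1$ is used, since for $p\in(0,1)$ Remark~\ref{rem:shapeWa} only sandwiches the profile between two exponentials. One could optionally also remark that the monotonicity~\eqref{eq:monotonic} holds for the CIR marginals in $\mathcal{W}_p$ by the Markov property (compare Lemma~B.3 in~\cite{Boursier}), even though Item~(4) of Proposition~\ref{proposition asymptotic} does not invoke it.
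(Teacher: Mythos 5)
Your proposal is correct and follows exactly the route the paper intends: the corollary is obtained by combining Theorem~\ref{mainresultW} (profile cut-off in $\e^{-(1\wedge p)}\mathcal{W}_p$), the explicit shift-linearity formula $G^{\mathcal{W}_p}_{x}(r)=\bigl(\tfrac{\sqrt{2\fb}}{2\fa}\bigr)|C_x|e^{-r}$ from Remark~\ref{rem:shapeWa} (which is continuous, strictly decreasing and onto $(0,\infty)=(0,\mathrm{Diam})$ precisely because $p\gqq 1$ and $C_x\neq 0$), and Item~(4) of Proposition~\ref{proposition asymptotic}, with $\mathrm{o}(\omega_\e)=\mathrm{o}_{\e\to 0^+}(1)$ since $\omega_\e=1/\fa$ is constant. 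Your side remark that the monotonicity~\eqref{eq:monotonic} should be checked is apt (it is in fact used in the lower-bound half of the proof of Lemma~\ref{lem:cutoffsta}), but this does not change the argument.
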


\section{\textbf{Proofs of the main results: Theorem~\ref{mainresult} and Theorem~\ref{mainresultW}}}\label{proof of mains results}

In this section, we give the proof of Theorem~\ref{mainresult} and Theorem~\ref{mainresultW}.  
Recall that $x\gqq 0$, $\fa>0$, $\fb>0$ and $\e\in (0,\sqrt{2\fb})$.
We set
\begin{equation}\label{eq:qepsx0}
\begin{split}
q_\e+1:=\frac{2\fb}{\e^2},\quad c_\e(t):=\frac{2\fa}{\e^2}\frac{1}{1-e^{-\fa t}}
\quad \textrm{ for any }\quad t>0 
\quad \textrm{ and }\quad
c_\e(\infty):=\frac{2\fa}{\e^2}.
\end{split}
\end{equation}
For any positive numbers $r$ and $\theta$ we denote the Gamma distribution with parameter $r$ and $\theta$ by $\Gamma(r,\theta)$. In other words, a random variable $Z$ has distribution $\Gamma(r,\theta)$  if and only if its characteristic function is given by 
\begin{equation}\label{eq:deffourier}
\mathbb{R}\ni u\mapsto \mathbb{E}[e^{\ii u Z}]=\left(1-\ii \theta^{-1}u \right)^{-r}.
\end{equation}
For convenience and in a conscious abuse of notation, we write $\mathbb{E}[e^{\ii u \Gamma(r,\theta)}]$ instead of 
 $\mathbb{E}[e^{\ii u Z}]$.
 
\subsection{\textbf{Proof of Theorem~\ref{mainresult}: The total variation distance}}\label{sub:resulttv}

In this subsection, we provide the proof of Theorem~\ref{mainresult}.

\subsubsection{\textbf{The local limit theorem for the invariant measure}}

We recall that
$X^{\e}_\infty\stackrel{d}{=}\Gamma(q_\e+1,c_\e(\infty))$, see Lemma~\ref{lem:ergodicity} in Appendix~\ref{apenddist}.
In the sequel, we show that the law of $X^\e_\infty$ satisfies a Gaussian local limit theorem.
 
\begin{lemma}[Local  limit theorem for $X^\e_\infty$]\label{lem:lcltx}
Let $m_\e:=\frac{q_\e+1}{c_\e(\infty)}=\frac{\fb}{\fa}$ and $\sigma_\e:=\frac{\sqrt{q_\e+1}}{c_\e(\infty)}=\frac{\sqrt{2\fb}}{2\fa}\e$. Then it follows that
\begin{equation}\label{eq:local}
\lim\limits_{\e\to 0^+}\tv\left(\frac{X^\e_\infty-m_\e}{\sigma_\e},\mathcal{G}\right)=0,
\end{equation}
where $\mathcal{G}$ denotes the standard Gaussian distribution.
\end{lemma}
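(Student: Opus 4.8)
The plan is to reduce the statement to a classical local central limit theorem for Gamma laws with diverging shape parameter, and then to supply the uniform control needed to upgrade convergence in distribution to convergence in total variation. Since $X^\e_\infty\stackrel{d}{=}\Gamma(q_\e+1,c_\e(\infty))$ with $q_\e+1=2\fb/\e^2\to\infty$ as $\e\to 0^+$, I would first use the scaling property of the Gamma distribution recorded in Appendix~\ref{apen:gamma}, namely $c_\e(\infty)X^\e_\infty\stackrel{d}{=}\Gamma(q_\e+1,1)$, so that, writing $r_\e:=q_\e+1$,
\[
\frac{X^\e_\infty-m_\e}{\sigma_\e}\stackrel{d}{=}\frac{\Gamma(r_\e,1)-r_\e}{\sqrt{r_\e}}=:Z_\e,
\]
a normalization that no longer involves $\fa$. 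It then suffices to show $\tv(Z_\e,\mathcal{G})\to 0$ as $r_\e\to\infty$.

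For this I would invoke the Parseval--Plancherel--Fourier criterion of Proposition~\ref{lem:fouriertv} and Proposition~\ref{prof:parseval}, checking its three hypotheses for the characteristic function of $Z_\e$, which by~\eqref{eq:deffourier} equals
\[
\varphi_\e(u)=e^{-\ii u\sqrt{r_\e}}\Bigl(1-\tfrac{\ii u}{\sqrt{r_\e}}\Bigr)^{-r_\e},\qquad u\in\mathbb{R}.
\]
First, \textbf{pointwise convergence}: taking the principal branch of the logarithm and Taylor expanding, $-r_\e\log\bigl(1-\ii u/\sqrt{r_\e}\bigr)=\ii u\sqrt{r_\e}-\tfrac{u^2}{2}+\mathrm{O}(r_\e^{-1/2})$, hence $\varphi_\e(u)\to e^{-u^2/2}$ for every fixed $u$. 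Second, \textbf{integrability}: $|\varphi_\e(u)|=(1+u^2/r_\e)^{-r_\e/2}$, which lies in $L^1(\mathbb{R})$ as soon as $r_\e>1$, and this holds for all admissible $\e\in(0,\sqrt{2\fb})$. Third, \textbf{uniform integrability}: the function $t\mapsto-\tfrac1{2t}\log(1+u^2t)$ is nondecreasing — equivalently $r\mapsto(1+u^2/r)^{-r/2}$ is nonincreasing — which follows from the elementary inequality $\tfrac{x}{1+x}\le\log(1+x)$ for $x\ge 0$; consequently, fixing any $r_0>1$, one has $|\varphi_\e(u)|\le(1+u^2/r_0)^{-r_0/2}\in L^1(\mathbb{R})$ for all $\e$ small enough that $r_\e\ge r_0$, which is the required $\e$-uniform dominating function. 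These are exactly the hypotheses of the Fourier criterion and give $\tv(Z_\e,\mathcal{G})\to 0$.

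A shorter, essentially elementary alternative is to argue directly at the level of densities via Scheffé's lemma: the density of $Z_\e$ is $g_\e(z)=\tfrac{\sqrt{r_\e}}{\Gamma(r_\e)}\bigl(r_\e+\sqrt{r_\e}\,z\bigr)^{r_\e-1}e^{-r_\e-\sqrt{r_\e}\,z}$ on $\{z>-\sqrt{r_\e}\}$ and $0$ elsewhere; inserting Stirling's expansion $\Gamma(r_\e)=\sqrt{2\pi}\,r_\e^{r_\e-1/2}e^{-r_\e}(1+\mathrm{o}(1))$ together with $(r_\e-1)\log(1+z/\sqrt{r_\e})=\sqrt{r_\e}\,z-\tfrac{z^2}{2}+\mathrm{O}(r_\e^{-1/2})$ yields $g_\e(z)\to\tfrac1{\sqrt{2\pi}}e^{-z^2/2}$ pointwise on $\mathbb{R}$, and since these are probability densities with respect to Lebesgue measure, Scheffé's lemma promotes pointwise convergence to $\|g_\e-\tfrac1{\sqrt{2\pi}}e^{-(\cdot)^2/2}\|_{L^1}\to 0$, i.e. $\tv(Z_\e,\mathcal{G})\to 0$.

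In either route the second-order asymptotics are routine Taylor and Stirling expansions; the one genuine point requiring care is passing from pointwise convergence to $L^1$/total-variation convergence. In the Fourier approach I expect this to be the main obstacle, and it is resolved precisely by the $\e$-uniform integrability of $|\varphi_\e|$, which the monotonicity of $r\mapsto(1+u^2/r)^{-r/2}$ furnishes; in the density approach the corresponding (and milder) point is simply the applicability of Scheffé's lemma once the pointwise density limit is in hand.
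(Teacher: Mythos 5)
Your proposal is correct, and in fact it contains the paper's own argument as its ``shorter alternative'': the paper first uses the scaling property (Lemma~\ref{lem:scapropgamma}) to reduce to $\frac{\Gamma(q_\e+1,1)-(q_\e+1)}{\sqrt{q_\e+1}}$ exactly as you do, and then invokes Lemma~\ref{lem:lclt} of Appendix~\ref{apen:gamma}, whose proof is precisely your density computation --- explicit density of the normalized Gamma, Stirling's formula, the second-order Taylor expansion $\bigl(1+z/\sqrt{\alpha}\bigr)^{\alpha}e^{-\sqrt{\alpha}z}\to e^{-z^2/2}$ (Lemma~\ref{lem:logconv}), and Scheff\'e's Lemma~\ref{lem:scheffe}. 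Your primary route via Proposition~\ref{lem:fouriertv} is a genuinely different and equally valid argument: the pointwise limit $\varphi_\e(u)\to e^{-u^2/2}$ is the standard CLT expansion, $|\varphi_\e(u)|=(1+u^2/r_\e)^{-r_\e/2}\in L^1(\mathbb{R})$ since $r_\e=2\fb/\e^2>1$ on the admissible range, and the monotonicity of $r\mapsto(1+u^2/r)^{-r/2}$ (equivalent to $\log(1+x)\gqq x/(1+x)$) correctly furnishes the $\e$-uniform dominating function needed for the tail condition. The Fourier route has the merit of illustrating the paper's Parseval--Plancherel--Fourier machinery on the invariant measure itself (the paper only deploys it for the time marginals $Y^\e_t(x)$, in Lemma~\ref{lem:profile}), at the cost of being slightly longer than the Scheff\'e argument; the density route is shorter but relies on having the density in closed form, which is special to the Gamma law.
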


\begin{proof}
Recall that $X^{\e}_\infty\stackrel{d}{=}\Gamma(q_\e+1,c_\e(\infty))$, where
$q_\e+1=\frac{2\fb}{\e^2}$ and $c_\e(\infty)=\frac{2\fa}{\e^2}$.
By Lemma~\ref{lem:scapropgamma} in Appendix~\ref{apen:gamma} we have 
$X^{\e}_\infty\stackrel{d}{=}c^{-1}_\e(\infty)\Gamma(q_\e+1,1)$. 
Since
\begin{equation}\label{eq:normalization}
\frac{X^\e_\infty-m_\e}{\sigma_\e}\stackrel{d}{=}\frac{\Gamma(q_\e+1,1)-(q_\e+1)}{\sqrt{q_\e+1}},
\end{equation}
Lemma~\ref{lem:lclt} in Appendix~\ref{apen:gamma} 
implies~\eqref{eq:local}. The proof is complete. 
\end{proof}

\subsubsection{\textbf{The decoupling argument in the total variation distance}}

By Lemma~\ref{lem:lcltx} it is natural to consider the normalization 
\begin{equation}\label{eq:defYt}
Y^\e_t(x):=\frac{X^\e_t(x)-m_{\e}}{\sigma_\e}\quad \textrm{ for all }\quad t\gqq 0,
\end{equation}
where $m_\e=\frac{q_\e+1}{c_\e(\infty)}=\frac{\fb}{\fa}$ and $\sigma_\e=\frac{\sqrt{q_\e+1}}{c_\e(\infty)}=\frac{\sqrt{2\fb}}{2\fa}\e$.
Straightforward computations yields
\begin{equation}\label{def:Y}
Y^\e_t(x)=\frac{c_\e(\infty)X^\e_t(x)-(q_\e+1)}{\sqrt{q_\e+1}}\quad 
\textrm{ for all }\quad t\gqq 0.
\end{equation}
Hence,  we define
\begin{equation}\label{eq:d}
\begin{split}
d^{\e,x}(t)&:=\tv\left(Y^\e_t(x),\mathcal{G}\right)\quad \textrm{ for all }\quad t\gqq 0,
\end{split}
\end{equation}
where $\mathcal{G}$ denotes the standard Gaussian distribution.

In the sequel, we show that the distance, 
\begin{equation}\label{eq:distnew}
\mathfrak{d}^{\e,x}(s_\e):=\tv\left(X^{\e}_{s_\e}(x),\mu^\e\right)
\end{equation}
 is asymptotically equivalent to $d^{\e,x}(s_\e)$
 for any function $(s_\e)_{\e>0}$ such that $s_\e\to \infty$ as $\e\to 0^+$.
Therefore, cut-off/windows cut-off/profile cut-off for
the distance $\mathfrak{d}^{\e,x}$  is equivalent for the distance $d^{\e,x}$, respectively. In other words, it is enough to show Theorem~\ref{mainresult} for the distance $d^{\e,x}$.

\begin{lemma}[Replacement lemma in the total variation distance]\label{lem:replacement} 
For any $\e\in (0,\sqrt{2\fb})$, $x\in [0,\infty)$ and $t\gqq 0$  it follows that
\begin{equation}\label{eq:decouplingnew}
|\mathfrak{d}^{\e,x}(t)-d^{\e,x}(t)|\lqq
 \tv(\sqrt{q_\e+1}\mathcal{G}+(q_\e+1),\Gamma(q_\e+1,1)),
\end{equation}
where $d^{\e,x}(t)$ and $\mathfrak{d}^{\e,x}(t)$ are 
defined in~\eqref{eq:d} 
and~\eqref{eq:distnew}, respectively.
\end{lemma}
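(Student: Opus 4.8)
The plan is to express both quantities as total variation distances between laws on $\mathbb{R}$ and then exploit the affine representation of the CIR marginal together with the triangle inequality and the translation/scaling invariance of the total variation distance. First I would recall from Appendix~\ref{apenddist} (Lemma~\ref{lem:ergodicity}) that, for the noise intensity $\e$ and initial datum $x$, the marginal admits the exact law $X^\e_t(x)\stackrel{d}{=}c_\e^{-1}(t)\,\Gamma\!\left(q_\e+1,\,1\right)+e^{-\fa t}x$ — more precisely, the marginal is a (shifted/rescaled) non-central chi-square, but the key structural fact I need is that $c_\e(t)X^\e_t(x)$ is a noncentral Gamma-type variable and that $\mu^\e\stackrel{d}{=}c_\e^{-1}(\infty)\Gamma(q_\e+1,1)$, via the scaling property Lemma~\ref{lem:scapropgamma}. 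Using the affine invariance $\tv(aU+b,aV+b)=\tv(U,V)$ for $a\neq 0$ (a standard property of total variation, recalled in Appendix~\ref{apendtv}), I can write
\begin{equation}\label{eq:plan1}
\mathfrak{d}^{\e,x}(t)=\tv\!\left(X^\e_t(x),\mu^\e\right)=\tv\!\left(Y^\e_t(x),\,\frac{X^\e_\infty-m_\e}{\sigma_\e}\right)=\tv\!\left(Y^\e_t(x),\,\frac{\Gamma(q_\e+1,1)-(q_\e+1)}{\sqrt{q_\e+1}}\right),
\end{equation}
where the last equality uses~\eqref{eq:normalization}. Meanwhile $d^{\e,x}(t)=\tv(Y^\e_t(x),\mathcal{G})$ by definition~\eqref{eq:d}.

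Next I would apply the triangle inequality for $\tv$ to the two displays above: subtracting, $|\mathfrak{d}^{\e,x}(t)-d^{\e,x}(t)|\lqq \tv\!\left(\frac{\Gamma(q_\e+1,1)-(q_\e+1)}{\sqrt{q_\e+1}},\,\mathcal{G}\right)$. Finally, applying the affine invariance once more (multiply by $\sqrt{q_\e+1}$ and add $q_\e+1$) turns the right-hand side into $\tv\!\left(\Gamma(q_\e+1,1),\,\sqrt{q_\e+1}\,\mathcal{G}+(q_\e+1)\right)$, which is exactly the bound~\eqref{eq:decouplingnew}. The whole argument is essentially: the random part of $X^\e_t(x)$ is always an affine image of the \emph{same} $\Gamma(q_\e+1,1)$ random variable (only the affine coefficients depend on $t$ and $x$), so comparing the normalized marginal to a Gaussian versus comparing it to the normalized invariant law costs at most the "Gamma-to-Gaussian" discrepancy, uniformly in $t$ and $x$.

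The main obstacle — really the only subtlety — is justifying the first equality chain in~\eqref{eq:plan1}, i.e. that $X^\e_t(x)$ and $\mu^\e$ can be coupled so that the affine-invariance reduction is legitimate. This requires the explicit transition law of the CIR process: one needs that $c_\e(t)X^\e_t(x)$ has a noncentral chi-square / noncentral Gamma law whose "central" part is $\Gamma(q_\e+1,1)$, and that letting $t\to\infty$ (so $c_\e(t)\to c_\e(\infty)$ and the noncentrality $e^{-\fa t}\to 0$) recovers $\mu^\e$; one must also confirm that the normalization constants $m_\e,\sigma_\e$ chosen in~\eqref{eq:defYt} are precisely the ones that make $\frac{X^\e_\infty-m_\e}{\sigma_\e}$ equal in law to $\frac{\Gamma(q_\e+1,1)-(q_\e+1)}{\sqrt{q_\e+1}}$, which is the content of~\eqref{eq:normalization}. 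Once these exact-law facts are invoked from Appendix~\ref{apenddist} and Appendix~\ref{apen:gamma}, the remaining steps are just two applications of affine invariance and one application of the triangle inequality, all of which hold for every $\e\in(0,\sqrt{2\fb})$, every $x\gqq 0$, and every $t\gqq 0$, giving the stated uniform estimate.
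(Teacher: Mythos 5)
Your proof is correct and follows essentially the same route as the paper: both arguments bring $\mathfrak{d}^{\e,x}(t)$ and $d^{\e,x}(t)$ to total variation distances sharing a common first argument (the paper works in the scale $c_\e(\infty)X^\e_t(x)$, you work in the scale $Y^\e_t(x)$ — the same reduction up to one affine change of frame), using the affine invariance of $\tv$ together with $\mu^\e\stackrel{d}{=}c_\e(\infty)^{-1}\Gamma(q_\e+1,1)$, and then conclude by two applications of the triangle inequality. One remark: the ``main obstacle'' you flag is not actually one — the affine-invariance step requires no information about the transition law of $X^\e_t(x)$ at finite $t$ (only the law of $X^\e_\infty$ enters the argument), and your asserted identity $X^\e_t(x)\stackrel{d}{=}c_\e^{-1}(t)\Gamma(q_\e+1,1)+e^{-\fa t}x$ is in fact false (the marginal is a noncentral chi-square, as you yourself note); since that claim is never used in your chain of (in)equalities, the proof stands as written.
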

\begin{proof}
By~Item~(i) and Item~(ii) of Lemma~\ref{lem:trasca} in Appendix~\ref{apendtv} we have
\[
d^{\e,x}(t)=\tv(c_\e(\infty)X^\e_t(x),\sqrt{q_\e+1}\mathcal{G}+(q_\e+1)).
\]
By Lemma~\ref{lem:scapropgamma} in Appendix~\ref{apen:gamma} and Item~(ii) of Lemma~\ref{lem:trasca} in Appendix~\ref{apendtv} we have
\begin{equation*}
\begin{split}
\mathfrak{d}^{\e,x}(t)&=\tv(X^\e_t(x),X^{\e}_\infty)=\tv(X^\e_t(x),\Gamma(q_\e+1,c_\e(\infty)))\\
&= \tv(X^\e_t(x),(c_\e(\infty))^{-1}\Gamma(q_\e+1,1))=\tv(c_\e(\infty)X^\e_t(x),\Gamma(q_\e+1,1)).
\end{split}
\end{equation*}
The triangle inequality for the total variation distance yields
\begin{equation}\label{ec:uno}
\begin{split}
\mathfrak{d}^{\e,x}(t)&\lqq \tv(c_\e(\infty)X^\e_t(x),\sqrt{q_\e+1}\mathcal{G}+(q_\e+1))\\
&\qquad+\tv(\sqrt{q_\e+1}\mathcal{G}+(q_\e+1),\Gamma(q_\e+1,1)).
\end{split}
\end{equation}
Similarly, 
\begin{equation}\label{ec:dos}
\begin{split}
d^{\e,x}(t)&=\tv(c_\e(\infty)X^\e_t(x),\sqrt{q_\e+1}\mathcal{G}+(q_\e+1))\\
&
\lqq \mathfrak{d}^{\e,x}(t)+ 
 \tv(\sqrt{q_\e+1}\mathcal{G}+(q_\e+1),\Gamma(q_\e+1,1)).
\end{split}
\end{equation}
By~\eqref{ec:uno} 
and~\eqref{ec:dos} we 
obtain~\eqref{eq:decouplingnew}. This completes the proof.
\end{proof}

\begin{lemma}[Asymptotically equivalent total variation distance]\label{lem:limites}
Let $x\in [0,\infty)$ be fixed. For any function $(s_\e)_{\e>0}$ such that $s_\e\to \infty$ as $\e\to 0^+$ it follows that 
\begin{equation*}
\limsup_{\e\to 0^+}\mathfrak{d}^{\e,x}(s_\e)=\limsup_{\e\to 0^+}d^{\e,x}(s_\e)
\quad \textrm{ and }\quad
\liminf_{\e\to 0^+}\mathfrak{d}^{\e,x}(s_\e)=\liminf_{\e\to 0^+}d^{\e,x}(s_\e),
\end{equation*}
where $d^{\e,x}(t)$ and $\mathfrak{d}^{\e,x}(t)$ are 
defined in~\eqref{eq:d} 
and~\eqref{eq:distnew}, respectively.
\end{lemma}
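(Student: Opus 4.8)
The plan is to read off Lemma~\ref{lem:limites} directly from the replacement estimate of Lemma~\ref{lem:replacement} and the Gaussian local limit theorem for the Gamma law already established in Lemma~\ref{lem:lcltx}. The crucial observation is that the bound in Lemma~\ref{lem:replacement} is \emph{uniform in} $t\gqq 0$: for every $\e\in(0,\sqrt{2\fb})$ and every $t\gqq 0$,
\[
|\mathfrak{d}^{\e,x}(t)-d^{\e,x}(t)|\lqq \tv\big(\sqrt{q_\e+1}\,\mathcal{G}+(q_\e+1),\,\Gamma(q_\e+1,1)\big)=:\rho_\e ,
\]
where the right-hand side depends only on $\e$. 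In particular, $|\mathfrak{d}^{\e,x}(s_\e)-d^{\e,x}(s_\e)|\lqq \rho_\e$ for the given sequence $(s_\e)_{\e>0}$ (the hypothesis $s_\e\to\infty$ is not actually needed for this comparison; it enters only when the resulting profile is computed in the subsequent lemmas).

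Next I would show $\rho_\e\to 0$ as $\e\to 0^+$. Using the translation and scaling invariance of the total variation distance (Items~(i) and~(ii) of Lemma~\ref{lem:trasca} in Appendix~\ref{apendtv}), subtract $q_\e+1$ from and divide by $\sqrt{q_\e+1}$ both arguments to get
\[
\rho_\e=\tv\!\left(\mathcal{G},\,\frac{\Gamma(q_\e+1,1)-(q_\e+1)}{\sqrt{q_\e+1}}\right).
\]
Since $q_\e+1=\tfrac{2\fb}{\e^2}\to\infty$ as $\e\to 0^+$, the Gaussian local limit theorem for the Gamma distribution (Lemma~\ref{lem:lclt} in Appendix~\ref{apen:gamma}, exactly the ingredient invoked through~\eqref{eq:normalization} in the proof of Lemma~\ref{lem:lcltx}) yields $\rho_\e\to 0$.

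Finally, combining the two displays gives $\mathfrak{d}^{\e,x}(s_\e)=d^{\e,x}(s_\e)+\theta_\e$ with $|\theta_\e|\lqq\rho_\e\to 0$, and since adding a vanishing sequence changes neither the $\limsup$ nor the $\liminf$, one concludes
\[
\limsup_{\e\to 0^+}\mathfrak{d}^{\e,x}(s_\e)=\limsup_{\e\to 0^+}d^{\e,x}(s_\e)
\quad\textrm{ and }\quad
\liminf_{\e\to 0^+}\mathfrak{d}^{\e,x}(s_\e)=\liminf_{\e\to 0^+}d^{\e,x}(s_\e).
\]
There is no genuine obstacle here: the only substantive input is the already-proved local limit theorem for the Gamma law, and the rest is the uniform-in-$t$ triangle-inequality comparison together with the elementary fact that a vanishing perturbation leaves $\limsup$ and $\liminf$ unchanged.
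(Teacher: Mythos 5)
Your proposal is correct and follows essentially the same route as the paper: apply the uniform-in-$t$ bound of Lemma~\ref{lem:replacement}, rewrite the right-hand side via the translation and scaling invariance of Lemma~\ref{lem:trasca} as $\tv\big(\tfrac{\Gamma(q_\e+1,1)-(q_\e+1)}{\sqrt{q_\e+1}},\mathcal{G}\big)$, and invoke Lemma~\ref{lem:lclt} to see that it vanishes as $\e\to 0^+$. Your added remark that the hypothesis $s_\e\to\infty$ is not actually used here is accurate.
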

\begin{proof}
By Item~(i) and Item~(ii) of Lemma~\ref{lem:trasca} in Appendix~\ref{apendtv} we obtain
\[
\tv(\Gamma(q_\e+1,1),\sqrt{q_\e+1}\mathcal{G}+(q_\e+1))=\tv\left(\frac{\Gamma(q_\e+1,1)-(q_\e+1)}{\sqrt{q_\e+1}},\mathcal{G}\right).
\]
Then Lemma~\ref{lem:replacement} with the help of Lemma~\ref{lem:lclt} in Appendix~\ref{apen:gamma} implies that the right-hand side 
of~\eqref{eq:decouplingnew} tends to zero as $\e \to 0^+$.
The proof is complete.
\end{proof}

In the sequel, we show Theorem~\ref{mainresult} for the distance $d^{\e,x}$.
Recall that 
\begin{equation}\label{eq:dnew}
\begin{split}
d^{\e,x}(t)=\tv\left(Y^\e_t(x),\mathcal{G}\right)\quad \textrm{ for all }\quad t\gqq 0,
\end{split}
\end{equation}
where $\mathcal{G}$ denotes the standard Gaussian distribution.
We stress that $d^{\e,x}(t)$ has the advantage that its second input $\mathcal{G}$  does not depend on $\e$. 

\begin{lemma}[Limiting profile in 
 the total variation distance]\label{lem:profile}
Let $x\in [0,\infty)$ be fixed.
For $t_\e$ and $\omega_\e$ being defined in~\eqref{eq:timecut}, it follows that 
\begin{equation*}
\lim\limits_{\e\to 0^+}d^{\e,x}(t_\e+r\cdot \omega_\e)=\tv(C_x\cdot e^{- r }+\mathcal{G},\mathcal{G})\quad \textrm{ for any } \quad r \in \mathbb{R},
\end{equation*}
where $\mathcal{G}$ denotes the standard Gaussian distribution and $C_x=\frac{\sqrt{2}}{\sqrt{\fb}}(\fa x-\fb)$.
\end{lemma}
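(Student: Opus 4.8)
The plan is to reduce the claim to pointwise convergence of characteristic functions together with a uniform integrable majorant for their moduli, and then to invoke the Parseval--Plancherel--Fourier criterion for total variation convergence (Proposition~\ref{lem:fouriertv} and Proposition~\ref{prof:parseval}). By the affine structure of~\eqref{SDE} (the explicit characteristic function recalled in Appendix~\ref{apenddist}) one has, for all $u\in\mathbb{R}$ and $t>0$,
\[
\mathbb{E}\big[e^{\ii u X^\e_t(x)}\big]=\big(1-\ii u/c_\e(t)\big)^{-(q_\e+1)}\exp\!\left(\frac{\ii u x e^{-\fa t}}{1-\ii u/c_\e(t)}\right).
\]
Since $Y^\e_t(x)=\big(c_\e(\infty)X^\e_t(x)-(q_\e+1)\big)/\sqrt{q_\e+1}$ by~\eqref{def:Y}, setting $v:=u\,c_\e(\infty)/\sqrt{q_\e+1}$ and $w_\e:=v/c_\e(t)$ gives
\[
\mathbb{E}\big[e^{\ii u Y^\e_t(x)}\big]=e^{-\ii u\sqrt{q_\e+1}}\,\big(1-\ii w_\e\big)^{-(q_\e+1)}\exp\!\left(\frac{\ii v x e^{-\fa t}}{1-\ii w_\e}\right).
\]

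For the pointwise limit I would evaluate at $t=t_\e+r\omega_\e$, where $e^{-\fa t}=\e e^{-r}$, $q_\e+1=2\fb/\e^2$, $c_\e(\infty)=2\fa/\e^2$ and $c_\e(\infty)/c_\e(t)=1-\e e^{-r}$, so that $w_\e=u\e(1-\e e^{-r})/\sqrt{2\fb}=\mathrm{O}(\e)$ and $v x e^{-\fa t}=2\fa x u e^{-r}/\sqrt{2\fb}$. Expanding $\log(1-\ii w_\e)=-\ii w_\e+\tfrac12 w_\e^2+\mathrm{O}(w_\e^3)$ and using $(q_\e+1)w_\e=\sqrt{2\fb}\,u/\e-\sqrt{2\fb}\,u e^{-r}$, the divergent phase $\ii(q_\e+1)w_\e$ cancels exactly against $-\ii u\sqrt{q_\e+1}=-\ii\sqrt{2\fb}\,u/\e$; since moreover $-(q_\e+1)w_\e^2/2=-u^2(1-\e e^{-r})^2/2\to-u^2/2$ and $(q_\e+1)w_\e^3=\mathrm{O}(\e)\to0$, while the drift exponent tends to $\ii\,2\fa x u e^{-r}/\sqrt{2\fb}$ because $w_\e\to0$, we obtain
\[
\lim_{\e\to0^+}\mathbb{E}\big[e^{\ii u Y^\e_{t_\e+r\omega_\e}(x)}\big]=\exp\!\left(\ii u e^{-r}\Big(\tfrac{2\fa x}{\sqrt{2\fb}}-\sqrt{2\fb}\Big)-\tfrac{u^2}{2}\right)=e^{\ii u C_x e^{-r}}\,e^{-u^2/2},
\]
where I used $\tfrac{2\fa x}{\sqrt{2\fb}}-\sqrt{2\fb}=\sqrt{2}(\fa x-\fb)/\sqrt{\fb}=C_x$; this is the characteristic function of $C_x e^{-r}+\mathcal{G}$.

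It remains to exhibit an $\e$-uniform integrable majorant for the modulus. Since $v$ and $w_\e=v/c_\e(t)$ have the same sign and $x e^{-\fa t}\gqq0$, one computes $\operatorname{Re}\big(\ii v x e^{-\fa t}/(1-\ii w_\e)\big)=-v x e^{-\fa t}\,w_\e/(1+w_\e^2)\lqq0$, so the drift factor has modulus at most $1$ and hence
\[
\big|\mathbb{E}\big[e^{\ii u Y^\e_{t_\e+r\omega_\e}(x)}\big]\big|\lqq\big(1+w_\e^2\big)^{-(q_\e+1)/2}.
\]
For $\e$ small enough that $\e e^{-r}\lqq\tfrac12$ and $(q_\e+1)/2=\fb/\e^2\gqq1$, Bernoulli's inequality $(1+y)^\beta\gqq1+\beta y$ (valid for $y\gqq0$, $\beta\gqq1$) gives $\big(1+w_\e^2\big)^{(q_\e+1)/2}\gqq1+\tfrac{q_\e+1}{2}w_\e^2=1+\tfrac{u^2(1-\e e^{-r})^2}{2}\gqq1+\tfrac{u^2}{8}$, so the modulus is dominated, uniformly in such $\e$, by $u\mapsto(1+u^2/8)^{-1}\in L^1(\mathbb{R})$.

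With the pointwise limit (whose limit characteristic function is Gaussian, hence integrable) and this uniform majorant, Proposition~\ref{lem:fouriertv} and Proposition~\ref{prof:parseval} yield $\tv\big(Y^\e_{t_\e+r\omega_\e}(x),\,C_x e^{-r}+\mathcal{G}\big)\to0$ as $\e\to0^+$; then the triangle inequality $|\tv(A,C)-\tv(B,C)|\lqq\tv(A,B)$ gives $d^{\e,x}(t_\e+r\omega_\e)=\tv(Y^\e_{t_\e+r\omega_\e}(x),\mathcal{G})\to\tv(C_x e^{-r}+\mathcal{G},\mathcal{G})$, which is the assertion. I expect the uniform-integrability step to be the main obstacle: the exponent $q_\e+1$ diverges while its base $1+w_\e^2$ collapses to $1$, so one must exploit the precise scaling $(q_\e+1)w_\e^2\to u^2$ (equivalently, the Bernoulli estimate above) to produce a non-degenerate integrable bound; the bookkeeping of the exact cancellation of the $\mathrm{O}(1/\e)$ phases in the pointwise limit is the other delicate point.
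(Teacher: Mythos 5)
Your proposal is correct and follows essentially the same route as the paper: reduce via the triangle inequality to $\tv\bigl(Y^\e_{t_\e+r\cdot\omega_\e}(x),C_x e^{-r}+\mathcal{G}\bigr)\to 0$, then verify pointwise convergence, integrability and uniform tail control of the characteristic functions so that Proposition~\ref{lem:fouriertv} applies; your pointwise expansion reproduces Lemma~\ref{Lem:limit 1} and your modulus bound reproduces the estimate in Corollary~\ref{characteristic function}. The only (harmless) deviation is in the uniform-integrability step, where you use Bernoulli's inequality to get the single $\e$-independent majorant $(1+u^2/8)^{-1}$, whereas the paper keeps the exponent $\fb/\e^2$, uses monotonicity of $n\mapsto(1+y/n)^n$ and the Dominated Convergence Theorem — both arguments are valid.
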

\begin{proof}
By the triangle inequality for the total variation distance, it is enough to show that
\begin{equation}\label{eq:tvYg}
\tv(Y^\e_{t_\e+r \cdot \omega_\e}(x),C_x \cdot e^{-r }+\mathcal{G})=0 \quad 
\textrm{ for any } \quad r \in \mathbb{R}.
\end{equation}
To show the preceding limit, we apply Proposition~\ref{lem:fouriertv}  in Appendix~\ref{apendtv}. 
Then we need to show Item~(i) Convergence in distribution, Item~(ii) Fourier integrability and Item~(iii) Tail behavior.

Now, we compute the  characteristic function of the marginal $Y_t^{\e}(x)$. By Corollary~\ref{characteristic function} in Appendix~\ref{apenddist} we have 
\begin{equation}\label{eq:Yfourier}
\mathbb E[e^ {\ii z Y_{t}^{\e }(x)}]= \psi\left(\frac{c_\e(\infty)z}{\sqrt{q_{\e}+1}};t,\e,x \right)e^{-\ii z \sqrt{q_{\e}+1}},\quad z\in \mathbb{R},
\end{equation}
where
\begin{equation}\label{eq:exponente}
\begin{split}
\psi(z;t,\e, x):&=\mathbb E[e^{\ii z X_t^{\e}(x)}]\\
&=\frac{1}{\left(1-\ii\frac{z}{c_\e(t)}\right)^{q_\e+1}} \exp \left(\ii\frac{z c^2_\e(t)xe^{-\fa t}}{c^2_\e(t)+z^2}\right)\exp \left(-\frac{z^2 c_\e(t)xe^{-\fa t}}{c^2_\e(t)+z^2}\right)
\end{split}
\end{equation}
for any $z\in \mathbb{R}$.
Let 
$\varphi_\e(z;x):=\mathbb E[e^ {\ii z Y_{t_\e+r\cdot \omega_\e}^{\e }(x)}]$ for any $z\in \mathbb{R}$, 
where 
$t_\e$ and $\omega_\e$ are defined in~\eqref{eq:timecut}, and $r\in \mathbb{R}$.
Note that Lemma~\ref{Lem:limit 1} in Appendix~\ref{apenddist} implies 
$$\lim_{\e\to 0}\varphi_\e(z;x)=\exp\left(-\frac{z^2}{2}\right)\exp\left(  z \frac{\sqrt{2}}{\sqrt{\fb}}(\fa x-\fb)e^{-r}\right),$$ which yields Item~(i).

We continue with the proof of Item~(ii).
We note that 
\[
\frac{c^2_\e(\infty)}{c^2_\e(t)}\frac{1}{q_\e+1}=\frac{(1-e^{-\fa t})^2\e^2}{2\fb}\quad \textrm{ for any }\quad t>0.
\]
By~\eqref{eq:Yfourier} 
and~\eqref{eq:exponente} for any $r \in \mathbb{R}$ we have
\begin{equation}\label{eq:cotasuperiorF}
\int_{\mathbb{R}} |\varphi_\e(z;x)|\ud z\lqq 
\int_{\mathbb{R}} 
\frac{1}{\left(1+\frac{(1-\e e^{-\fa r })^2\e^2}{2\fb} z^2\right)^{\frac{\fb}{\e^2}}}
\ud z
<\infty\quad \textrm{ when }\quad  \e\in (0,\sqrt{2\fb}),
\end{equation}
where 
$t_\e$ and $\omega_\e$ are defined in~\eqref{eq:timecut}.
It is easy to see that $\int_{\mathbb{R}}|\varphi(z;x)|\ud z< \infty$.
The proof of Item~(ii) is complete. 

Finally, we verify Item~(iii).
Observe that 
\[
\frac{1}{8}\lqq \frac{(1-\e e^{-r})^2}{2}\quad \textrm{ for }\quad  \e\in \left(0,\frac{1}{2}e^{r}\right).
\]
By~\eqref{eq:cotasuperiorF} we obtain
\begin{equation}\label{eq:phipa}
\begin{split}
\int_{|z|\gqq \ell} |\varphi_\e(z;x)|\ud z&\lqq \int_{|z|\gqq \ell} 
\frac{1}{\left(1+\frac{z^2}{8}\frac{\e^2}{\fb}\right)^{\frac{\fb}{\e^2}}}
\ud z\lqq \int_{|z|\gqq \ell} 
\frac{1}{\left(1+\frac{(z^2/8)}{\frac{\fb}{\e^2_0}}\right)^{\frac{\fb}{\e^2_0}}}
\ud z<\infty
\end{split}
\end{equation}
for any $\e\in (0, \e_0)$, where 
$\e_0:=\frac{1}{2}\min\{\frac{1}{2}e^{r},\sqrt{2\fb}\}$.
Then the Dominated Convergence Theorem yields 
\begin{equation*}
\begin{split}
\lim\limits_{\e\to 0^+}\int_{|z|\gqq \ell} |\varphi_\e(z;x)|\ud z
=\int_{|z|\gqq \ell} 
e^{-\frac{z^2}{8}}
\ud z,
\end{split}
\end{equation*}
which implies
\[
\lim\limits_{\ell\to \infty}\limsup\limits_{\e\to 0^+}\int_{|z|\gqq \ell} |\varphi_\e(z;x)|\ud z=0.
\]
This complete the proof of Item~(iii)
and therefore the proof of Lemma~\ref{lem:profile} is complete.
\end{proof}

Now, we analyze the case $x=\fb/\fa$ that yields $C_x=0$. Then Lemma~\ref{lem:profile} does not imply profile cut-off phenomenon at $t_\e+r\cdot \omega_\e$. In fact, in this case, there is no cut-off phenomenon as the following lemma states.
\begin{lemma}[No cut-off phenomenon in the total variation distance for $x=\fb/\fa$]\label{lem:nocutoff}
Let $x=\fb/\fa$ be fixed.
For any function $(s_\e)_{\e>0}$ satisfying $s_\e\to \infty$ as $\e\to 0^+$, it follows that 
\begin{equation*}
\lim\limits_{\e\to 0^+}\tv(X^\e_{s_\e}(x),X^\e_\infty)=0.
\end{equation*}
\end{lemma}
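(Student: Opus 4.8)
The plan is to reduce the statement to a normalized Gaussian comparison and then run the Parseval--Plancherel--Fourier criterion, exactly as in the proof of Lemma~\ref{lem:profile}. By Lemma~\ref{lem:replacement} together with Lemma~\ref{lem:limites} it is enough to prove that $\lim_{\e\to 0^+} d^{\e,\fb/\fa}(s_\e)=0$, where $d^{\e,x}(t)=\tv(Y^\e_t(x),\mathcal{G})$ is the normalized distance in~\eqref{eq:d}. I would then apply Proposition~\ref{lem:fouriertv} to the family $(Y^\e_{s_\e}(\fb/\fa))_{\e>0}$ with limit $\mathcal{G}$, which amounts to checking Item~(i) convergence in distribution, Item~(ii) Fourier integrability, and Item~(iii) uniform control of the Fourier tails.

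Items~(ii) and~(iii) require no new computation. From~\eqref{eq:Yfourier}--\eqref{eq:exponente}, the two exponential factors of~\eqref{eq:exponente} carrying the initial datum have modulus at most $1$, whence for every $t>0$ and every $\e\in(0,\sqrt{2\fb})$
\[
\left|\mathbb{E}\!\left[e^{\ii z Y^\e_t(\fb/\fa)}\right]\right|\lqq \left(1+\tfrac{(1-e^{-\fa t})^2\e^2}{2\fb}\,z^2\right)^{-\fb/\e^2},\qquad z\in\mathbb{R}.
\]
Since $s_\e\to\infty$, we have $(1-e^{-\fa s_\e})^2\gqq \tfrac14$ for all small $\e$, so the right-hand side is dominated for $\e$ small by $(1+\tfrac{z^2}{8}\tfrac{\e^2}{\fb})^{-\fb/\e^2}$ and hence, by monotonicity in $\fb/\e^2$, by the integrable function $(1+\tfrac{z^2/8}{\fb/\e_0^2})^{-\fb/\e_0^2}$ for $\e\in(0,\e_0)$. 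The dominated-convergence argument leading to~\eqref{eq:cotasuperiorF} and~\eqref{eq:phipa}, with $t_\e+r\cdot\omega_\e$ replaced by $s_\e$, then gives Item~(ii) and $\lim_{\ell\to\infty}\limsup_{\e\to 0^+}\int_{|z|\gqq \ell}|\mathbb{E}[e^{\ii z Y^\e_{s_\e}(\fb/\fa)}]|\,\ud z=0$, which is Item~(iii).

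The crux is Item~(i), where I would compute $\lim_{\e\to 0^+}\mathbb{E}[e^{\ii z Y^\e_{s_\e}(\fb/\fa)}]$ from Corollary~\ref{characteristic function} and~\eqref{eq:exponente}, along the lines of Lemma~\ref{Lem:limit 1}. The decisive point is that the phase terms that blow up like $\e^{-1}$ — the one produced by the Gamma factor of~\eqref{eq:exponente} together with the deterministic shift $e^{-\ii z\sqrt{q_\e+1}}$ in~\eqref{eq:Yfourier}, and the one produced by the first purely-imaginary exponential of~\eqref{eq:exponente} — combine into $\ii z\,\e^{-1}e^{-\fa s_\e}\,\tfrac{\sqrt2}{\sqrt{\fb}}(\fa x-\fb)$, which vanishes \emph{exactly} when $x=\fb/\fa$, since then $C_x=0$. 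This is precisely why a limiting characteristic function exists for an \emph{arbitrary} sequence $s_\e\to\infty$: for $x\neq \fb/\fa$ the prefactor $\e^{-1}e^{-\fa s_\e}$ need not converge, whereas for $x=\fb/\fa$ that obstruction is absent, and carrying out the expansion far enough for a possibly slowly growing $s_\e$ — checking that the residual real part tends to $-z^2/2$ — is the main technical burden. What survives is real, negative and equal to $-\tfrac{z^2}{2}(1-e^{-2\fa s_\e})+\mathrm{o}_{\e\to 0^+}(1)$; since $s_\e\to\infty$ forces $e^{-2\fa s_\e}\to 0$, we conclude $\mathbb{E}[e^{\ii z Y^\e_{s_\e}(\fb/\fa)}]\to e^{-z^2/2}$ for every $z\in\mathbb{R}$, i.e.\ $Y^\e_{s_\e}(\fb/\fa)$ converges in distribution to $\mathcal{G}$. (This is transparent from~\eqref{eq:duhamel}: for $x=\fb/\fa$ the deterministic part of the solution is constant, so $Y^\e_t(\fb/\fa)=\mathcal{O}^\e_t(\fb/\fa)/\sigma$ with $\sigma=\tfrac{\sqrt{2\fb}}{2\fa}$ carries no drift and $\mathrm{Var}(\mathcal{O}^\e_t(\fb/\fa)/\sigma)=1-e^{-2\fa t}\to1$.) Feeding Items~(i)--(iii) into Proposition~\ref{lem:fouriertv} yields $d^{\e,\fb/\fa}(s_\e)\to 0$, and Lemma~\ref{lem:limites} then gives $\tv(X^\e_{s_\e}(\fb/\fa),X^\e_\infty)\to 0$, as claimed.
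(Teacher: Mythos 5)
Your proposal is correct and follows essentially the same route as the paper: reduce to $d^{\e,\fb/\fa}(s_\e)\to 0$ via Lemma~\ref{lem:replacement}, then verify the three hypotheses of Proposition~\ref{lem:fouriertv} with the identical Fourier bounds for Items~(ii)--(iii). The only cosmetic difference is that you sketch the Item~(i) computation inline (correctly identifying that the $\e^{-1}e^{-\fa s_\e}C_x$ phase vanishes for $x=\fb/\fa$ and the surviving exponent is $-\tfrac{z^2}{2}(1-e^{-2\fa s_\e})+\mathrm{o}(1)$), whereas the paper delegates exactly this to Lemma~\ref{Lemma lim 2} in Appendix~\ref{apenddist}.
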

\begin{proof}
By Lemma~\ref{lem:replacement} it is enough to show
$\lim_{\e\to 0^+}d^{\e,x}(s_\e)=0$,
where $d^{\e,x}$ is defined 
in~\eqref{eq:dnew}.
Similarly to the proof of Lemma~\ref{lem:profile},
to show the preceding limit, we apply Proposition~\ref{lem:fouriertv}  in Appendix~\ref{apendtv}. 
Then we need to show Item~(i) Convergence in distribution, Item~(ii) Fourier integrability and Item~(iii) Tail behavior.

Let $\widetilde{\varphi}_\e(z;x):=\mathbb E[e^ {\ii z Y_{s_\e}^{\e }(x)}]$ for any  $z\in \mathbb{R}$. Note that Lemma~\ref{Lemma lim 2} in Appendix~\ref{apenddist}
implies

\begin{equation}\label{eq:tildelimite}
\lim\limits_{\e\to 0^+}\widetilde{\varphi}_\e(z;x)=e^{-z^2/2}\quad \textrm{ for all }\quad z\in \mathbb{R}.
\end{equation}
This completes the proof of Item~(i).

We continue with the proof of Item~(ii).
Analogously 
to~\eqref{eq:cotasuperiorF} we obtain
\begin{equation*}
\int_{\mathbb{R}} |\widetilde{\varphi}_\e(z;x)|\ud z\lqq 
\int_{\mathbb{R}} 
\frac{1}{\left(1+\frac{(1-e^{-\fa s_\e})^2\e^2}{2\fb} z^2\right)^{\frac{\fb}{\e^2}}}
\ud z
<\infty\quad \textrm{ when }\quad  \e\in (0,\sqrt{2\fb}),
\end{equation*}
implying Item~(ii).

Finally, Item~(iii) follows similarly to~\eqref{eq:phipa}. Indeed, since $s_\e \to \infty$ as $\e\to 0^+$, we have the existence of $\widetilde{\e}_0:=\widetilde{\e}_0(\fa)$ such that
\[
\frac{1}{8}\lqq \frac{(1-e^{-\fa s_\e})^2}{2}\quad \textrm{ for }\quad  \e\in (0,\min\{\widetilde{\e}_0,\sqrt{2\fb}\}).
\]
Then we have
\begin{equation}\label{eq:phipanew}
\begin{split}
\int_{|z|\gqq \ell} |\widetilde{\varphi}_\e(z;x)|\ud z&\lqq \int_{|z|\gqq \ell} 
\frac{1}{\left(1+\frac{z^2}{8}\frac{\e^2}{\fb}\right)^{\frac{\fb}{\e^2}}}
\ud z\lqq \int_{|z|\gqq \ell} 
\frac{1}{\left(1+\frac{(z^2/8)}{\frac{\fb}{\e^2_0}}\right)^{\frac{\fb}{\e^2_0}}}
\ud z<\infty,
\end{split}
\end{equation}
which implies Item~(iii)
and therefore the proof of Lemma~\ref{lem:nocutoff} is complete.
\end{proof}

\subsubsection{\textbf{Proof of Theorem~\ref{mainresult}}}\label{sub:WAproof}

In this subsection, we stress the fact that Theorem~\ref{mainresult} is just a consequence of what we have already proved up to here.

Let $x\in [0,\infty)$ be fixed.
For $t_\e$ and $\omega_\e$ being defined in~\eqref{eq:timecut},
Lemma~\ref{lem:profile} with the help of Lemma~\ref{lem:replacement} and Lemma~\ref{lem:limites} implies
\begin{equation*}
\lim\limits_{\e\to 0^+}\mathfrak{d}^{\e,x}(t_\e+r\cdot \omega_\e)=\tv(C_x \cdot e^{- r }+\mathcal{G},\mathcal{G})\quad \textrm{ for any } \quad r \in \mathbb{R},
\end{equation*}  
where
$\mathcal{G}$ denotes the standard Gaussian distribution and $C_x=\frac{\sqrt{2}}{\sqrt{\fb}}(\fa x-\fb)$.
This yields~\eqref{eq:Gx}.
For $x\gqq 0$ and $x\neq \fb/\fa$ we have that $C_x\neq 0$, which implies~\eqref{eq:inftylimites}.
and therefore~\eqref{def:cut3} holds true.
Finally, 
Lemma~\ref{lem:nocutoff} yields no cut-off phenomenon at $x=\fb/\fa$ in the sense of~\eqref{def:cut1}.
The proof is complete.

\subsection{\textbf{Proof of Theorem~\ref{mainresultW}: The Wasserstein distance}}\label{subsec TW}
In this section, we show Theorem~\ref{mainresultW}.

We start recalling two basic properties for the Wasserstein distance that are useful during this section.
\begin{lemma}[Basic properties of the Wasserstein distance of order $p>0$]\label{lem:propWA}  Let $X$ and $Y$ be random vectors taking values in $\mathbb{R}^d$. 
Assume that $X$ and $Y$ have finite $p$-th moment for some $p>0$.
Then the following is valid.
\begin{itemize}
\item[(i)] Translation invariance: For any deterministic vectors $v_1,v_2\in \mathbb{R}^d$ it follows that
\[
\mathcal{W}_p(v_1+X, v_2+Y)=\mathcal{W}_p(v_1-v_2+X,Y)=\mathcal{W}_p(X,v_2-v_1+Y).
\]
\item[(ii)] Homogeneity: For any non-zero constant $c$ it follows that
\[
\mathcal{W}_p(cX, cY)=|c|^{1\wedge p}\mathcal{W}_p(X,Y)=
\begin{cases}
|c|\mathcal{W}_p(X,Y) & \textrm{ for }\quad p\in [1,\infty),\\
|c|^p\mathcal{W}_p(X,Y) & \textrm{ for }\quad p\in (0,1].
\end{cases}
\]
\end{itemize}
\end{lemma}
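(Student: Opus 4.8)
The statement to prove is Lemma~\ref{lem:propWA}, the two basic properties (translation invariance and homogeneity) of the Wasserstein distance of order $p>0$ on $\mathbb{R}^d$. Both parts follow from the defining formula~\eqref{eq:Wdef} by transporting couplings through affine maps, so the whole proof is a change-of-variables argument on the set of couplings $\mathcal{C}(\nu_1,\nu_2)$. First I would record the elementary fact that if $T:\mathbb{R}^d\times\mathbb{R}^d\to\mathbb{R}^d\times\mathbb{R}^d$ is a bijection of the product space of the form $T(u,v)=(\phi(u),\psi(v))$ with $\phi,\psi$ Borel bijections of $\mathbb{R}^d$, then pushing forward gives a bijection $\mathcal{C}(\nu_1,\nu_2)\to\mathcal{C}(\phi_*\nu_1,\psi_*\nu_2)$, $\Pi\mapsto T_*\Pi$. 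Indeed $T_*\Pi(B\times\mathbb{R}^d)=\Pi(\phi^{-1}(B)\times\mathbb{R}^d)=\nu_1(\phi^{-1}(B))=(\phi_*\nu_1)(B)$ and similarly for the second marginal, and $T$ being a bijection makes the correspondence invertible.

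\textbf{Translation invariance.} Apply the above with $\phi(u)=u+v_1$ and $\psi(v)=v+v_2$, so $\phi_*(\mathrm{Law}\,X)=\mathrm{Law}(v_1+X)$ and $\psi_*(\mathrm{Law}\,Y)=\mathrm{Law}(v_2+Y)$. For any coupling $\Pi$ of $X$ and $Y$, the change of variables $(u,v)=(u'-v_1,v'-v_2)$ in the integral gives
\begin{equation}\label{eq:transinv-proof}
\int_{\mathbb{R}^d\times\mathbb{R}^d}\|u'-v'\|^p\,(T_*\Pi)(\ud u',\ud v')
=\int_{\mathbb{R}^d\times\mathbb{R}^d}\|(u+v_1)-(v+v_2)\|^p\,\Pi(\ud u,\ud v),
\end{equation}
and $(u+v_1)-(v+v_2)=(u+(v_1-v_2))-v$, which is exactly the transport cost for the coupling of $v_1-v_2+X$ and $Y$ induced by the shift in the first coordinate only. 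Taking the infimum over $\Pi$ (equivalently over $T_*\Pi$, since $T_*$ is a bijection of the coupling sets) and raising to the power $1\wedge(1/p)$ yields $\mathcal{W}_p(v_1+X,v_2+Y)=\mathcal{W}_p(v_1-v_2+X,Y)$; the other equality $=\mathcal{W}_p(X,v_2-v_1+Y)$ follows by the same computation translating the second coordinate instead, or by applying the first identity with the roles swapped.

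\textbf{Homogeneity.} Apply the construction with $\phi(u)=\psi(u)=cu$, $c\neq 0$, so the pushforwards are $\mathrm{Law}(cX)$ and $\mathrm{Law}(cY)$. For any coupling $\Pi$ of $X$ and $Y$,
\begin{equation}\label{eq:homog-proof}
\int\|u'-v'\|^p\,(T_*\Pi)(\ud u',\ud v')=\int\|cu-cv\|^p\,\Pi(\ud u,\ud v)=|c|^p\int\|u-v\|^p\,\Pi(\ud u,\ud v).
\end{equation}
Since $T_*$ is a bijection $\mathcal{C}(\mathrm{Law}\,X,\mathrm{Law}\,Y)\to\mathcal{C}(\mathrm{Law}\,cX,\mathrm{Law}\,cY)$, taking infima gives $\inf_{\Pi}\int\|u'-v'\|^p(T_*\Pi)(\ud u',\ud v')=|c|^p\inf_{\Pi}\int\|u-v\|^p\Pi(\ud u,\ud v)$. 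Now raise both sides to the power $1\wedge(1/p)$: for $p\in[1,\infty)$ the exponent is $1/p$ and $(|c|^p)^{1/p}=|c|$, so $\mathcal{W}_p(cX,cY)=|c|\,\mathcal{W}_p(X,Y)$; for $p\in(0,1]$ the exponent is $1$, giving $\mathcal{W}_p(cX,cY)=|c|^p\,\mathcal{W}_p(X,Y)$. In both cases the result is $|c|^{1\wedge p}\mathcal{W}_p(X,Y)$, and finiteness of the $p$-th moments of $X,Y$ guarantees all integrals appearing are finite so the manipulations are legitimate.

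\textbf{Main obstacle.} There is essentially no deep obstacle here; the only point requiring a little care is the bookkeeping that $\Pi\mapsto T_*\Pi$ is genuinely a \emph{bijection} between the two coupling sets (so that taking the infimum on either side gives the same value), rather than merely a map in one direction — this is what lets us pass the factor $|c|^p$, respectively the shift, through the $\inf$. I would state this correspondence once as a small observation and then reuse it for both items.
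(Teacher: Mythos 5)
Your proof is correct: the pushforward-of-couplings argument, together with the observation that $\Pi\mapsto T_*\Pi$ is a bijection between the two coupling sets, is exactly the standard way to establish both identities, and your handling of the exponent $1\wedge(1/p)$ in the two regimes $p\gqq 1$ and $p\in(0,1]$ is right. The paper itself does not prove this lemma but merely cites Lemma~2.2 of~\cite{BHPWA}, so your self-contained argument is a legitimate (and essentially the canonical) substitute for that reference.
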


\begin{proof}
For details, we refer  to Lemma~2.2 
in~\cite{BHPWA}.
\end{proof}

\subsubsection{\textbf{The limit theorem for the invariant measure}}
In this subsection, similarly as Lemma~\ref{lem:lcltx}, we prove that the marginal $X^\e_\infty$ satisfies
a Gaussian limit theorem in the Wasserstein distance. 

\begin{lemma}[Limit theorem for $X^\e_\infty$ in $\mathcal{W}_p$ for any $p>0$]\label{lem:lcltxW}
Let $m_\e:=\frac{q_\e+1}{c_\e(\infty)}=\frac{\fb}{\fa}$ and $\sigma_\e:=\frac{\sqrt{q_\e+1}}{c_\e(\infty)}=\frac{\sqrt{2\fb}}{2\fa}\e$. Then for any $p>0$ it follows that
\begin{equation}\label{eq:localw}
\lim\limits_{\e\to 0^+}\mathcal{W}_p\left(\frac{X^\e_\infty-m_\e}{\sigma_\e},\mathcal{G}\right)=0,
\end{equation}
where $\mathcal{G}$ denotes the standard Gaussian distribution.
\end{lemma}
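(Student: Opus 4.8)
The plan is to mimic the reduction used in the proof of Lemma~\ref{lem:lcltx} and then, in place of the local limit theorem, invoke the classical characterization of $\mathcal{W}_p$-convergence on the real line: convergence to a limiting law is equivalent to weak convergence together with convergence of the $p$-th absolute moments.

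First I would reduce to a standardized Gamma law. By Lemma~\ref{lem:scapropgamma} in Appendix~\ref{apen:gamma} we have $X^\e_\infty\stackrel{d}{=}c_\e(\infty)^{-1}\Gamma(q_\e+1,1)$, hence, exactly as in~\eqref{eq:normalization},
\[
\frac{X^\e_\infty-m_\e}{\sigma_\e}\stackrel{d}{=}Z_\e:=\frac{\Gamma(q_\e+1,1)-(q_\e+1)}{\sqrt{q_\e+1}}.
\]
Since $q_\e+1=\nicefrac{2\fb}{\e^2}\to\infty$ as $\e\to 0^+$, it suffices to prove that $\mathcal{W}_p(Z_\e,\mathcal{G})\to 0$ for every $p>0$. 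Weak convergence $Z_\e\Rightarrow\mathcal{G}$ is already contained in Lemma~\ref{lem:lcltx} (convergence in total variation implies convergence in distribution), or may be read off from the characteristic function $\mathbb{E}[e^{\ii zZ_\e}]=e^{-\ii z\sqrt{q_\e+1}}(1-\ii z/\sqrt{q_\e+1})^{-(q_\e+1)}\to e^{-z^2/2}$.

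The analytic core is a uniform-in-$\e$ control of the moments of $Z_\e$. Here I would use the cumulant structure of the Gamma law: the $k$-th cumulant of $\Gamma(q_\e+1,1)$ equals $(k-1)!\,(q_\e+1)$, so, by shift invariance of cumulants of orders $\gqq 2$, the $k$-th cumulant of $Z_\e$ equals $0$ for $k=1$, equals $1$ for $k=2$, and equals $(k-1)!\,(q_\e+1)^{1-k/2}$ for $k\gqq 3$, which is bounded (by $(k-1)!$ for $\e$ small enough that $q_\e+1\gqq 1$) and tends to $0$. Expressing moments as polynomials in the cumulants, it follows that $\sup_{\e}\mathbb{E}[Z_\e^{2m}]<\infty$ for every $m\in\mathbb{N}$; choosing $2m>p$ and applying the de~la~Vall\'ee--Poussin criterion shows that the family $\{|Z_\e|^{p}\}_{\e}$ is uniformly integrable. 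Combined with $Z_\e\Rightarrow\mathcal{G}$, this yields $\mathbb{E}[|Z_\e|^{p}]\to\mathbb{E}[|\mathcal{G}|^{p}]$.

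Finally I would conclude. For probability measures on $(\mathbb{R},|\cdot|)$ and any $p\gqq 1$, convergence in $\mathcal{W}_p$ to a limiting law is equivalent to weak convergence together with convergence of the $p$-th absolute moments, see~\cite[Theorem~6.9]{Villani2009}; for $p\in(0,1)$ the quantity $\mathcal{W}_p$ with exponent $1\wedge(1/p)=1$ is precisely the $1$-Wasserstein distance associated with the metric $(x,y)\mapsto|x-y|^{p}$ on $\mathbb{R}$, so the same statement applies. By the two previous paragraphs both conditions hold for $Z_\e$ and $\mathcal{G}$, whence $\mathcal{W}_p(Z_\e,\mathcal{G})\to 0$ and the lemma follows. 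The main obstacle is the uniform moment bound of the third paragraph: it is the point where one must exploit the explicit Gamma structure, since $\mathcal{W}_p$-convergence, unlike weak or total-variation convergence, is not implied by the pointwise convergence of characteristic functions alone and genuinely requires tightness of the $p$-th moments.
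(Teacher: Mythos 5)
Your proposal is correct and follows the same overall strategy as the paper: reduce to the standardized Gamma variable $Z_\e$ via Lemma~\ref{lem:scapropgamma} and~\eqref{eq:normalization}, combine the weak convergence $Z_\e\Rightarrow\mathcal{G}$ (already supplied by Lemma~\ref{lem:lcltx}) with uniform integrability of $|Z_\e|^p$, and invoke Villani's characterization of $\mathcal{W}_p$-convergence (the paper cites Theorem~7.12, you cite the ``weak convergence plus convergence of $p$-th moments'' form; these are the same criterion). The one genuine divergence is the technical route to uniform integrability. The paper bounds $\mathbb{E}[|Z_\e|^p\mathbf{1}_{\{|Z_\e|\gqq N\}}]$ by the elementary inequality $|z|^p\lqq C_{\lambda,p}(e^{\lambda z}+e^{-\lambda z})$ followed by Cauchy--Schwarz, and then computes the moment generating function of $Z_\e$ explicitly from the Gamma law, showing $\mathbb{E}[e^{\lambda Z_\e}]\to e^{\lambda^2/2}$ via the second-order expansion of Lemma~\ref{lem:logconv}. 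You instead read off all cumulants of $Z_\e$ ($\kappa_k=(k-1)!(q_\e+1)^{1-k/2}$ for $k\gqq 2$), deduce uniform bounds on every even moment, and apply de~la~Vall\'ee--Poussin (or simply Markov's inequality with exponent $2m>p$). Both are valid; your cumulant argument is marginally more elementary and avoids the Taylor-expansion lemma, whereas the paper's exponential-moment machinery is not wasted effort there, since the same Cauchy--Schwarz-plus-MGF template is reused verbatim for the time-dependent marginals $Y^\e_{t_\e+r\cdot\omega_\e}(x)$ in Lemma~\ref{lem:profileWA} and Lemma~\ref{lem:nocutoffWA}, where explicit moment generating functions (Lemma~\ref{Lem:limit 1} and Lemma~\ref{Lemma lim 2}) are available but a clean cumulant expansion is not. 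Your handling of the case $p\in(0,1)$ via the metric $(x,y)\mapsto|x-y|^p$ is also sound.
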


\begin{proof}
By~\eqref{eq:normalization} we have that
\begin{equation}\label{eq:gammae}
Y^\e_\infty:=\frac{X^\e_\infty-m_\e}{\sigma_\e}\stackrel{d}{=}\frac{\Gamma(q_\e+1,1)-(q_\e+1)}{\sqrt{q_\e+1}}
\end{equation}
Lemma~\ref{lem:lcltx} yields that $\frac{X^\e_\infty-m_\e}{\sigma_\e}$ converges in distribution to $\mathcal{G}$ as $\e\to 0^+$.
Hence, by Theorem~7.12 
in~\cite{Villani2009} it is enough to show that the family $(Y^\e_\infty)_{\e\in (0,\sqrt{2\fb})}$ is uniformly integrable, that is,
\begin{equation}\label{eq:uiw}
\lim_{N\to \infty} \limsup_{\e\to 0^+}\mathbb{E}[|Y^\e_\infty|^p \mathbf{1}_{\{|Y^\e_\infty|\gqq N\}}]=0.
\end{equation}
We start with the following observation.
For any $|\lambda|>0$ and $p>0$ there exists a positive constant $C_{\lambda,p}$
such that 
\begin{equation}\label{eq:expine}
|z|^p\lqq C_{\lambda,p} e^{|\lambda| |z|}\lqq C_{\lambda,p} e^{\lambda z}+C_{\lambda,p} e^{-\lambda z}\quad \textrm{ for any } \quad z\in \mathbb{R}.
\end{equation}
Then the Cauchy--Schwarz inequality with the help of subadditivity of the square root map $[0,\infty)\ni r\mapsto \sqrt{r}$ yields
\begin{equation}\label{eq:cssub}
\begin{split}
\mathbb{E}[|Y^{\e}_\infty|^p \mathbf{1}_{\{|Y^\e_\infty|\gqq N\}}]&\lqq \sqrt{\mathbb{E}[|Y^{\e}_\infty|^{2p}]}
\sqrt{\mathbb{P}\left(|Y^\e_\infty|\gqq N\right)}\\
&\lqq \sqrt{C_{\lambda,2p}\mathbb{E}[ e^{\lambda Y^{\e}_\infty }]+ C_{\lambda,2p}\mathbb{E}[e^{-\lambda Y^{\e}_\infty}]}\sqrt{\mathbb{P}\left(|Y^\e_\infty|\gqq N\right)}\\
&\lqq \sqrt{C_{\lambda,2p}}\left(\sqrt{\mathbb{E}[e^{\lambda Y^{\e}_\infty}]}+\sqrt{\mathbb{E}[e^{-\lambda Y^{\e}_\infty}]}\right)\sqrt{\mathbb{P}\left(|Y^\e_\infty|\gqq N\right)}.
\end{split}
\end{equation}
We claim that 
\begin{equation}\label{eq:claimui}
\lim_{\e\to 0^+}\mathbb{E}[e^{\lambda Y^{\e}_\infty}]=e^{\lambda^2/2}=:K_\lambda\quad \textrm{ for any }\quad \lambda\in \mathbb{R}. 
\end{equation}
Then for any $|\lambda|>0$ we have
\begin{equation}\label{eq:limsupnueva1}
\begin{split}
\limsup_{\e\to 0^+}\mathbb{E}[|Y^{\e}_\infty|^p \mathbf{1}_{\{|Y^\e_\infty|\gqq N\}}]&\lqq \sqrt{C_{\lambda,2p}}\limsup_{\e\to 0^+}(\sqrt{\mathbb{E}[e^{\lambda Y^{\e}_\infty}]}+\sqrt{\mathbb{E}[e^{-\lambda Y^{\e}_\infty}]})\sqrt{\mathbb{P}\left(|Y^\e_\infty|\gqq N\right)}\\
&\lqq 2\sqrt{C_{\lambda,2p}K_{\lambda}}
\limsup_{\e\to 0^+}\sqrt{\mathbb{P}\left(|Y^\e_\infty|\gqq N\right)}\\
&= 2\sqrt{C_{\lambda,2p}K_{\lambda}}
\sqrt{\mathbb{P}\left(|\mathcal{G}|\gqq N\right)},
\end{split}
\end{equation}
where in the last equality we have used that $Y^\e_\infty$ converges in distribution to $\mathcal{G}$ as $\e\to 0^+$.
Now, sending $N\to \infty$ we obtain~\eqref{eq:uiw}.
In the sequel, we 
show~\eqref{eq:claimui}.
By~\eqref{eq:gammae} we have
\begin{equation}\label{eq:limsup1}
\begin{split}
\mathbb{E}[e^{\lambda Y^\e_\infty}]&= 
\mathbb{E}[e^{\lambda \frac{\Gamma(q_\e+1,1)}{ \sqrt{q_\e+1}}}]
e^{-\lambda \sqrt{q_\e+1}}=\mathbb{E}[e^{\frac{\lambda}{\sqrt{q_\e+1}}\Gamma(q_\e+1,1)}]
e^{-\lambda \sqrt{q_\e+1}}\\
&=\left(1-\frac{\lambda}{\sqrt{q_\e+1}}\right)^{-(q_\e+1)}e^{-\lambda \sqrt{q_\e+1}}=\frac{1}{\left(1+\frac{-\lambda}{\sqrt{q_\e+1}}\right)^{(q_\e+1)}}\frac{1}{e^{-\sqrt{q_\e+1}(-\lambda)}}
\end{split}
\end{equation}
for any $\lambda<\sqrt{q_\e+1}$.
By Lemma~\ref{lem:logconv} in Appendix~\ref{ap:tools} we obtain~\eqref{eq:claimui}.
The proof is complete. 
\end{proof}

\subsubsection{\textbf{The decoupling argument}}

In the sequel, we estimate
\[
\frac{1}{\e^{1\wedge p}}\mathcal{W}_p(X^\e_t(x),X^\e_\infty).
\]
By Item~(ii) in Lemma~\ref{lem:propWA} and the triangle inequality for $\mathcal{W}_p$ we have
\begin{equation}
\begin{split}
\frac{1}{\e^{1\wedge p}}\mathcal{W}_p(X^\e_t(x),X^\e_\infty)&=
\frac{\sigma^{1\wedge p}_\e}{\e^{1\wedge p}}\mathcal{W}_p\left(\frac{X^\e_t(x)-m_\e}{\sigma_\e},\frac{X^\e_\infty-m_\e}{\sigma_\e}\right)\\
&\hspace{-3cm}\lqq \left(
\frac{\sqrt{2\fb}}{2\fa}\right)^{1\wedge p}\mathcal{W}_p\left(\frac{X^\e_t(x)-m_\e}{\sigma_\e},\mathcal{G}\right)
+\left(\frac{\sqrt{2\fb}}{2\fa}\right)^{1\wedge p}\mathcal{W}_p\left(\mathcal{G},\frac{X^\e_\infty-m_\e}{\sigma_\e}\right),
\end{split}
\end{equation}
where in the last inequality we have used that $\sigma_\e=\frac{\sqrt{2\fb}}{2\fa}\e$
and $\mathcal{G}$ denotes the standard Gaussian distribution.
Similarly, we obtain
\begin{equation}
\begin{split}
&\left|\frac{1}{\e^{1\wedge p}}\mathcal{W}_p(X^\e_t(x),X^\e_\infty)-
\left(\frac{\sqrt{2\fb}}{2\fa}\right)^{1\wedge p}\mathcal{W}_p\left(\frac{X^\e_t(x)-m_\e}{\sigma_\e},\mathcal{G}\right)\right|\\
&\hspace{8cm}\lqq
\left(\frac{\sqrt{2\fb}}{2\fa}\right)^{1\wedge p}\mathcal{W}_p\left(\frac{X^\e_\infty-m_\e}{\sigma_\e},\mathcal{G}\right).
\end{split}
\end{equation}

The preceding inequality yields the following replacement lemma.
\begin{lemma}[Replacement lemma in the Wasserstein distance of order $p>0$]
\label{lem:reemplazoKRW}
Let $p>0$ be fixed. For any $x\gqq 0$ and $t>0$ it follows that 
\begin{equation}
\begin{split}
&\left|\frac{1}{\e^{1\wedge p}}\mathcal{W}_p(X^\e_t(x),X^\e_\infty)-
\left(\frac{\sqrt{2\fb}}{2\fa}\right)^{1\wedge p}\mathcal{W}_p\left(\frac{X^\e_t(x)-m_\e}{\sigma_\e},\mathcal{G}\right)\right|\\
&\hspace{8cm}\lqq
\left(\frac{\sqrt{2\fb}}{2\fa}\right)^{1\wedge p}\mathcal{W}_p\left(\frac{X^\e_\infty-m_\e}{\sigma_\e},\mathcal{G}\right),
\end{split}
\end{equation}
where $\mathcal{G}$ denotes the standard Gaussian distribution.
\end{lemma}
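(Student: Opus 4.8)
The plan is to reduce the statement to the normalized variables $Y^\e_t(x):=\frac{X^\e_t(x)-m_\e}{\sigma_\e}$ and $Y^\e_\infty:=\frac{X^\e_\infty-m_\e}{\sigma_\e}$, and to combine the two structural properties of the Wasserstein distance collected in Lemma~\ref{lem:propWA} with the triangle inequality for $\mathcal{W}_p$. First I would use the affine relations $X^\e_t(x)=m_\e+\sigma_\e Y^\e_t(x)$ and $X^\e_\infty=m_\e+\sigma_\e Y^\e_\infty$ together with translation invariance (Item~(i)) and homogeneity (Item~(ii)) of $\mathcal{W}_p$ to write
\[
\mathcal{W}_p\big(X^\e_t(x),X^\e_\infty\big)=\sigma_\e^{1\wedge p}\,\mathcal{W}_p\big(Y^\e_t(x),Y^\e_\infty\big).
\]
Dividing by $\e^{1\wedge p}$ and using $\sigma_\e=\frac{\sqrt{2\fb}}{2\fa}\e$, the prefactor $\sigma_\e^{1\wedge p}/\e^{1\wedge p}$ collapses to the $\e$-independent constant $\big(\frac{\sqrt{2\fb}}{2\fa}\big)^{1\wedge p}$.

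Next I would insert the standard Gaussian $\mathcal{G}$ as an intermediate probability measure and apply the triangle inequality for $\mathcal{W}_p$ — which is a genuine metric on $\mathbb{M}_{1,p}(\mathbb{R})$ for every $p>0$, thanks to the exponent $1\wedge(1/p)$ appearing in~\eqref{eq:Wdef} — in both directions. Writing $A:=Y^\e_t(x)$ and $B:=Y^\e_\infty$, this gives $\mathcal{W}_p(A,\mathcal{G})-\mathcal{W}_p(B,\mathcal{G})\lqq \mathcal{W}_p(A,B)$ and, by the symmetric choice, $\mathcal{W}_p(B,\mathcal{G})-\mathcal{W}_p(A,\mathcal{G})\lqq \mathcal{W}_p(A,B)$, hence $|\mathcal{W}_p(A,\mathcal{G})-\mathcal{W}_p(B,\mathcal{G})|\lqq \mathcal{W}_p(A,B)$. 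Multiplying by $\big(\frac{\sqrt{2\fb}}{2\fa}\big)^{1\wedge p}$ and combining with the identity from the first step yields precisely the two-sided bound claimed; this is exactly the chain of inequalities displayed immediately above the lemma, so the argument amounts to a direct application of Lemma~\ref{lem:propWA} and the triangle inequality, plus Lemma~\ref{lem:lcltxW} to identify the error term $\mathcal{W}_p\big(\frac{X^\e_\infty-m_\e}{\sigma_\e},\mathcal{G}\big)$.

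The only point that requires a moment's care — and the one I would flag as the main, albeit modest, obstacle — is the regime $p\in(0,1)$, where $\mathcal{W}_p$ carries no outer root and the homogeneity factor is $|c|^p$ rather than $|c|$. Both the triangle inequality and this scaling remain valid there because $(u,v)\mapsto|u-v|^p$ is itself a metric on $\mathbb{R}$ for $p\lqq 1$ (subadditivity of $r\mapsto r^p$), so $\mathcal{W}_p$ inherits the triangle inequality from its cost function and $|cu-cv|^p=|c|^p|u-v|^p$ gives homogeneity; this is exactly what Lemma~\ref{lem:propWA} records. Finiteness of all the quantities involved is automatic, since $X^\e_t(x)$, $X^\e_\infty$ and $\mathcal{G}$ have finite moments of every order (the CIR marginals are sub-exponential, as seen from the moment generating function recalled in Appendix~\ref{apenddist}), so no integrability obstruction arises and the proof is complete.
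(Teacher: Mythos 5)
Your argument is correct and coincides with the paper's own proof: the lemma is obtained exactly by combining translation invariance and homogeneity of $\mathcal{W}_p$ (Lemma~\ref{lem:propWA}) with the triangle inequality through the intermediate law $\mathcal{G}$, and then identifying $\sigma_\e^{1\wedge p}/\e^{1\wedge p}=\bigl(\tfrac{\sqrt{2\fb}}{2\fa}\bigr)^{1\wedge p}$. The only cosmetic remark is that Lemma~\ref{lem:lcltxW} is not needed for this replacement bound itself but only afterwards, to show that the error term $\mathcal{W}_p\bigl(\tfrac{X^\e_\infty-m_\e}{\sigma_\e},\mathcal{G}\bigr)$ vanishes as $\e\to 0^+$.
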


By Lemma~\ref{lem:lcltxW} and Lemma~\ref{lem:reemplazoKRW} we obtain the following lemma.
\begin{lemma}[Asymptotically equivalent distances in the Wasserstein distance of order $p>0$]\label{lem:limitesKWR}
Let  $p>0$ and $x\gqq 0$ be fixed. For any function $(s_\e)_{\e>0}$ such that $s_\e\to \infty$ as $\e\to 0^+$ it follows that 
\begin{equation}
\limsup_{\e\to 0^+}\frac{1}{\e^{1\wedge p}}\mathcal{W}_p(X^\e_{s_\e}(x),X^\e_\infty)=\left(\frac{\sqrt{2\fb}}{2\fa}\right)^{1\wedge p}\limsup_{\e\to 0^+}\mathcal{W}_p\left(\frac{X^\e_{s_\e}(x)-m_\e}{\sigma_\e},\mathcal{G}\right)
\end{equation}
and 
\begin{equation}
\liminf_{\e\to 0^+}\frac{1}{\e^{1\wedge p}}\mathcal{W}_p(X^\e_{s_\e}(x),X^\e_\infty)=\left(\frac{\sqrt{2\fb}}{2\fa}\right)^{1\wedge p}\liminf_{\e\to 0^+}\mathcal{W}_p\left(\frac{X^\e_{s_\e}(x)-m_\e}{\sigma_\e},\mathcal{G}\right).
\end{equation}
\end{lemma}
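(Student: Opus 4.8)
The plan is to derive Lemma~\ref{lem:limitesKWR} directly from the replacement estimate of Lemma~\ref{lem:reemplazoKRW} together with the Wasserstein limit theorem for the invariant measure, Lemma~\ref{lem:lcltxW}. No fresh computation with the characteristic function or the moment generating function of $X^\e_{s_\e}(x)$ is needed at this stage; the statement is a soft consequence of what has already been proved.

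First I would fix $p>0$, $x\gqq 0$ and a function $(s_\e)_{\e>0}$ with $s_\e\to\infty$ as $\e\to 0^+$, and apply Lemma~\ref{lem:reemplazoKRW} with $t:=s_\e$. This gives, for every $\e\in(0,\sqrt{2\fb})$,
\[
\left|\frac{1}{\e^{1\wedge p}}\mathcal{W}_p(X^\e_{s_\e}(x),X^\e_\infty)-\left(\frac{\sqrt{2\fb}}{2\fa}\right)^{1\wedge p}\mathcal{W}_p\!\left(\frac{X^\e_{s_\e}(x)-m_\e}{\sigma_\e},\mathcal{G}\right)\right|\lqq\left(\frac{\sqrt{2\fb}}{2\fa}\right)^{1\wedge p}\mathcal{W}_p\!\left(\frac{X^\e_\infty-m_\e}{\sigma_\e},\mathcal{G}\right).
\]
The crucial feature is that the right-hand side does not involve $s_\e$ at all, and by Lemma~\ref{lem:lcltxW} it tends to $0$ as $\e\to 0^+$. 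Abbreviating the two quantities inside the absolute value by $a_\e$ and $b_\e$ and the right-hand side by $\rho_\e$, I would then invoke the elementary fact that if $|a_\e-b_\e|\lqq\rho_\e$ with $\rho_\e\to 0^+$, then $\limsup_{\e\to 0^+}a_\e=\limsup_{\e\to 0^+}b_\e$ and $\liminf_{\e\to 0^+}a_\e=\liminf_{\e\to 0^+}b_\e$; this follows by writing $a_\e\lqq b_\e+\rho_\e$ and $b_\e\lqq a_\e+\rho_\e$ and taking $\limsup$ (respectively $\liminf$) on both sides. Pulling the positive constant $(\sqrt{2\fb}/(2\fa))^{1\wedge p}$ out of $b_\e$ then yields the two claimed identities.

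The only point that deserves care is that the $\limsup$ and $\liminf$ in the statement may be infinite: in view of the shift-linearity of $\mathcal{W}_p$ recorded in Remark~\ref{rem:shapeWa}, the profile $\mathcal{W}_p(|C_x|e^{-r}+\mathcal{G},\mathcal{G})$ is unbounded as $r\to-\infty$, so for suitable choices of $(s_\e)_{\e>0}$ the left-hand side can diverge. I therefore want the comparison of $\limsup$/$\liminf$ above to be read in the extended half-line $[0,+\infty]$; this is harmless, since the inequalities $a_\e\lqq b_\e+\rho_\e$ and $b_\e\lqq a_\e+\rho_\e$ together with $\rho_\e\to 0$ force equality of the upper (respectively lower) limits whether or not they are finite. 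Beyond this bookkeeping the argument is immediate, and I do not anticipate a genuine obstacle here; the substantive work is already encapsulated in Lemma~\ref{lem:reemplazoKRW} (homogeneity and the triangle inequality for $\mathcal{W}_p$) and in Lemma~\ref{lem:lcltxW} (the uniform integrability of $Y^\e_\infty$).
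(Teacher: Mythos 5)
Your argument is correct and is exactly the route the paper takes: the paper states this lemma with no written proof beyond the sentence that it follows from Lemma~\ref{lem:lcltxW} and Lemma~\ref{lem:reemplazoKRW}, and your proposal simply fills in that deduction (replacement bound with $t=s_\e$, right-hand side independent of $s_\e$ and vanishing as $\e\to 0^+$, then the elementary $\limsup$/$\liminf$ comparison). Your extra remark about reading the upper and lower limits in $[0,+\infty]$ is a harmless and sensible piece of bookkeeping that the paper leaves implicit.
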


Finally, the limiting profile is calculated in the following lemma.
\begin{lemma}[Limiting profile]\label{lem:profileWA}
Let $p>0$ and $x\gqq 0$ be fixed.
For $t_\e$ and $\omega_\e$ being defined in~\eqref{eq:timecut}, it follows that 
\begin{equation*}
\lim\limits_{\e\to 0^+} \frac{1}{\e^{1\wedge p}}\mathcal{W}_p(X^\e_{t_\e+r\cdot \omega_\e}(x),X^\e_\infty)
=\left(\frac{\sqrt{2\fb}}{2\fa}\right)^{1\wedge p}\mathcal{W}_p(C_x\cdot e^{- r }+\mathcal{G},\mathcal{G})
\quad \textrm{ for any } \quad r \in \mathbb{R},
\end{equation*}
where $\mathcal{G}$ denotes the standard Gaussian distribution and $C_x=\frac{\sqrt{2}}{\sqrt{\fb}}(\fa x-\fb)$.
\end{lemma}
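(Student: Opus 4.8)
The plan is to mimic the structure used for the total variation case (Lemma~\ref{lem:profile}), replacing the Parseval--Plancherel--Fourier criterion by the characterization of $\mathcal{W}_p$-convergence via convergence in distribution plus uniform integrability of $p$-th moments (Theorem~7.12 in~\cite{Villani2009}). By Lemma~\ref{lem:reemplazoKRW} and Lemma~\ref{lem:limitesKWR} it suffices to compute the limit of $\mathcal{W}_p\!\left(Y^\e_{t_\e+r\cdot\omega_\e}(x),\mathcal{G}\right)$, where $Y^\e_t(x)=\frac{X^\e_t(x)-m_\e}{\sigma_\e}$, and to show it equals $\mathcal{W}_p\!\left(C_x\cdot e^{-r}+\mathcal{G},\mathcal{G}\right)$; the stated conclusion then follows upon multiplying by $\left(\frac{\sqrt{2\fb}}{2\fa}\right)^{1\wedge p}$. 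First I would establish convergence in distribution: this is already available from the characteristic-function computation in the proof of Lemma~\ref{lem:profile} (via Lemma~\ref{Lem:limit 1} in Appendix~\ref{apenddist}), which gives
\[
\lim_{\e\to 0^+}\mathbb{E}\!\left[e^{\ii z Y^\e_{t_\e+r\cdot\omega_\e}(x)}\right]=\exp\!\left(-\frac{z^2}{2}\right)\exp\!\left(z\,C_x\,e^{-r}\right),
\]
i.e. $Y^\e_{t_\e+r\cdot\omega_\e}(x)\Rightarrow C_x\cdot e^{-r}+\mathcal{G}$ as $\e\to 0^+$.

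The remaining and main point is the uniform integrability of the $p$-th moments of the family $\big(Y^\e_{t_\e+r\cdot\omega_\e}(x)\big)_{\e\in(0,\sqrt{2\fb})}$, that is,
\[
\lim_{N\to\infty}\ \limsup_{\e\to 0^+}\ \mathbb{E}\!\left[\,|Y^\e_{t_\e+r\cdot\omega_\e}(x)|^p\,\mathbf{1}_{\{|Y^\e_{t_\e+r\cdot\omega_\e}(x)|\gqq N\}}\right]=0.
\]
I would establish this exactly as in the proof of Lemma~\ref{lem:lcltxW}: apply the Cauchy--Schwarz inequality to split off a probability factor $\sqrt{\mathbb{P}(|Y^\e_{t_\e+r\cdot\omega_\e}(x)|\gqq N)}$, which tends to $\sqrt{\mathbb{P}(|C_x\cdot e^{-r}+\mathcal{G}|\gqq N)}$ by the portmanteau theorem and the convergence in distribution just noted; bound the remaining factor $\mathbb{E}[|Y^\e_{t_\e+r\cdot\omega_\e}(x)|^{2p}]$ via the elementary inequality $|z|^{2p}\lqq C_{\lambda,2p}(e^{\lambda z}+e^{-\lambda z})$ of~\eqref{eq:expine}; and control the exponential moments $\mathbb{E}[e^{\pm\lambda Y^\e_{t_\e+r\cdot\omega_\e}(x)}]$ using the explicit moment generating function of $X^\e_t(x)$ from Corollary~\ref{characteristic function} (or its analogue) in Appendix~\ref{apenddist}. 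For $t=t_\e+r\cdot\omega_\e$ one has $e^{-\fa t}=\e\,e^{-r}$, so the exponential term in the MGF of $X^\e_t(x)$ involving $x e^{-\fa t}$ is uniformly bounded as $\e\to 0^+$, and the $\Gamma(q_\e+1,c_\e(t))$-part is handled exactly as in the proof of~\eqref{eq:claimui} through Lemma~\ref{lem:logconv} in Appendix~\ref{ap:tools}; thus $\limsup_{\e\to 0^+}\mathbb{E}[e^{\pm\lambda Y^\e_{t_\e+r\cdot\omega_\e}(x)}]<\infty$ for every $\lambda\in\mathbb{R}$. Sending $N\to\infty$ then gives the uniform integrability.

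With convergence in distribution and uniform integrability of the $p$-th moments in hand, Theorem~7.12 in~\cite{Villani2009} (for $p\gqq 1$; together with the standard analogue for $p\in(0,1)$ using that $r\mapsto r^p$ is then subadditive) yields
\[
\lim_{\e\to 0^+}\mathcal{W}_p\!\left(Y^\e_{t_\e+r\cdot\omega_\e}(x),\mathcal{G}\right)=\mathcal{W}_p\!\left(C_x\cdot e^{-r}+\mathcal{G},\mathcal{G}\right)\quad\textrm{ for any }\quad r\in\mathbb{R}.
\]
Combining this with Lemma~\ref{lem:reemplazoKRW} and Lemma~\ref{lem:limitesKWR} (which already absorb the $\mathcal{W}_p$-convergence of $\frac{X^\e_\infty-m_\e}{\sigma_\e}$ to $\mathcal{G}$ from Lemma~\ref{lem:lcltxW}) and the homogeneity constant $\sigma_\e^{1\wedge p}/\e^{1\wedge p}=\left(\frac{\sqrt{2\fb}}{2\fa}\right)^{1\wedge p}$ from Item~(ii) of Lemma~\ref{lem:propWA} completes the proof. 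I expect the only genuine obstacle to be the uniform-in-$\e$ control of the exponential moments of $Y^\e_{t_\e+r\cdot\omega_\e}(x)$ near the critical window — but this is precisely the place where $e^{-\fa t_\e}=\e$ makes the drift-dependent term in the MGF tame, so the estimate from Lemma~\ref{lem:lcltxW} transfers with only cosmetic changes.
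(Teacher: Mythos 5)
Your proposal is correct and follows essentially the same route as the paper: reduce via the replacement/equivalence lemmas to the renormalized quantity $Y^\e_{t_\e+r\cdot\omega_\e}(x)$, use the characteristic-function limit for convergence in distribution, and verify uniform integrability of the $p$-th moments through the Cauchy--Schwarz splitting and the exponential-moment control coming from the explicit moment generating function (the content of Lemma~\ref{Lem:limit 1}), then invoke Theorem~7.12 of~\cite{Villani2009}. The only cosmetic difference is that the paper first passes through the triangle inequality to isolate $\mathcal{W}_p\left(Y^\e_{t_\e+r\cdot\omega_\e}(x),C_x\cdot e^{-r}+\mathcal{G}\right)\to 0$, which is implicit in your argument.
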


\begin{proof}
Recall by~\eqref{eq:defYt} 
and~\eqref{def:Y} that
\[
Y^\e_{t}(x)=\frac{X^\e_{t}(x)-m_\e}{\sigma_\e}=\frac{c_\e(\infty)X^\e_t(x)-(q_\e+1)}{\sqrt{q_\e+1}}\quad \textrm{ for all }\quad t\gqq 0.
\]
By Lemma~\ref{lem:limitesKWR} it is enough to show that 
\begin{equation}\label{eq:claminnew}
\lim\limits_{\e\to 0^+}\mathcal{W}_p\left(Y^\e_{t_\e+r\cdot \omega_\e}(x),\mathcal{G}\right)= \mathcal{W}_p\left(C_x \cdot e^{-r}+\mathcal{G},\mathcal{G}\right)\quad \textrm{ for any }\quad r\in \mathbb{R}.
\end{equation}
By the triangle inequality for $\mathcal{W}_p$ we note that 
\[
\mathcal{W}_p\left(Y^\e_{t_\e+r\cdot \omega_\e}(x),\mathcal{G}\right)\lqq 
\mathcal{W}_p\left(Y^\e_{t_\e+r\cdot \omega_\e}(x),C_x \cdot e^{-r}+\mathcal{G}\right)+\mathcal{W}_p\left(C_x \cdot e^{-r}+\mathcal{G},\mathcal{G}\right)
\]
and 
\[
\mathcal{W}_p\left(\mathcal{G},C_x \cdot e^{-r}+\mathcal{G}\right)
\lqq 
\mathcal{W}_p\left(\mathcal{G},Y^\e_{t_\e+r\cdot \omega_\e}(x)\right)+
\mathcal{W}_p\left(Y^\e_{t_\e+r\cdot \omega_\e}(x),C_x \cdot e^{-r}+\mathcal{G}\right).
\]
Therefore, to 
prove~\eqref{eq:claminnew} it is enough to show that 
\begin{equation}\label{eq:rane}
\lim_{\e\to 0^+}\mathcal{W}_p\left(Y^\e_{t_\e+r\cdot \omega_\e}(x),C_x\cdot e^{-r}+\mathcal{G}\right)=0 \quad \textrm{ for any }\quad r\in \mathbb{R}.
\end{equation}
The proof of~\eqref{eq:rane} is analogous to the proof of Lemma~\ref{lem:lcltxW}.
Indeed, since
$Y^\e_{t_\e+r\cdot \omega_\e}(x)$ converges in distribution to $C_x\cdot e^{- r }+\mathcal{G}$ as $\e\to 0^+$, see~\eqref{eq:tvYg}, 
Theorem~7.12 in~\cite{Villani2009} implies that  
it is enough to show that the family
$(Y^\e_{t_\e+r\cdot \omega_\e}(x))_{\e\in (0,\sqrt{2\fb}}$ is uniformly integrable.
More precisely, 
\begin{equation}\label{eq:uiw7}
\lim_{N\to \infty} \limsup_{\e\to 0^+}\mathbb{E}[|Y^\e_{t_\e+r\cdot \omega_\e}(x)|^p \mathbf{1}_{\{|Y^\e_{t_\e+r\cdot \omega_\e}(x)|\gqq N\}}]=0.
\end{equation}
Similarly to~\eqref{eq:cssub} we obtain
\begin{equation}\label{eq:UIYt}
\begin{split}
\mathbb{E}\left[|Y^\e_{t_\e+r\cdot \omega_\e}(x)|^p \mathbf{1}_{\{|Y^\e_{t_\e+r\cdot \omega_\e}(x)|\gqq N\}}\right]
&\lqq \sqrt{C_{\lambda,2p}} \left(\sqrt{\mathbb{E}[e^{\lambda Y^\e_{t_\e+r\cdot \omega_\e}(x)}]}+\sqrt{\mathbb{E}[e^{-\lambda Y^\e_{t_\e+r\cdot \omega_\e}(x)}]}\right)\\
&\qquad \times \sqrt{\mathbb{P}\left(|Y^\e_{t_\e+r\cdot \omega_\e}(x)|\gqq N\right)},
\end{split}
\end{equation}
where the constant $C_{\lambda,2p}$ is given in~\eqref{eq:expine}.
With the help of Lemma \ref{Lem:limit 1} in Appendix~\ref{apenddist} we obtain
\begin{equation*}
\lim_{\e\to 0^+}\mathbb{E}[e^{\lambda Y^\e_{s_\e}(x)}]=e^{\lambda^2/2}e^{\lambda C_x}\quad \textrm{ for any }\quad \lambda\in \mathbb{R}. 
\end{equation*}
An analogous reasoning using 
in~\eqref{eq:limsupnueva1} 
yields~\eqref{eq:uiw7}.
The proof is complete.
\end{proof}

In the sequel, we analyze the case $x=\fb/\fa$ that yields $C_x=0$. Hence, Lemma~\ref{lem:profileWA} does not imply profile cut-off phenomenon at $t_\e+r\cdot \omega_\e$. In fact, in this case, there is no cut-off phenomenon as the following lemma states.
\begin{lemma}[No cut-off phenomenon for $x=\fb/\fa$ in $\mathcal{W}_p$ for $p>0$]\label{lem:nocutoffWA}
Let  $p>0$ be fixed. For $x=\fb/\fa$ and
for any function $(s_\e)_{\e>0}$ satisfying $s_\e\to \infty$ as $\e\to 0^+$, it follows that 
\begin{equation}\label{eq:demosno}
\lim\limits_{\e\to 0^+}\frac{1}{\e^{1\wedge p}}\mathcal{W}_p(X^\e_{s_\e}(x),X^\e_\infty)=0.
\end{equation}
\end{lemma}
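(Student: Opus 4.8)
The plan is to transfer the statement, through the decoupling already in place, to a Wasserstein limit theorem for the normalized marginal $Y^\e_{s_\e}(x)$, and then to run essentially the same uniform-integrability argument as in the proof of Lemma~\ref{lem:lcltxW}, now exploiting the degeneracy $C_x=0$ forced by the choice $x=\fb/\fa$.

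First I would invoke Lemma~\ref{lem:limitesKWR} (applicable since $s_\e\to\infty$), which reduces \eqref{eq:demosno} to showing
\[
\lim_{\e\to 0^+}\mathcal{W}_p\big(Y^\e_{s_\e}(x),\mathcal{G}\big)=0,\qquad\textrm{where}\quad Y^\e_{s_\e}(x)=\frac{X^\e_{s_\e}(x)-m_\e}{\sigma_\e},
\]
and $\mathcal{G}$ is a standard Gaussian random variable. For the weak-convergence input, the choice $x=\fb/\fa$ makes $C_x=0$, so the characteristic-function computation in the proof of Lemma~\ref{lem:nocutoff} (namely \eqref{eq:tildelimite}, via Lemma~\ref{Lemma lim 2} in Appendix~\ref{apenddist}) yields $Y^\e_{s_\e}(x)\Rightarrow\mathcal{G}$ as $\e\to0^+$. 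By Theorem~7.12 in~\cite{Villani2009}, $\mathcal{W}_p$-convergence to $\mathcal{G}$ is then equivalent to uniform integrability of the $p$-th moments of the family $(Y^\e_{s_\e}(x))_{\e\in(0,\sqrt{2\fb})}$, i.e.
\[
\lim_{N\to\infty}\limsup_{\e\to0^+}\mathbb{E}\big[|Y^\e_{s_\e}(x)|^p\mathbf{1}_{\{|Y^\e_{s_\e}(x)|\gqq N\}}\big]=0.
\]

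I would establish this exactly as in \eqref{eq:expine}--\eqref{eq:limsupnueva1}: Cauchy--Schwarz together with the elementary bound \eqref{eq:expine} gives
\[
\mathbb{E}\big[|Y^\e_{s_\e}(x)|^p\mathbf{1}_{\{|Y^\e_{s_\e}(x)|\gqq N\}}\big]\lqq\sqrt{C_{\lambda,2p}}\Big(\sqrt{\mathbb{E}[e^{\lambda Y^\e_{s_\e}(x)}]}+\sqrt{\mathbb{E}[e^{-\lambda Y^\e_{s_\e}(x)}]}\Big)\sqrt{\mathbb{P}\big(|Y^\e_{s_\e}(x)|\gqq N\big)},
\]
so it suffices to check that for each fixed $\lambda\in\mathbb{R}$ the quantity $\mathbb{E}[e^{\lambda Y^\e_{s_\e}(x)}]$ is finite for all small $\e$ and satisfies $\lim_{\e\to0^+}\mathbb{E}[e^{\lambda Y^\e_{s_\e}(x)}]=e^{\lambda^2/2}$; then letting $N\to\infty$ in the displayed bound and using $Y^\e_{s_\e}(x)\Rightarrow\mathcal{G}$ closes the estimate as in \eqref{eq:limsupnueva1}. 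Finiteness holds because the moment generating function of $\Gamma(q_\e+1,1)$ is finite on $(-\infty,\sqrt{q_\e+1})$ and $\sqrt{q_\e+1}=\sqrt{2\fb}/\e\to\infty$. For the limit, I would insert the moment generating function formula for $X^\e_{s_\e}(x)$ recorded in Appendix~\ref{apenddist} into \eqref{def:Y} and expand its logarithm as in Lemma~\ref{lem:logconv}: the shift $-\lambda\sqrt{q_\e+1}$ together with the Gamma factor produces a diverging term of order $\sqrt{q_\e+1}\,e^{-\fa s_\e}$ which is cancelled, precisely when $x=\fb/\fa$, by the initial-condition term of order $x\,e^{-\fa s_\e}/\e$; what survives converges to $\lambda^2/2$ because $e^{-\fa s_\e}\to0$ and $(1-e^{-\fa s_\e})^2\to1$. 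Equivalently, by \eqref{eq:duhamel} one has $Y^\e_{s_\e}(\fb/\fa)=\mathcal{O}^\e_{s_\e}(\fb/\fa)/\sigma$ with $\sigma=\tfrac{\sqrt{2\fb}}{2\fa}$, and the limit may be read off from the exponential moments of $\mathcal{O}^\e_{s_\e}$.

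The main obstacle is this last step: the exponential moment $\mathbb{E}[e^{\lambda Y^\e_{s_\e}(x)}]$ must be expanded carefully, since its logarithm is a sum of contributions two of which individually diverge like $e^{-\fa s_\e}/\e$, and one has to verify their exact cancellation. This is exactly where both hypotheses enter — $x=\fb/\fa$ (which makes $C_x=0$ and produces the cancellation) and $s_\e\to\infty$ (which renders the leftover genuine initial-condition term $\mathrm{O}(e^{-\fa s_\e})$ negligible). All remaining steps are transcriptions of the arguments already carried out for Lemma~\ref{lem:lcltxW}, Lemma~\ref{lem:nocutoff}, and Lemma~\ref{lem:profileWA}.
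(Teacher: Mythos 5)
Your proposal is correct and follows essentially the same route as the paper: reduction via Lemma~\ref{lem:limitesKWR}, weak convergence from \eqref{eq:tildelimite}, Theorem~7.12 of~\cite{Villani2009}, and uniform integrability via the Cauchy--Schwarz/exponential-moment bound, with the limit $\mathbb{E}[e^{\lambda Y^\e_{s_\e}(x)}]\to e^{\lambda^2/2}$ supplied by Lemma~\ref{Lemma lim 2}. The cancellation of the divergent $\sqrt{q_\e+1}\,e^{-\fa s_\e}$ terms that you flag as the main obstacle is exactly what the paper's Lemma~\ref{Lemma lim 2} carries out in Appendix~\ref{apenddist}.
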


\begin{proof}
By Lemma~\ref{lem:limitesKWR}, the limit~\eqref{eq:demosno} is equivalent to the limit
\begin{equation}\label{eq:litint}
\lim_{\e\to 0^+}\mathcal{W}_p\left(Y^\e_{s_\e}(x),\mathcal{G}\right)=0,
\end{equation}
where $\mathcal{G}$ denotes the Gaussian distribution and
\[
Y^\e_{t}(x)=\frac{X^\e_{t}(x)-m_\e}{\sigma_\e}=\frac{c_\e(\infty)X^\e_t(x)-(q_\e+1)}{\sqrt{q_\e+1}}\quad \textrm{ for all }\quad t\gqq 0.
\]
By~\eqref{eq:tildelimite} we have that $Y^\e_{s_\e}(x)$ tends in distribution to $\mathcal{G}$ as $\e\to 0^+$. To 
conclude~\eqref{eq:litint} it is enough to show that the family $(Y^\e_{s_\e}(x))_{\e\in (0,\sqrt{2\fb})}$ is uniformly integrable.

Similarly to~\eqref{eq:cssub} we obtain
\begin{equation}\label{eq:UIYtnew}
\begin{split}
\mathbb{E}\left[|Y^\e_{s_\e}(x)|^p \mathbf{1}_{\{|Y^\e_{s_\e}(x)|\gqq N\}}\right]
&\lqq \sqrt{C_{\lambda,2p}} \left(\sqrt{\mathbb{E}[e^{\lambda Y^\e_{s_\e}(x)}]}+\sqrt{\mathbb{E}[e^{-\lambda Y^\e_{s_\e}(x)}]}\right)\\
&\qquad \times \sqrt{\mathbb{P}\left(|Y^\e_{s_\e}(x)|\gqq N\right)}
\end{split}
\end{equation}
for $|\lambda|>0$,
where the constant $C_{\lambda,2p}$ is given in~\eqref{eq:expine}.
With the help of Lemma~\ref{Lemma lim 2}  in Appendix~\ref{apenddist} we obtain
\begin{equation*}
\lim_{\e\to 0^+}\mathbb{E}[e^{\lambda Y^\e_{s_\e}(x)}]=e^{\lambda^2/2}\quad \textrm{ for any }\quad \lambda\in \mathbb{R}. 
\end{equation*}
An analogous reasoning using 
in~\eqref{eq:limsupnueva1} 
yields that the family $(Y^\e_{s_\e}(x))_{\e\in (0,\sqrt{2\fb})}$ is uniformly integrable.
The proof is complete.
\end{proof}

\subsubsection{\textbf{Proof of Theorem~\ref{mainresultW}}}

In this subsection, we stress the fact that Theorem~\ref{mainresultW} is just a consequence of what we have already proved up to here.

Let $p>0$ be fixed.
For $x\gqq 0$ and for  $t_\e$ and $\omega_\e$ being defined 
in~\eqref{eq:timecut},
Lemma~\ref{lem:profileWA} implies
\begin{equation*}
\lim\limits_{\e\to 0^+} \frac{1}{\e^{1\wedge p}}\mathcal{W}_p(X^\e_{t_\e+r\cdot \omega_\e}(x),X^\e_\infty)
=\left(\frac{\sqrt{2\fb}}{2\fa}\right)^{1\wedge p}\mathcal{W}_p(C_x\cdot e^{- r }+\mathcal{G},\mathcal{G})
\quad \textrm{ for any } \quad r \in \mathbb{R},
\end{equation*}
where $\mathcal{G}$ denotes the standard Gaussian distribution and $C_x=\frac{\sqrt{2}}{\sqrt{\fb}}(\fa x-\fb)$.

This yields~\eqref{eq:GxW}.
For $x\gqq 0$ and $x\neq \fb/\fa$ we have that $C_x\neq 0$, which implies~\eqref{eq:inftylimitesW}.
and therefore~\eqref{def:cut3} holds true.

Finally, 
Lemma~\ref{lem:nocutoffWA} yields no cut-off phenomenon at $x=\fb/\fa$ in the sense 
of~\eqref{def:cut1}. The proof is complete.

\appendix

\section{\textbf{General properties of the total variation distance and 
Parseval--Plancherel--Fourier approach to total variation convergence}}\label{apendtv}

In this section, we provide general properties of the total variation distance. 
As we have already discussed in Section~\ref{sec:introd}, in general, convergence in distribution does not imply convergence in the total variation distance.
We then impose suitable hypotheses on the sequence of characteristic functions that ensure convergence in the total variation distance. 
Since the latter could be of independent interest, we state the results in a multidimensional setting.

Along this section, $d$ always is a positive fixed integer, $\|\cdot\|$ denotes the Euclidean norm in $\mathbb{R}^d$, $\langle \cdot,\cdot \rangle$ denotes the standard inner product in $\mathbb{R}^d$ and $\mathcal{B}(\mathbb{R}^d)$ are the Borelian sets of $\mathbb{R}^d$.

\begin{lemma}[General properties of the total variation distance]\label{lem:trasca}
Let $X$ and $Y$ be random vectors defined in the probability space  $(\Omega,\mathcal{F},\mathbb{P})$ and taking values in $\mathbb{R}^d$. Then the following is valid.
\begin{itemize}
\item[(i)] Translation invariance: For any deterministic vectors $v_1,v_2\in \mathbb{R}^d$ it follows that
\[
\tv(v_1+X,v_2+Y)=\tv(v_1-v_2+X,Y)=\tv(X,v_2-v_1+Y).
\]
\item[(ii)] Scaling invariance: For any  non-zero constant $c$ it follows that
\[
\tv(cX,cY)=\tv(X,Y). 
\]
\end{itemize}
\end{lemma}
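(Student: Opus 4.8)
The plan is to reduce both statements to a single observation: the total variation distance is unchanged when one applies the \emph{same} Borel isomorphism to both of its arguments. Translations and non-zero dilations are affine homeomorphisms of $\mathbb{R}^d$, so both claims will follow by specializing this observation.

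First I would establish the observation. Let $\phi:\mathbb{R}^d\to\mathbb{R}^d$ be a bijection with $\phi$ and $\phi^{-1}$ both Borel measurable. If $\Pi\in\mathcal{C}(\mathbb{P}_X,\mathbb{P}_Y)$, then the pushforward $(\phi\times\phi)_{\#}\Pi$ has first marginal $\phi_{\#}\mathbb{P}_X=\mathbb{P}_{\phi(X)}$ and second marginal $\phi_{\#}\mathbb{P}_Y=\mathbb{P}_{\phi(Y)}$, hence lies in $\mathcal{C}(\mathbb{P}_{\phi(X)},\mathbb{P}_{\phi(Y)})$; moreover $\Pi\mapsto(\phi\times\phi)_{\#}\Pi$ is a bijection onto $\mathcal{C}(\mathbb{P}_{\phi(X)},\mathbb{P}_{\phi(Y)})$ with inverse $\Pi'\mapsto(\phi^{-1}\times\phi^{-1})_{\#}\Pi'$. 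Since $\phi$ is injective we have $\{\phi(u)\neq\phi(v)\}=\{u\neq v\}$, so the change-of-variables formula gives
\[
\int_{\mathbb{R}^d\times\mathbb{R}^d}\mathbf{1}_{\{u\neq v\}}\,(\phi\times\phi)_{\#}\Pi(\ud u,\ud v)
=\int_{\mathbb{R}^d\times\mathbb{R}^d}\mathbf{1}_{\{u\neq v\}}\,\Pi(\ud u,\ud v).
\]
Taking the infimum over $\Pi\in\mathcal{C}(\mathbb{P}_X,\mathbb{P}_Y)$ on the right and over $\mathcal{C}(\mathbb{P}_{\phi(X)},\mathbb{P}_{\phi(Y)})$ on the left, and using the bijection just noted together with the definition~\eqref{eq:tvdef}, yields $\tv(\phi(X),\phi(Y))=\tv(X,Y)$.

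Item~(ii) is then immediate: apply the observation with $\phi(u)=cu$, which for $c\neq 0$ is a linear homeomorphism of $\mathbb{R}^d$. For Item~(i), I would apply the observation twice with translations. First, with $\phi(u)=u+v_1$ applied to the pair $\big(X,\,(v_2-v_1)+Y\big)$, one gets $\tv\big(v_1+X,\,v_2+Y\big)=\tv\big(X,\,(v_2-v_1)+Y\big)$, using $v_1+\big((v_2-v_1)+Y\big)=v_2+Y$. Then, with $\phi(u)=u+(v_1-v_2)$ applied to the same pair $\big(X,\,(v_2-v_1)+Y\big)$, one gets $\tv\big(X,\,(v_2-v_1)+Y\big)=\tv\big((v_1-v_2)+X,\,Y\big)$. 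Chaining these two equalities gives exactly the asserted identities.

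There is essentially no genuine obstacle here: the only point worth a line of care is checking that $(\phi\times\phi)_{\#}$ is a bijection between the coupling sets, which is immediate because an affine homeomorphism has an affine (hence Borel) inverse and $(\phi\times\phi)^{-1}=\phi^{-1}\times\phi^{-1}$. As an alternative route, one could bypass couplings altogether and use the dual formula $\tv(\mu,\nu)=\sup_{A\in\mathcal{B}(\mathbb{R}^d)}|\mu(A)-\nu(A)|$ together with $\phi_{\#}\mu(\phi(A))=\mu(A)$ and the fact that $A\mapsto\phi(A)$ permutes $\mathcal{B}(\mathbb{R}^d)$; but the coupling argument is the most direct given the definition~\eqref{eq:tvdef} adopted in the paper.
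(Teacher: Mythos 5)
Your proof is correct. Note that the paper does not actually write out an argument here: it simply cites Theorem~5.2 of Devroye and Lugosi, which records the invariance of the total variation distance under applying one and the same invertible measurable transformation to both laws. Your proposal supplies a complete, self-contained justification of exactly that invariance, working directly from the coupling definition~\eqref{eq:tvdef}: the map $\Pi\mapsto(\phi\times\phi)_{\#}\Pi$ is a bijection between the two coupling sets, injectivity of $\phi$ preserves the off-diagonal indicator, and the infima therefore coincide. The specializations to $\phi(u)=cu$ and to the two translations are carried out correctly and yield the asserted identities. The only points one might add for completeness are routine: the off-diagonal set $\{u\neq v\}$ is Borel in $\mathbb{R}^d\times\mathbb{R}^d$, and the inverse of $(\phi\times\phi)_{\#}$ is $(\phi^{-1}\times\phi^{-1})_{\#}$ by functoriality of the pushforward, both of which you implicitly use and explicitly flag. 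Your alternative remark via the dual formula $\tv(\mu,\nu)=\sup_{A}|\mu(A)-\nu(A)|$ is in fact closer in spirit to the cited reference; either route is perfectly adequate, and what your coupling argument buys is that it matches the definition the paper actually adopts.
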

The proof is a direct consequence of Theorem~5.2 in~\cite{Devroyeetalt2001}.

\begin{lemma}[Scheff\'e's Lemma]\label{lem:scheffe}
Let $(\Omega,\mathcal{F},\mathbb{P})$ be a probability space
and consider 
a sequence $(\mathbb{P}_n)_{n\in \mathbb{N}}$ of probability measures on space $(\Omega,\mathcal{F})$. 
For each $n\in \mathbb{N}$
assume that $f_n:\mathbb{R}^d\to [0,\infty)$ is a density of $\mathbb{P}_n$ with respect to the Lebesgue measure on 
$(\mathbb{R}^d, \mathcal{B}(\mathbb{R}^d))$.
In addition, assume that 
$f:\mathbb{R}^d\to [0,\infty)$ is a density of $\mathbb{P}$ with respect to the Lebesgue measure on 
$(\mathbb{R}^d, \mathcal{B}(\mathbb{R}^d))$.
If $\lim\limits_{n\to \infty}f_n(x)=f(x)$ for $x$-almost everywhere (w.r.t. the Lebesgue measure), then it follows that
\[
\lim\limits_{n\to \infty}\tv(\mathbb{P}_n,\mathbb{P})=0. 
\]
\end{lemma}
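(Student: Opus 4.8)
The plan is to reduce to the classical $L^1$ formulation of the total variation distance and then run a dominated-convergence argument on the positive part of $f-f_n$. First I would recall the standard identity
$\tv(\mathbb{P}_n,\mathbb{P}) = \tfrac{1}{2}\int_{\mathbb{R}^d} |f_n(x)-f(x)|\,\mathrm{d}x$,
valid whenever both measures are absolutely continuous with respect to the Lebesgue measure on $(\mathbb{R}^d,\mathcal{B}(\mathbb{R}^d))$; this is the density/coupling characterization of the total variation distance (see, e.g., \cite{Devroyeetalt2001}, the same reference used for Lemma~\ref{lem:trasca}). Hence it suffices to prove that $\int_{\mathbb{R}^d}|f_n-f|\,\mathrm{d}x \to 0$ as $n\to\infty$.

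Next I would set $g_n := (f-f_n)^+ = \max\{f-f_n,0\}$. Then $0 \lqq g_n \lqq f$ pointwise, and since $f_n(x)\to f(x)$ for $x$-almost every $x$, we get $g_n(x)\to 0$ for $x$-almost every $x$. Because $f$ is a probability density it is Lebesgue-integrable, so the Dominated Convergence Theorem (with dominating function $f$) yields $\int_{\mathbb{R}^d} g_n\,\mathrm{d}x \to 0$. Then I would invoke mass conservation: since $\int_{\mathbb{R}^d} f_n\,\mathrm{d}x = \int_{\mathbb{R}^d} f\,\mathrm{d}x = 1$, we have $\int_{\mathbb{R}^d}(f-f_n)\,\mathrm{d}x = 0$, hence $\int (f-f_n)^+\,\mathrm{d}x = \int (f-f_n)^-\,\mathrm{d}x$, and therefore $\int_{\mathbb{R}^d}|f-f_n|\,\mathrm{d}x = \int(f-f_n)^+\,\mathrm{d}x + \int(f-f_n)^-\,\mathrm{d}x = 2\int_{\mathbb{R}^d} g_n\,\mathrm{d}x \to 0$. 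Combining this with the first step gives $\tv(\mathbb{P}_n,\mathbb{P}) \to 0$, which is the claim.

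The only delicate point — and the reason one cannot apply the Dominated Convergence Theorem to $|f_n-f|$ directly — is that $|f_n-f|$ carries no obvious \emph{fixed} integrable majorant: the natural bound $|f_n-f|\lqq f_n+f$ involves the moving sequence $f_n$. Passing to the positive part $g_n$, which satisfies the uniform bound $g_n\lqq f$, circumvents this, and the mass-conservation identity then transfers the conclusion from $\int g_n$ back to the full $L^1$ norm $\int |f-f_n|$. Beyond these observations the argument is routine, with no further measurability or integrability subtleties.
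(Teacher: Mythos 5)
Your proof is correct and is precisely the classical argument behind Scheff\'e's Lemma: the paper itself gives no proof but simply cites Lemma~3.3.2 of~\cite{Reiss1989}, and the argument there (reduction to the $L^1$ identity for $\tv$, domination of $(f-f_n)^+$ by $f$, and mass conservation to recover the full $L^1$ norm) is exactly the one you present. Nothing is missing.
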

The proof is well-known and it can be found in Lemma~3.3.2 of~\cite{Reiss1989}.
\vspace{0.3cm}

Harmonic analysis and representation theory have been applied to bound the total variation distance to equilibrium in compact groups, and in particular, in finite groups, see Chapter~2 in~\cite{Ceccheetalt} and Chapter~3 in~\cite{PDIbook}.
In Proposition~\ref{lem:fouriertv} and Proposition~\ref{prof:parseval} below, 
using Fourier inversion and the Parseval--Plancherel isometry, we 
obtain total variation convergence in $\mathbb{R}^d$.
\begin{proposition}[Fourier approach to total variation convergence]\label{lem:fouriertv}
Let $(X_\e)_{\e>0}$ be a family of random vectors defined in the probability space $(\Omega,\mathcal{F},\mathbb{P})$ and taking values on $(\mathbb{R}^d,\mathcal{B}(\mathbb{R}^d))$.
Let $X$ be a random vector defined in $(\Omega,\mathcal{F},\mathbb{P})$ and taking values on $(\mathbb{R}^d,\mathcal{B}(\mathbb{R}^d))$.
Assume that 
\begin{itemize}
\item[(i)] Convergence in distribution:
$X_\e\stackrel{d}{\longrightarrow} X$ as $\e\to 0^+$.
\item[(ii)] Fourier integrability:
\begin{equation}\label{eq:fourier}
\varphi_\e,\varphi \in L^1(\mathbb{R}^d)\quad \textrm{ for each }\quad \e>0,
\end{equation}
where $\varphi_\e$ and $\varphi$ denote the characteristic function of $X_\e$ and $X$, respectively.
\item[(iii)] Tail behavior (uniform integrability):
\begin{equation}\label{tail}
\limsup_{\ell \to \infty}\limsup_{\e\to 0^+}\int_{\{|z|\gqq \ell\}}|\varphi_\e(z)|\ud z=0.
\end{equation}
\end{itemize}
Then it follows that
\begin{equation}\label{eq:tvlimit}
\lim\limits_{\e\to 0^+} \tv(X_\e,X)=0.
\end{equation}
\end{proposition}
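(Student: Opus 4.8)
The plan is to reduce the assertion to Scheff\'e's Lemma (Lemma~\ref{lem:scheffe}) by promoting the hypotheses to pointwise convergence of densities. First I would note that~\eqref{eq:fourier} and the Fourier inversion theorem guarantee that for each $\e>0$ the law of $X_\e$ is absolutely continuous with bounded continuous density $f_\e(x)=(2\pi)^{-d}\int_{\mathbb{R}^d}e^{-\ii\langle z,x\rangle}\varphi_\e(z)\,\ud z$, and likewise $X$ has density $f(x)=(2\pi)^{-d}\int_{\mathbb{R}^d}e^{-\ii\langle z,x\rangle}\varphi(z)\,\ud z$. Since it suffices to prove $\tv(X_{\e_n},X)\to 0$ along an arbitrary sequence $\e_n\downarrow 0$, Lemma~\ref{lem:scheffe} reduces everything to showing that $f_\e(x)\to f(x)$ as $\e\to 0^+$ for every fixed $x\in\mathbb{R}^d$.

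To establish this pointwise convergence, fix $x\in\mathbb{R}^d$ and a truncation level $\ell>0$, and write
\[
(2\pi)^d\big(f_\e(x)-f(x)\big)=\int_{\{|z|<\ell\}}e^{-\ii\langle z,x\rangle}\big(\varphi_\e(z)-\varphi(z)\big)\,\ud z+\int_{\{|z|\gqq \ell\}}e^{-\ii\langle z,x\rangle}\big(\varphi_\e(z)-\varphi(z)\big)\,\ud z.
\]
For the low-frequency part I would use~(i): convergence in distribution forces $\varphi_\e\to\varphi$ pointwise on $\mathbb{R}^d$ (the elementary direction of L\'evy's continuity theorem, $z\mapsto e^{\ii\langle z,\cdot\rangle}$ being bounded continuous), and since $|\varphi_\e-\varphi|\lqq 2$ is integrable on the bounded set $\{|z|<\ell\}$, the Dominated Convergence Theorem makes this term vanish as $\e\to 0^+$. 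For the high-frequency part, bound its modulus by $\int_{\{|z|\gqq \ell\}}|\varphi_\e(z)|\,\ud z+\int_{\{|z|\gqq \ell\}}|\varphi(z)|\,\ud z$; taking $\limsup_{\e\to 0^+}$ and invoking~(iii) for the first summand, then $\ell\to\infty$ together with $\varphi\in L^1(\mathbb{R}^d)$ for the second, both contributions tend to $0$. Combining, $\limsup_{\e\to 0^+}|f_\e(x)-f(x)|\lqq (2\pi)^{-d}\big(\limsup_{\e\to 0^+}\int_{\{|z|\gqq\ell\}}|\varphi_\e(z)|\,\ud z+\int_{\{|z|\gqq\ell\}}|\varphi(z)|\,\ud z\big)$ for every $\ell>0$, whence $f_\e(x)\to f(x)$. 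Scheff\'e's Lemma then yields~\eqref{eq:tvlimit}.

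I expect the only genuinely delicate point to be the order in which the limits are taken: hypothesis~(iii) controls the tail mass of $\varphi_\e$ only \emph{after} passing $\e\to 0^+$, so one must fix the cutoff $\ell$, let $\e\to 0^+$, and only then let $\ell\to\infty$; interchanging these is precisely what is forbidden, and it is also the reason mere pointwise convergence $\varphi_\e\to\varphi$ (equivalently, convergence in distribution) is not enough without the uniform-integrability hypothesis~(iii). The remaining ingredients---Fourier inversion for $L^1$ characteristic functions, the bound $|\varphi_\e|\lqq 1$, and the Dominated Convergence Theorem on a bounded set---are standard.
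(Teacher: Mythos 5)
Your proposal is correct and follows essentially the same route as the paper: Fourier inversion to obtain bounded continuous densities, the same low/high-frequency splitting with hypothesis (iii) plus $\varphi\in L^1$ handling the tail and convergence in distribution handling the compact part, and Scheff\'e's Lemma to conclude. The only cosmetic difference is that you justify the vanishing of the low-frequency term by dominated convergence with the bound $|\varphi_\e-\varphi|\lqq 2$, whereas the paper cites the locally uniform convergence of characteristic functions; both are valid.
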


\begin{remark}[$L^{1}(\mathbb{R}^d)$-convergence of the Fourier transform]\label{rem:L1}
Assume the hypothesis of Proposition~\ref{lem:fouriertv}.
Following step by step the proof of
Proposition~\ref{lem:fouriertv}
one can see that
\[
\lim\limits_{\e\to 0^+}\int_{\mathbb{R}^d} |\varphi_\e(z)-\varphi(z)| \ud z=0.
\]
\end{remark}

Hypothesis~\eqref{eq:fourier} with the help of the Fourier Inversion Theorem yields the existence of continuous and bounded densities $f_\e$ and $f$ such that
\begin{equation}\label{ec:densidades}
\begin{split}
f_\e(x)=\frac{1}{(2\pi)^d}\int_{\mathbb{R}^d} e^{-\ii \langle z,x \rangle}\varphi_\e(z)\ud z \quad \textrm{ and } \quad f(x)=\frac{1}{(2\pi)^d}\int_{\mathbb{R}^d} e^{-\ii \langle z,x \rangle}\varphi(z)\ud z,
\end{split} 
\end{equation}
 see Proposition~2.5 Item~(xii) 
 in~\cite{Sato1999}.
Due to the boundeness of $f_{\e}$ and $f$, for all $p\in[1,\infty]$ we have
\begin{equation}
f_\e,f \in L^p(\mathbb{R}^d)\quad 
\textrm{ for each }\quad \e>0.
\end{equation}

\begin{proof}[Proof of Proposition~\ref{lem:fouriertv}]
Recall $f_\e$ and $f$ given in~\eqref{ec:densidades}.
For all $x\in \mathbb{R}^d$ we have
\begin{equation}\label{eq:split}
\begin{split}
(2\pi)^d|f_\e(x)-f(x)|&=\left|\int_{\mathbb{R}^d} \left(e^{-\ii \langle z,x \rangle}\varphi_\e(z)-e^{-\ii \langle z,x \rangle}\varphi(z)\right)\ud z \right|\\
&\lqq 
\int_{\mathbb{R}^d} \left|\varphi_\e(z)-\varphi(z)\right|\ud z \\
&=\int_{\{|z|\lqq \ell\}} \left|\varphi_\e(z)-\varphi(z)\right|\ud z+\int_{\{|z|> \ell\}} \left|\varphi_\e(z)-\varphi(z)\right|\ud z
\end{split} 
\end{equation}
for all $\ell>0$.
For each $\ell>0$,  Item~(i) of Theorem~15.24 in~\cite{Klenke2020} yields
\begin{equation}\label{eq:R}
\lim_{\e\to 0^+}\int_{\{|z|\lqq \ell\}} \left|\varphi_\e(z)-\varphi(z)\right|\ud z=0.
\end{equation}
By~\eqref{eq:split} and~\eqref{eq:R} it follows that
\begin{equation}\label{eq:sides}
\begin{split}
(2\pi)^d \limsup\limits_{\e \to 0^+}|f_\e(x)-f(x)|& \lqq \limsup\limits_{\e \to 0^+}\int_{\{|z|> \ell \}} \left|\varphi_\e(z)-\varphi(z)\right|\ud z\\
&\lqq 
\limsup\limits_{\e \to 0^+}\int_{\{|z|> \ell\}} \left|\varphi_\e(z)\right|\ud z+ \int_{\{|z|> \ell\}} \left|\varphi(z)\right|\ud z
\end{split}
\end{equation}
for $x\in \mathbb{R}^d$ and any $\ell>0$.
Since $\varphi\in L^1(\mathbb{R}^d)$, it follows that 
\begin{equation}\label{eq:cola}
\lim\limits_{\ell\to \infty}\int_{\{|z|> \ell\}} \left|\varphi(z)\right|\ud z=0.
\end{equation}
Sending $\ell$ to infinity both sides of~\eqref{eq:sides}, Hypothesis~\eqref{tail} 
and~\eqref{eq:cola} imply
\begin{equation*}
\begin{split}
\limsup\limits_{\e \to 0^+}|f_\e(x)-f(x)|=0\quad \textrm{ for }\quad x\in \mathbb{R}^d.
\end{split}
\end{equation*}
Hence, it follows that
\[
\lim\limits_{\e \to 0^+}f_\e(x)=f(x)\quad \textrm{ for }\quad x\in \mathbb{R}^d.
\]
The preceding limit with the help of Lemma~\ref{lem:scheffe} in Appendix~\ref{apendtv} 
implies~\eqref{eq:tvlimit}. The proof is complete.
\end{proof}

\begin{proposition}[Parseval--Plancherel approach to the convergence in the total variation distance]\label{prof:parseval}
Let $(X_\e)_{\e>0}$ and $X$ be as in  Proposition~\ref{lem:fouriertv}. Assume that the Item~(i) in Proposition~\ref{lem:fouriertv} holds true.
In addition, assume that
\begin{itemize}
\item[(ii')] $L^2(\mathbb{R}^d)$- integrability:
\begin{equation}\label{eq:L2}
\varphi_\e,\varphi \in L^2(\mathbb{R}^d)\quad \textrm{ for each }\quad \e>0,
\end{equation}
where $\varphi_\e$ and $\varphi$ denote the characteristic function of $X_\e$ and $X$, respectively.
\item[(iii')] Tail behavior ($L^2(\mathbb{R}^d)$-uniform integrability):
\begin{equation}\label{tailL2}
\limsup_{\ell \to \infty}\limsup_{\e\to 0^+}\int_{\{|z|\gqq \ell\}}|\varphi_\e(z)|^2\ud z=0.
\end{equation}
\end{itemize}
Then it follows that
\begin{equation}\label{eq:tvlimitL2}
\lim\limits_{\e\to 0^+} \tv(X_\e,X)=0.
\end{equation}
\end{proposition}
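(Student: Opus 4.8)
The plan is to mimic the proof of Proposition~\ref{lem:fouriertv}, replacing the Fourier inversion step by the Parseval--Plancherel $L^2(\mathbb{R}^d)$-isometry. First I would record that hypothesis~\eqref{eq:L2} already forces $X_\e$ and $X$ to be absolutely continuous with densities $f_\e,f\in L^2(\mathbb{R}^d)$. Unlike the $L^1$ situation in Proposition~\ref{lem:fouriertv}, this is not quite immediate, but it follows from a standard mollification argument: convolving $X_\e$ with a centred Gaussian of variance $\sigma^2>0$ produces a law whose characteristic function $e^{-\sigma^2|z|^2/2}\varphi_\e(z)$ lies in $L^1(\mathbb{R}^d)\cap L^2(\mathbb{R}^d)$, hence has an explicit bounded continuous density; letting $\sigma\to 0^+$ and using that $e^{-\sigma^2|z|^2/2}\varphi_\e\to\varphi_\e$ in $L^2(\mathbb{R}^d)$ together with the weak convergence of the mollified laws identifies the limit as an $L^2(\mathbb{R}^d)$ density of $X_\e$, and likewise for $X$. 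Granting this, $\tv(X_\e,X)=\tfrac12\int_{\mathbb{R}^d}|f_\e(x)-f(x)|\,\ud x$, so it is enough to prove $f_\e\to f$ in $L^1(\mathbb{R}^d)$.

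The heart of the matter is to show $\|\varphi_\e-\varphi\|_{L^2(\mathbb{R}^d)}\to 0$ as $\e\to 0^+$. Fixing $\ell>0$ and splitting $\mathbb{R}^d$ at $|z|=\ell$, on $\{|z|<\ell\}$ the integrand $|\varphi_\e(z)-\varphi(z)|^2$ is dominated by the constant $4$ on a set of finite measure and converges pointwise to $0$ (since $X_\e\stackrel{d}{\longrightarrow}X$, tested against $x\mapsto e^{\ii\langle z,x\rangle}$, gives $\varphi_\e(z)\to\varphi(z)$ for each $z$), so the Dominated Convergence Theorem yields $\int_{\{|z|<\ell\}}|\varphi_\e-\varphi|^2\,\ud z\to 0$ for every fixed $\ell$; on $\{|z|\gqq\ell\}$ I would just use $\int_{\{|z|\gqq\ell\}}|\varphi_\e-\varphi|^2\,\ud z\lqq 2\int_{\{|z|\gqq\ell\}}|\varphi_\e|^2\,\ud z+2\int_{\{|z|\gqq\ell\}}|\varphi|^2\,\ud z$. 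Combining the two pieces,
\[
\limsup_{\e\to 0^+}\int_{\mathbb{R}^d}|\varphi_\e-\varphi|^2\,\ud z\lqq 2\limsup_{\e\to 0^+}\int_{\{|z|\gqq\ell\}}|\varphi_\e|^2\,\ud z+2\int_{\{|z|\gqq\ell\}}|\varphi|^2\,\ud z ,
\]
and sending $\ell\to\infty$ kills the first term by hypothesis~\eqref{tailL2} and the second by $\varphi\in L^2(\mathbb{R}^d)$. Hence $\varphi_\e\to\varphi$ in $L^2(\mathbb{R}^d)$.

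By the Plancherel identity, with the normalisation fixed by~\eqref{ec:densidades}, $\|f_\e-f\|_{L^2(\mathbb{R}^d)}=(2\pi)^{-d/2}\|\varphi_\e-\varphi\|_{L^2(\mathbb{R}^d)}\to 0$. To upgrade this to the total variation conclusion~\eqref{eq:tvlimitL2} I would argue along subsequences: for any $\e_n\to 0^+$, the $L^2$-convergence of $f_{\e_n}$ to $f$ lets one extract a further subsequence along which $f_{\e_{n_k}}\to f$ Lebesgue-a.e., and Scheff\'e's Lemma (Lemma~\ref{lem:scheffe}) then gives $\tv(X_{\e_{n_k}},X)\to 0$; since every subsequence has such a sub-subsequence, $\lim_{\e\to 0^+}\tv(X_\e,X)=0$. (Alternatively, one can sidestep Scheff\'e by splitting $\|f_\e-f\|_{L^1(\mathbb{R}^d)}$ over a ball $B_R$ and its complement, bounding the first part by $|B_R|^{1/2}\|f_\e-f\|_{L^2(\mathbb{R}^d)}$ and the second by $\mathbb{P}(X_\e\notin B_R)+\mathbb{P}(X\notin B_R)$, which is uniformly small in $\e$ for large $R$ by tightness of the weakly convergent family.) Every step is a routine $L^2$-analogue of the proof of Proposition~\ref{lem:fouriertv}; the only point that genuinely requires extra care, and which I expect to be the main obstacle, is extracting the $L^2(\mathbb{R}^d)$ densities from the bare $L^2$-integrability of the characteristic functions — there is no longer the comfort of a continuous bounded density as in the $L^1$ case — with the tail control~\eqref{tailL2} being the other ingredient that must be actively used.
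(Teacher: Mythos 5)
Your proposal is correct and follows essentially the same route as the paper: the same $\ell$-splitting of $\|\varphi_\e-\varphi\|_{L^2(\mathbb{R}^d)}^2$ with dominated convergence on $\{|z|\lqq \ell\}$ and hypotheses (ii')--(iii') on the tail, then Plancherel, then the Cauchy--Schwarz-on-a-ball plus tightness upgrade from $L^2$ to $L^1$ (your parenthetical alternative is exactly the paper's final step). The only presentational difference is that the paper obtains the existence of the densities $f_\e$, $f$ by citing Lemma~9.2.8 of~\cite{Arturo} instead of running your (correct) Gaussian mollification argument.
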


\begin{remark}[$L^{2}(\mathbb{R}^d)$-convergence of the Fourier transform]\label{rem:L2}
Assume the hypothesis of Proposition~\ref{prof:parseval}.
Following step by step the proof of
Proposition~\ref{prof:parseval}
one can see that
\[
\lim\limits_{\e\to 0^+}\int_{\mathbb{R}^d} |\varphi_\e(z)-\varphi(z)|^2 \ud z=0.
\]
In addition, since $\varphi_{\e}$ and $\varphi$ are uniformly bounded by one, we have that Hypothesis (ii) and Hypothesis (iii) in Proposition~\ref{lem:fouriertv} implies Hypothesis (ii') and Hypothesis (iii') in Proposition~\ref{prof:parseval}. In general, Proposition~\ref{lem:fouriertv} implies the same result in $L^p(\mathbb R^d)$, for all $p\in(0,\infty)$. The reciprocal is in general false.
We stress that for the weaker assumption on the characteristic function
\begin{equation}
\varphi \in L^p(\mathbb{R}^d)\quad 
\textrm{ for some }\quad p>2,
\end{equation}
the
corresponding distribution function may be singular, see Theorem~13.4.2 in~\cite{Kawata}. Therefore, the   integrability assumption 
\eqref{eq:L2}  guarantees that the distribution function is absolutely continuous and hence it allows us to avoid singular distributions.
\end{remark}

\begin{remark}[Convergence in total variation distance without $L^{2}(\mathbb{R})$-integrability of the characteristic function]\label{rem:examplet}
While Proposition~\ref{lem:fouriertv}
and 
Proposition~\ref{prof:parseval}
provide sufficient conditions for convergence in the total variation distance, their conditions are not be strictly necessary. To illustrate this, we consider the following example.
Let $r\in (0,1/2)$ and $(\theta_n)_{n\in \mathbb{N}}$ be a sequence of positive numbers such that $\theta_n\to \theta>0$ as $n\to \infty$.
For each $n\in \mathbb N$ let $X_n\stackrel{d}{=}
 \Gamma(r,\theta_n)$ and $X\stackrel{d}{=} \Gamma(r,\theta)$, where $\Gamma(r,\vartheta)$ is the Gamma distribution having parameters $r>0$ and $\vartheta>0$ whose characteristic function is given in~\eqref{eq:deffourier}. That is, the corresponding densities are given, respectively, by $f_n(x)=\frac{\theta^r_n}{\Gamma(r)}x^{r-1}e^{-\theta_n x}\mathbf{1}_{(0,\infty)}(x)$ and $f(x)=\frac{\theta^r}{\Gamma(r)}x^{r-1}e^{-\theta x}\mathbf{1}_{(0,\infty)}(x)$.
By Scheff\'e's Lemma we have
\[
\lim\limits_{n\to\infty}\tv\left(X_n, X\right)=0.
\]
The corresponding characteristic functions are given by
\begin{equation}
\varphi_n(u)=\left(1-\ii \theta^{-1}_nu \right)^{-r},
\quad u\in \mathbb{R}
\end{equation}
and
\begin{equation}
\varphi(u)=\left(1-\ii \theta^{-1}u \right)^{-r},
\quad u\in \mathbb{R},
\end{equation}
respectively. Clearly, $\varphi_n,\varphi\not\in L^2(\mathbb{R})$ neither $\varphi_n,\varphi\not\in L^1(\mathbb{R})$.
\end{remark}

\begin{proof}[Proof of
Proposition~\ref{prof:parseval}]
Item~(ii') with the help of Lemma~9.2.8 in~\cite{Arturo} yields 
that the laws of $X_\e$ and $X$ are absolutely continuous with respect to the Lebesgue measure on $\mathbb{R}^d$, i.e., 
the existence of densities $f_\e$ and $f$ corresponding to $\varphi_\e$ and $\varphi$, respectively.
By Parseval--Plancherel Theorem we have
\begin{equation}\label{eq:nada}
\int_{\mathbb{R}^d} (f_\e(x)-f(x))^2 \ud x=
\int_{\mathbb{R}^d} |\varphi_\e(z)-\varphi(z)|^2 \ud z.
\end{equation}
We claim that the right-hand side of~\eqref{eq:nada} tends to zero as $\e\to 0^+$.
Indeed, let $\ell>0$ be fixed and note that
\begin{equation}
\int_{\mathbb{R}^d} |\varphi_\e(z)-\varphi(z)|^2 \ud z=
\int_{\{|z|\lqq \ell\}} |\varphi_\e(z)-\varphi(z)|^2 \ud z+\int_{\{|z|> \ell\}} |\varphi_\e(z)-\varphi(z)|^2 \ud z
\end{equation}
For each $\ell>0$,  Item~(i) of Theorem~15.24 in~\cite{Klenke2020} with the help of Item~(i) gives
\begin{equation}\label{eq:RL2}
\lim_{\e\to 0^+}\int_{\{|z|\lqq \ell\}} \left|\varphi_\e(z)-\varphi(z)\right|^2\ud z=0.
\end{equation}
Since
\begin{equation}
\begin{split}
\limsup_{\e\to 0^+}\int_{\{|z|> \ell\}} |\varphi_\e(z)-\varphi(z)|^2 \ud z
& \lqq 
2\limsup_{\e\to 0^+}\int_{\{|z|> \ell\}} |\varphi_\e(z)|^2 \ud z\\
&\qquad+2\int_{\{|z|> \ell\}} |\varphi(z)|^2 \ud z
\end{split}
\end{equation}
and $\varphi\in L^2(\mathbb{R}^d)$,
we have 
\begin{equation}
\begin{split}
\limsup\limits_{\ell\to \infty}\limsup_{\e\to 0^+}\int_{\{|z|> \ell\}} |\varphi_\e(z)-\varphi(z)|^2 \ud z
& \lqq 
2\limsup\limits_{\ell\to \infty}\limsup_{\e\to 0^+}\int_{\{|z|> \ell\}} |\varphi_\e(z)|^2 \ud z.
\end{split}
\end{equation}
Hence Item~(iii') gives
\begin{equation}
\lim\limits_{\ell\to \infty}\lim_{\e\to 0^+}\int_{\{|z|> \ell\}} |\varphi_\e(z)-\varphi(z)|^2 \ud z=0.
\end{equation}
By~\eqref{eq:nada} we obtain
\begin{equation}\label{eq:nadadosge}
\lim_{\e\to 0^+}\int_{\mathbb{R}^d} (f_\e(x)-f(x))^2 \ud x=0.
\end{equation}
To conclude we show that the preceding limit yields
\begin{equation}\label{eq:nadados}
\lim_{\e\to 0^+}\int_{\mathbb{R}^d} |f_\e(x)-f(x)|\ud x=0.
\end{equation}
Indeed,~\eqref{eq:nadadosge} with the help of
the Cauchy--Schwarz inequality gives 
for any $K>0$ 
\begin{equation}\label{eq:nadatres}
\lim_{\e\to 0^+}\int_{\{|x|\lqq K\}} |f_\e(x)-f(x)|\ud x=0.
\end{equation}
Recall that $f_\e$ and $f$ are the densities of the random vectors $X_\e$ and $X$, respectively.
Since $f\in L^1(\mathbb{R}^d)$ and $X_\e\to X$ in distribution as $\e\to 0^+$, we have
\begin{equation}\label{eq:nadacuatro}
\begin{split}
&\limsup_{K\to \infty}\limsup_{\e\to 0^+}\int_{\{|x|> K\}} |f_\e(x)-f(x)|\ud x\lqq 
\limsup_{K\to \infty}\limsup_{\e\to 0^+}\int_{\{|x|\gqq K\}} f_\e(x)\ud x\\
&\lqq 
\limsup_{K\to \infty}\limsup_{\e\to 0^+}\mathbb{P}(\|X_\e\|\gqq  K)\lqq 
\limsup_{K\to \infty}\mathbb{P}(\|X\|\gqq  K)=0.
\end{split}
\end{equation}
By~\eqref{eq:nadatres} 
and~\eqref{eq:nadacuatro} we 
deduce~\eqref{eq:nadados}.
\end{proof}

\section{\textbf{Scaling property and local central limit theorem for Gamma distribution}}\label{apen:gamma}

In this section, we show a scaling property for the second parameter of the Gamma distribution and a Gaussian local central limit theorem. We recall that for any positive parameters $\alpha$ and $\theta$, $\Gamma(\alpha,\theta)$ denotes a Gamma distribution with characteristic function given 
by~\eqref{eq:deffourier}. 

\begin{lemma}[Scaling property of the Gamma distribution]\label{lem:scapropgamma}
For any positive constants $\alpha$ and $\theta$ it follows that 
$\Gamma(\alpha,\theta)\stackrel{d}{=}\theta^{-1}\Gamma(\alpha,1)$.
\end{lemma}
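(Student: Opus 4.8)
The plan is to compare characteristic functions and invoke their uniqueness. Recall from~\eqref{eq:deffourier} that a random variable has law $\Gamma(\alpha,\theta)$ precisely when its characteristic function equals $u\mapsto (1-\ii\theta^{-1}u)^{-\alpha}$. So it suffices to take $Z$ with $Z\stackrel{d}{=}\Gamma(\alpha,1)$ and verify that $\theta^{-1}Z$ has the characteristic function associated with $\Gamma(\alpha,\theta)$.

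Concretely, first I would compute, for $u\in\mathbb{R}$,
\[
\mathbb{E}\!\left[e^{\ii u\,\theta^{-1}Z}\right]=\mathbb{E}\!\left[e^{\ii(\theta^{-1}u)Z}\right]=\left(1-\ii(\theta^{-1}u)\right)^{-\alpha}=\left(1-\ii\theta^{-1}u\right)^{-\alpha},
\]
where the middle equality uses~\eqref{eq:deffourier} applied to $Z\stackrel{d}{=}\Gamma(\alpha,1)$ with the argument $\theta^{-1}u$ in place of $u$ (here $\theta>0$, so $\theta^{-1}u$ ranges over all of $\mathbb{R}$ as $u$ does). The right-hand side is exactly the characteristic function of $\Gamma(\alpha,\theta)$.

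Finally, since a probability measure on $\mathbb{R}$ is uniquely determined by its characteristic function, the equality of characteristic functions forces $\theta^{-1}Z\stackrel{d}{=}\Gamma(\alpha,\theta)$, which is the claim $\Gamma(\alpha,\theta)\stackrel{d}{=}\theta^{-1}\Gamma(\alpha,1)$. There is essentially no obstacle here: the only point requiring a word is that the substitution $u\mapsto\theta^{-1}u$ is legitimate because $\theta>0$ and~\eqref{eq:deffourier} holds for all real arguments; everything else is the elementary scaling rule $\mathbb{E}[e^{\ii u(cZ)}]=\mathbb{E}[e^{\ii(cu)Z}]$ together with the uniqueness theorem for characteristic functions.
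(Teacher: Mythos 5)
Your proposal is correct and follows essentially the same route as the paper: both compute the characteristic function of $\theta^{-1}\Gamma(\alpha,1)$ via the scaling rule, match it with the formula in~\eqref{eq:deffourier} for $\Gamma(\alpha,\theta)$, and conclude by uniqueness of characteristic functions. No gaps.
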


\begin{proof}
For any $u\in \mathbb{R}$,~\eqref{eq:deffourier} yields
\begin{equation}\label{eq:chf}
\mathbb{E}\left[e^{\ii u (\theta^{-1}\Gamma(\alpha,1))}\right]=
\mathbb{E}\left[e^{\ii (u \theta^{-1})\Gamma(\alpha,1)}\right]=\left(1-\ii u\theta^{-1} \right)^{-\alpha}=\mathbb{E}\left[e^{\ii u \Gamma(\alpha,\theta)}\right].
\end{equation}

Recall that the distribution of a random variable is characterized by its characteristic function, see for instance Theorem~15.9 
in~\cite{Klenke2020}. 
Then~\eqref{eq:chf} implies the statement.
\end{proof}

\begin{lemma}[Local central limit theorem]\label{lem:lclt} 
For any positive $\alpha$ set
$\mathcal{Z}_\alpha:=\frac{\Gamma(\alpha,1)-\alpha}{\sqrt{\alpha}}$.
Then $\mathcal{Z}_\alpha$ converges in the total variation distance to a standard Gaussian distribution as $\alpha \to \infty$.
\end{lemma}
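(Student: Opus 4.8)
The plan is to invoke the Fourier criterion for total-variation convergence, Proposition~\ref{lem:fouriertv}, applied to the family $\mathcal{Z}_\alpha$ (formally, one sets $\e:=1/\alpha$ and $X_\e:=\mathcal{Z}_{1/\e}$, so that $\e\to 0^+$ means $\alpha\to\infty$; since $L^1$-integrability of the Fourier transform below forces $\alpha>1$, we restrict to that range, which is harmless for a limit statement). By~\eqref{eq:deffourier} the characteristic function of $\Gamma(\alpha,1)$ is $u\mapsto(1-\ii u)^{-\alpha}$, whence the characteristic function of $\mathcal{Z}_\alpha$ is
\[
\varphi_\alpha(z)=e^{-\ii z\sqrt{\alpha}}\Bigl(1-\frac{\ii z}{\sqrt{\alpha}}\Bigr)^{-\alpha},\qquad z\in\mathbb{R},\qquad |\varphi_\alpha(z)|=\Bigl(1+\frac{z^2}{\alpha}\Bigr)^{-\alpha/2},
\]
to be compared with $\varphi(z)=e^{-z^2/2}$, the characteristic function of $\mathcal{G}$.

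For hypothesis~(i) (convergence in distribution) I would expand, for fixed $z$, $\log\varphi_\alpha(z)=-\ii z\sqrt{\alpha}-\alpha\log(1-\ii z/\sqrt{\alpha})$ using $\log(1-w)=-w-w^2/2+O(w^3)$ with $w=\ii z/\sqrt{\alpha}$; this gives $\log\varphi_\alpha(z)=-z^2/2+O(\alpha^{-1/2})\to-z^2/2$, so $\varphi_\alpha(z)\to e^{-z^2/2}$ pointwise and $\mathcal{Z}_\alpha\stackrel{d}{\longrightarrow}\mathcal{G}$ by L\'evy's continuity theorem (one may instead simply cite the classical central limit theorem for Gamma laws). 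Hypothesis~(ii) (Fourier integrability) is immediate: $\varphi\in L^1(\mathbb{R})$, and the substitution $z=\sqrt{\alpha}\,u$ gives $\int_{\mathbb{R}}|\varphi_\alpha(z)|\,\ud z=\sqrt{\alpha}\int_{\mathbb{R}}(1+u^2)^{-\alpha/2}\,\ud u<\infty$ for every $\alpha>1$.

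The crux is hypothesis~(iii), the uniform tail bound~\eqref{tail}. I would split the integral over $\{|z|\geq\ell\}$ according to $\{\ell\leq|z|\leq\sqrt{\alpha}\}$ and $\{|z|>\sqrt{\alpha}\}$. On the first region, the elementary inequality $\log(1+t)\geq t/2$ for $t\in[0,1]$ (both sides vanish at $0$, and the left-hand derivative $1/(1+t)$ dominates $1/2$ there) applied with $t=z^2/\alpha$ yields the $\alpha$-free integrable majorant $|\varphi_\alpha(z)|\leq e^{-z^2/4}$. On the second region, $z=\sqrt{\alpha}\,u$ together with $(1+u^2)^{-\alpha/2}\leq 2^{-(\alpha-4)/2}(1+u^2)^{-2}$ for $|u|>1$ and $\alpha\geq4$ gives $\int_{|z|>\sqrt{\alpha}}|\varphi_\alpha(z)|\,\ud z\leq\tfrac{\pi}{2}\sqrt{\alpha}\,2^{-(\alpha-4)/2}\to0$ as $\alpha\to\infty$. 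Hence $\limsup_{\alpha\to\infty}\int_{|z|\geq\ell}|\varphi_\alpha(z)|\,\ud z\leq\int_{|z|\geq\ell}e^{-z^2/4}\,\ud z$, and letting $\ell\to\infty$ establishes~\eqref{tail}. Proposition~\ref{lem:fouriertv} then delivers $\tv(\mathcal{Z}_\alpha,\mathcal{G})\to0$. A more computational alternative would be to write the density of $\mathcal{Z}_\alpha$ explicitly from that of $\Gamma(\alpha,1)$, show via Stirling's formula that it converges pointwise to $\tfrac{1}{\sqrt{2\pi}}e^{-z^2/2}$, and conclude with Scheff\'e's Lemma~\ref{lem:scheffe}. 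I expect the inequality manipulations producing an $\alpha$-independent integrable majorant on the intermediate range to be the only genuinely non-routine step.
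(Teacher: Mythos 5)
Your proof is correct, but it takes a genuinely different route from the paper. You verify the three hypotheses of Proposition~\ref{lem:fouriertv} for the characteristic functions $\varphi_\alpha(z)=e^{-\ii z\sqrt{\alpha}}(1-\ii z/\sqrt{\alpha})^{-\alpha}$: the pointwise limit via the complex logarithm expansion is fine, the $L^1$-bound for $\alpha>1$ is fine, and your treatment of the uniform tail condition is sound --- the inequality $\log(1+t)\gqq t/2$ on $[0,1]$ does give the $\alpha$-free majorant $e^{-z^2/4}$ on $\{\ell\lqq|z|\lqq\sqrt{\alpha}\}$, and the bound $\tfrac{\pi}{2}\sqrt{\alpha}\,2^{-(\alpha-4)/2}\to 0$ handles $\{|z|>\sqrt{\alpha}\}$. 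The paper instead follows exactly the ``computational alternative'' you mention in passing: it writes the density $g_\alpha(z)=\frac{\sqrt{\alpha}}{\Gamma(\alpha)}(\sqrt{\alpha}z+\alpha)^{\alpha-1}e^{-\sqrt{\alpha}z-\alpha}\mathbf{1}_{\{z\gqq-\sqrt{\alpha}\}}$, applies Stirling's formula together with the second-order expansion of Lemma~\ref{lem:logconv} to obtain $g_\alpha(z)\to\frac{1}{\sqrt{2\pi}}e^{-z^2/2}$ pointwise, and concludes by Scheff\'e's Lemma~\ref{lem:scheffe}. The paper's route is shorter because Scheff\'e converts pointwise density convergence directly into total variation convergence with no uniform integrability to check; your route avoids Stirling entirely, is more in the spirit of the Fourier machinery the paper advertises (and uses for Lemma~\ref{lem:profile}), and produces quantitative tail control on the characteristic functions as a by-product. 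Both are complete proofs.
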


\begin{proof}
Let $\alpha>0$ be fixed and let $g_\alpha$ be the density of the random variable $\mathcal{Z}_\alpha$. In other words,
\begin{equation}\label{e:y0}
g_\alpha(z)=
\frac{\sqrt{\alpha}}{\Gamma(\alpha)}(\sqrt{\alpha}z+\alpha)^{\alpha-1}e^{-\sqrt{\alpha}z-\alpha}\mathbf{1}_{\{z\gqq -\sqrt{\alpha}\}},
\end{equation}
where $\Gamma$ denotes the usual Gamma function.
Recall that, for short, we write $F \stackrel{\alpha\to \infty}{\sim} G$ in a place of $\lim\limits_{\alpha \to \infty} \frac{F(\alpha)}{G(\alpha)} = 1$.
The Stirling formula for the function $\Gamma$ yields
\begin{equation*}
\Gamma(\alpha)\stackrel{\alpha\to \infty}{\sim}\sqrt{2\pi (\alpha-1)} \left( \frac{\alpha-1}{e} \right)^{\alpha-1}.
\end{equation*} 
Then we have 
\begin{equation}\label{e:y1}
\frac{\sqrt{\alpha}}{\Gamma(\alpha)}\stackrel{\alpha\to \infty}{\sim}
\frac{1}{\sqrt{2\pi}}\frac{\sqrt{\alpha}}{\sqrt{\alpha-1}}
\left( \frac{e}{\alpha-1} \right)^{\alpha-1}\stackrel{\alpha\to \infty}{\sim} \frac{1}{\sqrt{2\pi}}
\left( \frac{e}{\alpha-1} \right)^{\alpha-1}.
\end{equation}
Note that 
\begin{equation}\label{e:y2}
e\stackrel{\alpha\to \infty}{\sim}\frac{1}{\left(1-\frac{1}{\alpha} \right)^{\alpha-1}}\stackrel{\alpha\to \infty}{\sim}\left(\frac{\alpha}{\alpha-1} \right)^{\alpha-1}.
\end{equation}
Hence, for any $z\in \mathbb{R}$ fixed, combining~\eqref{e:y1},~\eqref{e:y2} in~\eqref{e:y0}  yields
\begin{equation}\label{eq:ju1}
\begin{split}
g_\alpha(z)&\stackrel{\alpha\to \infty}{\sim}
\frac{e^{-1}}{\sqrt{2\pi}}
\left(\frac{\alpha}{\alpha-1} \right)^{\alpha-1}\left(1+\frac{\sqrt{\alpha}z}{\alpha} \right)^{-1}
\left(1+\frac{\sqrt{\alpha}z}{\alpha} \right)^{\alpha}e^{-\sqrt{\alpha}z}\mathbf{1}_{\{z\gqq -\sqrt{\alpha}\}}\\
&\stackrel{\alpha\to \infty}{\sim}
\frac{1}{\sqrt{2\pi}}
\left(1+\frac{z}{\sqrt{\alpha}} \right)^{\alpha}e^{-\sqrt{\alpha}z}.
\end{split}
\end{equation}
By Lemma~\ref{lem:logconv} in Appendix~\ref{ap:tools} we have,
\begin{equation}\label{eq:ju2}
\left(1+\frac{z}{\sqrt{\alpha}} \right)^{\alpha}e^{-\sqrt{\alpha}z}\stackrel{\alpha \to \infty}{\sim} e^{-z^2/2}.
\end{equation}
Consequently, for any $z\in \mathbb{R}$~\eqref{eq:ju1} 
and~\eqref{eq:ju2} imply
\[
\lim\limits_{\alpha\to \infty}g_\alpha(z)=
\frac{1}{\sqrt{2\pi}}e^{-z^2/2},
\]
which with the help of Lemma~\ref{lem:scheffe} in Appendix~\ref{apendtv} gives the statement.
\end{proof}

\section{\textbf{Ergodicity and characteristic function for the  SDE (\ref{SDE})
}}\label{apenddist}

In the section, we recall the strong ergodicity (total variation convergence) and the explicit formulas for the characteristic function and the moment generating function of~\eqref{SDE}. 
We recall that
\begin{equation*}
\begin{split}
q_\e+1:=\frac{2\fb}{\e^2},\quad c_\e(t):=\frac{2\fa}{\e^2}\frac{1}{1-e^{-\fa t}}
\quad \textrm{ for any }\quad t>0 
\quad \textrm{ and }\quad
c_\e(\infty):=\frac{2\fa}{\e^2}.
\end{split}
\end{equation*} 

\begin{lemma}[Ergodicity for the CIR model in the Feller regime]\label{lem:ergodicity}
For any $\e\in (0,\sqrt{2\fb})$ and $x\in [0,\infty)$ let $(X^\e_t(x))_{t\gqq 0}$ be the unique strong solution 
of~\eqref{SDE}.
Then there exists a random variable $X^\e_\infty$ such that for any, $x\in [0,\infty)$ 
it follows that
\begin{equation}\label{eq:TVlem}
\lim\limits_{t\to \infty}\tv(X^\e_t(x),X^\e_\infty)=0
\end{equation}
and 
\begin{equation}\label{eq:WAlem}
\lim\limits_{t\to \infty}\mathcal{W}_p(X^\e_t(x),X^\e_\infty)=0.
\end{equation}
In addition, $X^{\e}_\infty\stackrel{d}{=}\Gamma(q_\e+1,c_\e(\infty))$,
where $q_\e+1$ and $c_\e(\infty)$ are defined in~\eqref{eq:qepsx0}.
\end{lemma}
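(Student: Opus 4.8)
The plan is to reduce everything to the explicit characteristic function of the marginal $X^\e_t(x)$ recorded in Corollary~\ref{characteristic function}, namely $\mathbb{E}[e^{\ii z X^\e_t(x)}]=\psi(z;t,\e,x)$ with $\psi$ as in~\eqref{eq:exponente}, and to exploit that the Feller condition $\e^2<2\fb$ is equivalent to $q_\e+1=\tfrac{2\fb}{\e^2}>1$. First I would let $t\to\infty$ in~\eqref{eq:exponente}: since $e^{-\fa t}\to 0$ and $c_\e(t)\downarrow c_\e(\infty)$, the two exponential factors there tend to $1$ pointwise in $z$, so
\[
\lim_{t\to\infty}\psi(z;t,\e,x)=\Bigl(1-\ii\tfrac{z}{c_\e(\infty)}\Bigr)^{-(q_\e+1)}\qquad\text{for every }z\in\mathbb{R},
\]
which by~\eqref{eq:deffourier} is the characteristic function of $\Gamma(q_\e+1,c_\e(\infty))$. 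L\'evy's continuity theorem then gives convergence in distribution of $X^\e_t(x)$, as $t\to\infty$, to a random variable $X^\e_\infty$ with $X^\e_\infty\stackrel{d}{=}\Gamma(q_\e+1,c_\e(\infty))$, and the limit law is independent of $x$; this settles the last assertion.

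To obtain~\eqref{eq:TVlem} I would invoke Proposition~\ref{lem:fouriertv}, applied with the running index $\e\to 0^+$ there replaced by $t\to\infty$ (with $\e$ and $x$ frozen), for the family $(X^\e_t(x))_{t\gqq 1}$ with limit $X^\e_\infty$. Item~(i) of that proposition is the convergence in distribution just established. For Item~(ii), from~\eqref{eq:exponente} one reads off the bound $|\psi(z;t,\e,x)|\lqq(1+z^2/c_\e^2(t))^{-(q_\e+1)/2}$ (the factor $\exp(-z^2 c_\e(t)xe^{-\fa t}/(c_\e^2(t)+z^2))$ is at most $1$ and the other exponential has modulus $1$), and since $q_\e+1>1$ this dominating function, and likewise the limiting Gamma characteristic function, lies in $L^1(\mathbb{R})$. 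For Item~(iii), the monotonicity $t\mapsto c_\e(t)$ decreasing gives $|\psi(z;t,\e,x)|\lqq(1+z^2/c_\e^2(1))^{-(q_\e+1)/2}=:h(z)$ for all $t\gqq 1$ with $h\in L^1(\mathbb{R})$, whence the uniform tail condition follows from $\lim_{\ell\to\infty}\int_{\{|z|\gqq\ell\}}h(z)\,\ud z=0$. As an alternative I could instead use Scheff\'e's Lemma~\ref{lem:scheffe}, since the CIR transition density is explicit in terms of the modified Bessel function $I_{q_\e}$ and the small-argument asymptotics $I_{q_\e}(w)\sim(w/2)^{q_\e}/\Gamma(q_\e+1)$ identify its pointwise $t\to\infty$ limit with the $\Gamma(q_\e+1,c_\e(\infty))$ density, see~\cite{Coxetalt1985,Feller1951}.

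For the Wasserstein convergence~\eqref{eq:WAlem} I would follow the same pattern as in the proof of Lemma~\ref{lem:lcltxW}: given convergence in distribution, Theorem~7.12 in~\cite{Villani2009} reduces the claim to uniform integrability of the $p$-th powers, $\lim_{N\to\infty}\sup_{t\gqq 1}\mathbb{E}[(X^\e_t(x))^p\mathbf{1}_{\{X^\e_t(x)\gqq N\}}]=0$. Using the explicit moment generating function of $X^\e_t(x)$ (obtained from~\eqref{eq:exponente}, finite and uniformly bounded on a fixed neighbourhood of $0$ for $t\gqq 1$ because $c_\e(t)\gqq c_\e(\infty)$) together with the elementary inequality $y^p\lqq C_{\lambda,p}e^{\lambda y}$ for $y\gqq 0$, one bounds $\mathbb{E}[(X^\e_t(x))^p\mathbf{1}_{\{X^\e_t(x)\gqq N\}}]\lqq C_{\lambda,p}e^{-\lambda N}\sup_{t\gqq 1}\mathbb{E}[e^{2\lambda X^\e_t(x)}]$, which vanishes as $N\to\infty$; this yields~\eqref{eq:WAlem} for every $p>0$.

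The main obstacle, though a mild one, is securing the uniformity in $t$ required in Item~(iii) of Proposition~\ref{lem:fouriertv} and in the uniform-integrability estimate; both are handled by discarding small times and using the monotonicity $c_\e(t)\downarrow c_\e(\infty)$, which is harmless since $t\to\infty$. Conceptually, the Feller assumption $\e<\sqrt{2\fb}$ enters exactly at this point, through $q_\e+1>1$, which is what makes the characteristic functions integrable (equivalently, the transition and invariant densities bounded) and hence permits upgrading weak convergence to convergence in total variation.
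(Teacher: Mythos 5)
Your proposal is correct and follows essentially the route the paper itself indicates: the paper only cites Theorem~1.2 of~\cite{Jinetalt2019} for~\eqref{eq:TVlem} while remarking that the Fourier method of Proposition~\ref{lem:fouriertv} gives an alternative proof and that~\eqref{eq:WAlem} follows from Theorem~7.12 in~\cite{Villani2009}, and you carry out exactly that program (pointwise limit of $\psi(z;t,\e,x)$, $L^1$-integrability and uniform tails from $q_\e+1>1$ and $c_\e(t)\downarrow c_\e(\infty)$, and uniform integrability of $p$-th moments via the moment generating function). The details you supply are sound.
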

The proof of~\eqref{eq:TVlem} can be found for instance in Theorem~1.2 ]
of~\cite{Jinetalt2019}.
Using the Fourier method in Proposition~\ref{lem:fouriertv} in Appendix~\ref{apendtv}, an alternative proof of~\eqref{eq:TVlem} can be obtained.
In addition, the convergence in~\eqref{eq:WAlem} can be deduced using Theorem~7.12 in~\cite{Villani2009}.

\begin{lemma}[Analytic continuation of the characteristic function for the CIR process]\label{moment generating function}
For any $\e\in (0,\infty)$ and $x\in [0,\infty)$ let $(X^\e_t(x))_{t\gqq 0}$ be the unique strong solution 
of~\eqref{SDE}.
For each $t>0$ the complex characteristic function of the time marginal $X^\e_t(x)$
is given by 
\begin{equation}\label{eq:mgfX2}
\mathbb{E}\left[e^{z X_t^\e(x)}\right]=
\frac{1}{\left(1-\frac{z}{c_\e(t)}\right)^{\frac{2\fb}{\e^2}}}
\exp \left(\frac{z c_\e(t)xe^{-\fa t}}{c_\e(t)-z}\right)\mathbf{1}_{\left\{ \mathsf{Re}(z)< c_\e(t)\right\}},
\end{equation}
where  $c_\e(t)=\frac{2\fa}{\e^2}\frac{1}{(1-e^{-\fa t})}$ and $\mathsf{Re}(z)$ denotes the real part of the complex number $z$.
In other words,
\begin{equation}\label{eq:formula}
\mathbb{E}\left[e^{z X_t^\e(x)}\right]=\frac{1}{\left(1-\frac{z}{c_\e(t)}\right)^{\frac{2\fb}{\e^2}}}
\exp \left(\frac{z c^2_\e(t)xe^{-\fa t}}{c^2_\e(t)-z^2}\right)\exp \left(\frac{z^2 c_\e(t)xe^{-\fa t}}{c^2_\e(t)-z^2}\right)\mathbf{1}_{\left\{\mathsf{Re}(z)<c_\e(t)\right\}}.
\end{equation}
\end{lemma}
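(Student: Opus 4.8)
The plan is to establish the formula~\eqref{eq:mgfX2} first for real arguments $z=u$ in the open interval $(-\infty,c_\e(t))$, exploiting the affine structure of~\eqref{SDE}, and then to upgrade it to the complex half-plane $\Omega_t:=\{z\in\mathbb{C}:\mathsf{Re}(z)<c_\e(t)\}$ by analytic continuation. For the real case, fix $t>0$ and $u<c_\e(t)$ and seek continuously differentiable $\psi,\phi:[0,t]\to\mathbb{R}$ with terminal data $\psi(t)=u$ and $\phi(t)=0$ such that $M_s:=\exp\!\big(\phi(s)+\psi(s)X^\e_s(x)\big)$, $s\in[0,t]$, is a martingale. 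Applying It\^o's formula and using $\ud\langle X^\e(x)\rangle_s=\e^2 X^\e_s(x)\,\ud s$, the drift of $M$ equals $M_s\big(\phi'(s)+\fb\psi(s)+(\psi'(s)-\fa\psi(s)+\tfrac{\e^2}{2}\psi(s)^2)X^\e_s(x)\big)$, which vanishes identically precisely when $\psi$ solves the Riccati equation $\psi'=\fa\psi-\tfrac{\e^2}{2}\psi^2$ and $\phi'=-\fb\psi$. The Riccati equation is separable and integrates in closed form; using $\tfrac{\e^2}{2}(1-e^{-\fa t})=\fa/c_\e(t)$, a direct computation yields $\psi(0)=\dfrac{u\,c_\e(t)e^{-\fa t}}{c_\e(t)-u}$ and, after one further quadrature, $\phi(0)=-\dfrac{2\fb}{\e^2}\ln\!\big(1-\tfrac{u}{c_\e(t)}\big)$. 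Taking expectations in $M_t=M_0$ then gives~\eqref{eq:mgfX2} at $z=u$. Alternatively, one may invoke the classical representation of the law of $X^\e_t(x)$ as a scaled noncentral chi-square distribution (Feller~\cite{Feller1951}, Cox--Ingersoll--Ross~\cite{Coxetalt1985}) and read off its moment generating function on its domain of convergence.

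The delicate point in this step, and the one I expect to be the main obstacle, is the passage from the local-martingale property of $M$ to a genuine martingale, i.e. the assertion that $u\mapsto\mathbb{E}[e^{uX^\e_t(x)}]$ is finite exactly on $(-\infty,c_\e(t))$. Being a non-negative local martingale, $M$ is a supermartingale, so $\mathbb{E}[e^{uX^\e_t(x)}]\lqq e^{\phi(0)+\psi(0)x}<\infty$ as soon as the explicit solution $\psi$ stays finite on $[0,t]$; the closed form of $\psi$ shows that this happens precisely when $u<c_\e(t)$, and moreover that $\psi(s)<c_\e(s)$ throughout $[0,t]$. The latter bound, together with the finiteness of $\mathbb{E}[e^{vX^\e_s(x)}]$ for $v<c_\e(s)$, provides the uniform integrability of the stopped martingales needed to conclude $\mathbb{E}[M_t]=M_0$, and hence equality in~\eqref{eq:mgfX2} for real $u$.

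For the analytic continuation I would observe that both sides of~\eqref{eq:mgfX2} are holomorphic on the connected open set $\Omega_t$. On the right, $1-z/c_\e(t)$ has positive real part for $z\in\Omega_t$, so the principal branch of $w\mapsto w^{-2\fb/\e^2}$ is holomorphic there, and $c_\e(t)-z$ does not vanish on $\Omega_t$, so the exponential factor is holomorphic too. On the left, $|e^{zX^\e_t(x)}|=e^{\mathsf{Re}(z)X^\e_t(x)}$ is integrable for $z\in\Omega_t$ by the real-argument step, and $z\mapsto\mathbb{E}[e^{zX^\e_t(x)}]$ is holomorphic on $\Omega_t$ by Morera's theorem together with Fubini's theorem (the integrand is entire in $z$ pathwise and dominated on compact subsets of $\Omega_t$ by an integrable function), or equivalently by differentiating under the integral sign. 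Since the two holomorphic functions agree on the real ray $(-\infty,c_\e(t))$, which has an accumulation point in $\Omega_t$, the identity theorem forces them to agree on all of $\Omega_t$, establishing~\eqref{eq:mgfX2}; restricting to $z=\ii u$ recovers the characteristic function used in~\eqref{eq:exponente}. Finally,~\eqref{eq:formula} follows from~\eqref{eq:mgfX2} by the algebraic identity $\dfrac{z}{c_\e(t)-z}=\dfrac{z\,c_\e(t)+z^{2}}{c_\e^{2}(t)-z^{2}}$, which splits the single exponential into the product of the two displayed exponentials.
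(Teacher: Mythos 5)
Your proof is correct and is essentially the argument behind the paper's own treatment: the paper gives no proof of this lemma but cites Proposition~1.2.4 of Alfonsi, whose derivation is exactly the exponential-affine ansatz you set up, with the Riccati system $\psi'=\fa\psi-\tfrac{\e^2}{2}\psi^2$, $\phi'=-\fb\psi$ solved backwards from $\psi(t)=u$, $\phi(t)=0$, followed by the extension to complex arguments with $\mathsf{Re}(z)<c_\e(t)$. Your computed values $\psi(0)=\frac{u\,c_\e(t)e^{-\fa t}}{c_\e(t)-u}$ and $\phi(0)=-\frac{2\fb}{\e^2}\ln\bigl(1-\tfrac{u}{c_\e(t)}\bigr)$ check out, your handling of the local-martingale-to-martingale step (the genuinely delicate point) is adequate, and the identity $\frac{z}{c_\e(t)-z}=\frac{z\,c_\e(t)+z^{2}}{c_\e^{2}(t)-z^{2}}$ does yield~\eqref{eq:formula} from~\eqref{eq:mgfX2}.
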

The proof can be found in Proposition~1.2.4 p.~7 of~\cite{Alfonsi}.

\begin{corollary}[Characteristic function for the CIR process]\label{characteristic function}
For any $\e\in (0,\infty)$ and $x\in [0,\infty)$ let $(X^\e_t(x))_{t\gqq 0}$ be the unique strong solution 
of~\eqref{SDE}.
For any $z\in \mathbb{R}$, 
the characteristic function and the moment generating function of the time marginal $X^\e_t(x)$
are given respectively by 
\begin{equation}\label{eq:chacfunct}
\begin{split}
\psi(z;t,\e, x)&:=\mathbb{E}\left[e^{\ii z X_t^{\e}(x)}\right]\\
&=\frac{1}{\left(1-\ii\frac{z}{c_\e(t)}\right)^{\frac{2\fb}{\e^2}}} \exp \left(\ii\frac{z c^2_\e(t)xe^{-\fa t}}{c^2_\e(t)+z^2}\right)\exp \left(-\frac{z^2 c_\e(t)xe^{-\fa t}}{c^2_\e(t)+z^2}\right)
\end{split}
\end{equation}
 and
\begin{equation}\label{eq:mgfX}
\begin{split}
\phi(z;t,\e, x)&:=\mathbb{E}\left[e^{z X_t^\e(x)}\right]\\
&=
\frac{1}{\left(1-\frac{z}{c_\e(t)}\right)^{\frac{2\fb}{\e^2}}}
\exp \left(\frac{z c^2_\e(t)xe^{-\fa t}}{c^2_\e(t)-z^2}\right)\exp \left(\frac{z^2 c_\e(t)xe^{-\fa t}}{c^2_\e(t)-z^2}\right)\mathbf{1}_{\left\{ z< c_\e(t)\right\}}.
\end{split}
\end{equation}
Moreover, 
\begin{equation*}
|\psi(z;t,\e,x)|\lqq \frac{1}{\left(1+\frac{z^2}{c^2_\e(t)}\right)^{\frac{\fb}{\e^2}}}\quad \textrm{ for any }\quad z\in \mathbb{R},
\end{equation*} 
which yields 
\begin{equation*}
\int_{\mathbb{R}}|\psi(z;t,\e,x)|\ud z<\infty\quad \textrm{ when }\quad  \e\in (0,\sqrt{2\fb}).
\end{equation*}
\end{corollary}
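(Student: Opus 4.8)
The plan is to obtain Corollary~\ref{characteristic function} as an immediate consequence of the analytic continuation formula in Lemma~\ref{moment generating function} by specialising the complex argument. First I would record the moment generating function~\eqref{eq:mgfX}: since Lemma~\ref{moment generating function} already provides the closed form~\eqref{eq:formula} for every complex $z$ with $\mathsf{Re}(z)<c_\e(t)$, restricting to real $z$ with $z<c_\e(t)$ amounts to nothing more than rewriting $\mathbf{1}_{\{\mathsf{Re}(z)<c_\e(t)\}}$ as $\mathbf{1}_{\{z<c_\e(t)\}}$, which yields $\phi(z;t,\e,x)$ with no further computation.

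Next, for the characteristic function~\eqref{eq:chacfunct} I would evaluate Lemma~\ref{moment generating function} at the purely imaginary point $\ii z$, $z\in\mathbb{R}$. Because $c_\e(t)=\frac{2\fa}{\e^2}\frac{1}{1-e^{-\fa t}}>0$ for $t>0$ and $\mathsf{Re}(\ii z)=0$, the admissibility constraint $\mathsf{Re}(\ii z)<c_\e(t)$ holds for every $z\in\mathbb{R}$, so the indicator is identically one. Substituting $z\mapsto \ii z$ in~\eqref{eq:formula} and using $(\ii z)^2=-z^2$ converts each denominator $c^2_\e(t)-z^2$ into $c^2_\e(t)+z^2$ and produces the two exponential factors $\exp(\ii z c^2_\e(t)xe^{-\fa t}/(c^2_\e(t)+z^2))$ and $\exp(-z^2 c_\e(t)xe^{-\fa t}/(c^2_\e(t)+z^2))$, which is precisely~\eqref{eq:chacfunct}.

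For the uniform bound I would take moduli factor by factor. Since $1-\ii z/c_\e(t)$ has real part $1$ and imaginary part $-z/c_\e(t)$, one has $|1-\ii z/c_\e(t)|^{-2\fb/\e^2}=(1+z^2/c^2_\e(t))^{-\fb/\e^2}$; the exponent $\ii z c^2_\e(t)xe^{-\fa t}/(c^2_\e(t)+z^2)$ is purely imaginary, so the corresponding factor has modulus one; and the exponent $-z^2 c_\e(t)xe^{-\fa t}/(c^2_\e(t)+z^2)$ is real and nonpositive, using $x\gqq 0$, $c_\e(t)>0$ and $e^{-\fa t}>0$, so the last factor is at most one. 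Multiplying gives $|\psi(z;t,\e,x)|\lqq (1+z^2/c^2_\e(t))^{-\fb/\e^2}$ for all $z\in\mathbb{R}$. The change of variables $w=z/c_\e(t)$ then yields $\int_{\mathbb{R}}(1+z^2/c^2_\e(t))^{-\fb/\e^2}\,\ud z=c_\e(t)\int_{\mathbb{R}}(1+w^2)^{-\fb/\e^2}\,\ud w$, and the one-dimensional integral $\int_{\mathbb{R}}(1+w^2)^{-p}\,\ud w$ is finite exactly when $2p>1$; with $p=\fb/\e^2$ this is $\e^2<2\fb$, i.e.\ $\e\in(0,\sqrt{2\fb})$, which is exactly the ergodic Feller regime.

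I do not anticipate a genuine obstacle here: the only delicate point is the right to evaluate the moment generating function at the imaginary point $\ii z$, and this is already granted by Lemma~\ref{moment generating function}, whose closed form is asserted on the whole half-plane $\{\mathsf{Re}(z)<c_\e(t)\}$ rather than merely on the real axis. Had only the real-variable formula been available, one would instead argue that both sides of~\eqref{eq:mgfX2} are holomorphic on that half-plane and agree on a real interval, hence on the entire half-plane by the identity theorem, and in particular along the imaginary axis.
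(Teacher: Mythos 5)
Your proposal is correct and follows exactly the route the paper intends: the corollary is obtained from Lemma~\ref{moment generating function} by specialising the complex argument to $\ii z$ (respectively to real $z<c_\e(t)$), and the modulus bound and integrability on $\e\in(0,\sqrt{2\fb})$ follow from the factor-by-factor estimate and the change of variables you describe. No gaps.
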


\begin{lemma}[Gaussian central limit theorem at the mixing time]\label{Lem:limit 1}
Let $x\in[0,\infty)$ and $r\in \mathbb{R}$. Then it follows that 
\begin{equation}\label{eq:formulat}
\lim\limits_{\e\to 0^+}\mathbb{E}\left[e^{zY_{t_{\e}+r\cdot\omega_\e}^\e(x)}\right]=\exp\left(\frac{z^2}{2}\right)\exp\left(  z \frac{\sqrt{2}}{\sqrt{\fb}}(\fa x-\fb)e^{-r}\right)\quad 
\textrm{ for any }\quad  z\in \mathbb{C},
\end{equation}
where
$Y_t^{\e}(x)=\frac{c_{\e}(\infty) X_t^{\e}(x)-\left(q_{\e}+1\right)}{\sqrt{q_{\e}+1}}$ for any $t\gqq 0$, and $t_\e$ and $\omega_\e$ are given by 
\[t_{\e}:=
\frac{1}{\fa}\ln\left(\frac{1}{\e}\right)\quad \textrm{ and }\quad \omega_{\e}:=\frac{1}{\fa}.
\]
\end{lemma}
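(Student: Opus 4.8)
The plan is to start from the explicit analytically continued moment generating function of $X^\e_t(x)$ given in Lemma~\ref{moment generating function} and substitute it into the definition of $Y^\e_t(x)$. Since $Y^\e_t(x)=\big(c_\e(\infty)X^\e_t(x)-(q_\e+1)\big)/\sqrt{q_\e+1}$, for every $z\in\mathbb{C}$ we have
\[
\mathbb{E}\!\left[e^{zY^\e_t(x)}\right]=\mathbb{E}\!\left[e^{wX^\e_t(x)}\right]\,e^{-z\sqrt{q_\e+1}},
\qquad
w:=\frac{z\,c_\e(\infty)}{\sqrt{q_\e+1}} .
\]
Specialising to $t=t_\e+r\cdot\omega_\e$ one has $e^{-\fa t}=\e\,e^{-r}$, $c_\e(\infty)=2\fa/\e^2$, $c_\e(t)=2\fa/\big(\e^2(1-\e e^{-r})\big)$ and $q_\e+1=2\fb/\e^2$, so that $\sqrt{q_\e+1}=\sqrt{2\fb}/\e$, $\e\sqrt{q_\e+1}=\sqrt{2\fb}$, $w=\frac{2\fa}{\sqrt{2\fb}}\,\frac{z}{\e}$, and, crucially,
\[
\frac{w}{c_\e(t)}=\frac{c_\e(\infty)}{c_\e(t)}\cdot\frac{z}{\sqrt{q_\e+1}}=\frac{z_\e}{\sqrt{q_\e+1}},
\qquad z_\e:=z\,(1-\e e^{-r}),
\]
with $z_\e\to z$ as $\e\to 0^+$. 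I would first record that for $\e$ small (depending on $z$ and $r$) one has $t>0$ and $\mathsf{Re}(w)<c_\e(t)$ — clear since $\mathsf{Re}(w)/c_\e(t)=\mathsf{Re}(z_\e)/\sqrt{q_\e+1}\to0$ — so that formula~\eqref{eq:mgfX2} applies.

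With this substitution, $\mathbb{E}[e^{zY^\e_t(x)}]=A_\e\cdot B_\e$, where
\[
A_\e:=\frac{e^{-z\sqrt{q_\e+1}}}{\big(1-z_\e/\sqrt{q_\e+1}\big)^{q_\e+1}},
\qquad
B_\e:=\exp\!\left(\frac{w\,c_\e(t)\,x\,e^{-\fa t}}{c_\e(t)-w}\right)=\exp\!\left(\frac{x\,w\,e^{-\fa t}}{1-z_\e/\sqrt{q_\e+1}}\right).
\]
For the (easy) factor $B_\e$, I would note that $w\,e^{-\fa t}=\frac{c_\e(\infty)}{\sqrt{q_\e+1}}\,\e\,e^{-r}z=\frac{2\fa}{\sqrt{2\fb}}\,e^{-r}z$ is bounded in $\e$ while $z_\e/\sqrt{q_\e+1}\to0$, hence $B_\e\to\exp\!\big(\tfrac{2\fa x}{\sqrt{2\fb}}\,e^{-r}z\big)$. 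For the (delicate) factor $A_\e$, I would use $\e\sqrt{q_\e+1}=\sqrt{2\fb}$ to split off $e^{-z\sqrt{q_\e+1}}=e^{-z_\e\sqrt{q_\e+1}}\,e^{-z\sqrt{2\fb}\,e^{-r}}$ (because $(z-z_\e)\sqrt{q_\e+1}=z\e e^{-r}\sqrt{q_\e+1}=z\sqrt{2\fb}\,e^{-r}$), and then apply Lemma~\ref{lem:logconv} with $\alpha=q_\e+1$ and the argument $z_\e\to z$ — equivalently, a second-order Taylor expansion of $\log\big(1-z_\e/\sqrt{q_\e+1}\big)$ in which the divergent terms $z_\e\sqrt{q_\e+1}$ and $-z_\e\sqrt{q_\e+1}$ cancel — to get $\big(1-z_\e/\sqrt{q_\e+1}\big)^{q_\e+1}e^{z_\e\sqrt{q_\e+1}}\to e^{-z^2/2}$, and hence $A_\e\to e^{z^2/2}\,e^{-z\sqrt{2\fb}\,e^{-r}}$.

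Combining the two limits yields $\lim_{\e\to0^+}\mathbb{E}[e^{zY^\e_t(x)}]=\exp\!\big(\tfrac{z^2}{2}\big)\exp\!\Big(z\,e^{-r}\big(\tfrac{2\fa x}{\sqrt{2\fb}}-\sqrt{2\fb}\big)\Big)$, and the constant simplifies via $\tfrac{2\fa x}{\sqrt{2\fb}}-\sqrt{2\fb}=\tfrac{2(\fa x-\fb)}{\sqrt{2\fb}}=\tfrac{\sqrt2}{\sqrt\fb}(\fa x-\fb)$, which is exactly~\eqref{eq:formulat}. The main obstacle is the treatment of $A_\e$: the two factors $\big(1-z_\e/\sqrt{q_\e+1}\big)^{-(q_\e+1)}$ and $e^{-z\sqrt{q_\e+1}}$ each blow up like $e^{\pm\,\mathrm{const}/\e}$, so one must expand the complex logarithm to second order and track the $\mathrm{O}(\e^2)$ gap between $z$ and $z_\e$ in order to see simultaneously that the $\e^{-1}$-singular contributions cancel, that the quadratic coefficient converges to $z^2/2$, and that the cross term $-z\sqrt{2\fb}\,e^{-r}$ (which encodes the $e^{-r}$-dependent drift $C_x e^{-r}$ of the limiting Gaussian) survives; for complex $z$ this is unproblematic only because the base $1-z_\e/\sqrt{q_\e+1}$ tends to $1$, so the principal branch of the logarithm is unambiguous for $\e$ small.
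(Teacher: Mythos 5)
Your proposal is correct and follows essentially the same route as the paper: substitute the explicit analytically continued moment generating function of Lemma~\ref{moment generating function} at $t=t_\e+r\cdot\omega_\e$ (where $e^{-\fa t}=\e e^{-r}$), treat the $x$-dependent exponential factor by a direct limit, and handle the Gamma-type factor by a second-order expansion of the complex logarithm in which the $\e^{-1}$-divergent terms cancel (the paper splits the exponential into three factors $E_1E_2E_3$ and applies Lemma~\ref{lem:logconvmodificado} directly, while your split of $e^{-z\sqrt{q_\e+1}}=e^{-z_\e\sqrt{q_\e+1}}e^{-z\sqrt{2\fb}e^{-r}}$ reduces to the equivalent Lemma~\ref{lem:logconvmodificadodos}). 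The only nominal correction: the lemma you should cite for the factor $A_\e$ is the complex-valued Lemma~\ref{lem:logconvmodificadodos} (or~\ref{lem:logconvmodificado}), not Lemma~\ref{lem:logconv}, which is stated only for a fixed real argument — though the Taylor-expansion argument you describe is exactly its proof.
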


\begin{proof}
Let $z\in \mathbb{C}$ such that
\begin{equation}\label{eq:re1}
\mathrm{Re}(z)<\sqrt{q_\e+1}\frac{c_\e(t_{\e}+r\cdot  \omega_\e)}{c_\e(\infty)}=
\frac{\sqrt{2\fb}}{\e}\frac{1}{1-\e e^{-r}}.
\end{equation}
We denote $\varphi_{\e}(z;x):= \mathbb{E}[e^{zY_{t_{\e}+r \omega_\e}^\e(x)}]$.
Straightforward computations with the help of Lemma~\ref{moment generating function}
in Appendix~\ref{apenddist}
yields
\begin{equation}\label{Eq:varphia}
\varphi_\e(z;x)=
E_1(\e)E_2(\e;x)E_3(\e;x),
\end{equation}
where
\begin{equation}\label{eq:E1a}
E_1(\e):=
\frac{e^{- z \sqrt{q_{\e}+1}}}{\left(1-\frac{c_\e (\infty)z}{c_\e(t_\e+r\cdot \omega_\e)\sqrt{q_\e+1}}\right)^{q_\e+1}},
\end{equation}
\begin{equation}\label{eq:E2b}
E_2(\e;x):=\exp \left(\frac{\frac{c_\e(\infty)z}{\sqrt{q_\e+1}} c^2_\e(t_\e+r\cdot \omega_\e)xe^{-\fa (t_\e+r\cdot \omega_\e)}}{c^2_\e(t_\e+r\cdot \omega_\e)-\frac{c^2_\e(\infty)z^2}{q_\e+1}}\right)
\end{equation}
and 
\begin{equation}\label{eq:E3c}
E_3(\e;x):=\exp \left(\frac{\frac{c^2_\e(\infty)z^2}{q_\e+1} c_\e(t_\e+r\cdot \omega_\e)xe^{-\fa (t_\e+r\cdot \omega_\e)}}{c^2_\e(t_\e+r\cdot \omega_\e)-\frac{c^2_\e(\infty)z^2}{q_\e+1}}\right).
\end{equation}
In the sequel, we compute the limiting behavior of~\eqref{eq:E1a}.
Let $\alpha_\e:=q_\e +1$, $u:=-z$,
\[
u_\e=-\frac{c_\e(\infty)}{c_\e(t_\e+r\cdot \omega_\e)}z=-(1-e^{-\fa(t_\e+r\cdot \omega_\e)})z=u-\e e^{-r}u,
\]
from this we observe that
$\sqrt{\alpha_\e}(u_\e-u)=  z\sqrt{2\fb}e^{-r}$.
By Lemma~\ref{lem:logconvmodificado} in Appendix~\ref{ap:tools} we have
\begin{equation}\label{I1a}
\lim\limits_{\e\to 0^+}E_1(\e)=e^{z^2/2}e^{-z \sqrt{2\fb}e^{-r}}. 
\end{equation}
Now, we compute the asymptotic behavior of~\eqref{eq:E2b}. Straightforward computation with the help of~\eqref{eq:qepsx0} yields,
\begin{equation}
\begin{aligned}
E_2(\e ; x) & =\exp \left( z x \frac{c_{\e}(\infty)}{\sqrt{q_{\e}+1}} \frac{c_{\e}^2\left(t_{\e}+r \cdot \omega_{\e}\right)}{c_{\e}^2(\infty)} \frac{e^{-\mathfrak{a}\left(t_{\e}+r \cdot \omega_{\e}\right)}}{\frac{c_{\e}^2\left(t_{\e}+r \cdot \omega_{\e}\right)}{c_{\e}^2(\infty)}-\frac{z^2}{q_{\e}+1}}\right) \\
& =\exp \left( z x \frac{c_{\e}(\infty)}{\sqrt{q_{\e}+1}} \frac{1}{(1-e^{-\fa(t_\e+r\cdot \omega_\e)})^2} \frac{e^{-\mathfrak{a}\left(t_{\e}+r \cdot \omega_{\e}\right)}}{\frac{1}{(1-e^{-\fa(t_\e+r\cdot \omega_\e)})^2}-\frac{z^2}{q_{\e}+1}}\right)\\
& =\exp \left( z x \frac{2 \mathfrak{a}}{\sqrt{2 \mathfrak{b}}} \frac{1}{\left(1-\e e^{-r}\right)^2} \frac{e^{-r}}{\frac{1}{\left(1-\e e^{-r}\right)^2}-\frac{\e^2 z^2}{2 \mathfrak{b}}}\right),
\end{aligned}
\end{equation}
which implies
\begin{equation}\label{E2b}
\lim_{\e \rightarrow 0^{+}} E_2(\e ; x)=\exp \left( z x \frac{2 \mathfrak{a}}{\sqrt{2 \mathfrak{b}}} e^{-r}\right).    
\end{equation}
Finally, we compute the limiting behavior of~\eqref{eq:E3c}. We note that
\begin{equation*}
\begin{split}
E_3(\e;x)&=\exp \left(z^2 x \ \frac{1}{q_\e+1} \frac{ c_\e(t_\e+r\cdot \omega_\e)e^{-\fa (t_\e+r\cdot \omega_\e)}}{\frac{c^2_\e(t_\e+r\cdot \omega_\e)}{c^2_\e(\infty)}-\frac{z^2}{q_\e+1}}\right) \\
&=\exp \left(z^2 x \ \frac{\e^2}{2\fb} \frac{ \frac{2\fa}{\e}e^{-r}\frac{1}{1-\e e^{r}}}{\frac{1}{(1-\e e^{-r})^2}-\frac{z^2\e}{2\fb}}\right),
\end{split}
\end{equation*}
which yields
\begin{equation}\label{I3b}
\lim\limits_{\e\to 0^+}E_3(\e;x)=1.
\end{equation}
Combining~\eqref{I1a},~\eqref{E2b} and~\eqref{I3b} 
in~\eqref{Eq:varphia} 
with the observation that the right-hand side of~\eqref{eq:re1} tends to infinity as $\e\to 0^+$,
we obtain~\eqref{eq:formulat}.
\end{proof}

\begin{lemma}[Gaussian central limit theorem at any time scale]\label{Lemma lim 2}
Let $x=\frac{\fb}{\fa}$ and $r\in \mathbb{R}$. Then 
for any function $(s_\e)_{\e>0}$ such that 
$s_\e\to \infty$ as $\e\to 0^+$
it follows that 
\begin{equation}\label{eq:laplace1}
\lim\limits_{\e\to 0^+}\mathbb{E}\left[e^{zY_{s_{\e}}^\e(x)}\right]=\exp\left(\frac{z^2}{2}\right)\quad 
\textrm{ for any }\quad  z\in \mathbb{C},
\end{equation}
where
$Y_t^{\e}(x)=\frac{c_{\e}(\infty) X_t^{\e}(x)-\left(q_{\e}+1\right)}{\sqrt{q_{\e}+1}}$ for any $t\gqq 0$.
\end{lemma}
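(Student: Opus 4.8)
The plan is to mimic the proof of Lemma~\ref{Lem:limit 1}: express the moment generating function of $Y^\e_{s_\e}(x)$ in closed form through the analytic continuation of the characteristic function in Lemma~\ref{moment generating function}, and then let $\e\to 0^+$. Fix $z\in\mathbb{C}$ and put $w_\e:=\frac{c_\e(\infty)z}{\sqrt{q_\e+1}}=\frac{2\fa}{\e\sqrt{2\fb}}z$. Since $c_\e(s_\e)=\frac{2\fa}{\e^2}\frac{1}{1-e^{-\fa s_\e}}$ is of order $\e^{-2}$ (here $s_\e\to\infty$ makes $1-e^{-\fa s_\e}\to1$) while $\mathsf{Re}(w_\e)=\mathrm{O}(\e^{-1})$, the admissibility condition $\mathsf{Re}(w_\e)<c_\e(s_\e)$ of~\eqref{eq:mgfX2} holds for all sufficiently small $\e$; for such $\e$ one may apply Lemma~\ref{moment generating function} with $z$ replaced by $w_\e$ and $t=s_\e$, after writing $zY^\e_{s_\e}(x)=w_\e X^\e_{s_\e}(x)-z\sqrt{q_\e+1}$.

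The feature specific to $x=\fb/\fa$ is that $c_\e(\infty)\cdot\frac{\fb}{\fa}=\frac{2\fa}{\e^2}\cdot\frac{\fb}{\fa}=\frac{2\fb}{\e^2}=q_\e+1$, hence $w_\e\cdot\frac{\fb}{\fa}=z\sqrt{q_\e+1}$. Inserting this into~\eqref{eq:mgfX2} and rearranging gives, for all small $\e$,
\[
\mathbb{E}\!\left[e^{zY^\e_{s_\e}(\fb/\fa)}\right]
=\frac{e^{-z\sqrt{q_\e+1}}}{\left(1-\tfrac{\beta_\e}{\sqrt{q_\e+1}}\right)^{q_\e+1}}
\exp\!\left(\frac{z\sqrt{q_\e+1}\,e^{-\fa s_\e}}{1-\tfrac{\beta_\e}{\sqrt{q_\e+1}}}\right),
\qquad \beta_\e:=z\bigl(1-e^{-\fa s_\e}\bigr),
\]
where I used $\tfrac{w_\e}{c_\e(s_\e)}=\tfrac{z}{\sqrt{q_\e+1}}\cdot\tfrac{c_\e(\infty)}{c_\e(s_\e)}=\tfrac{\beta_\e}{\sqrt{q_\e+1}}$. (Equivalently, one can keep the three factors $E_1,E_2,E_3$ exactly as in the proof of Lemma~\ref{Lem:limit 1}, merely combining $E_1$ with $E_2$.)

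The only point where the argument genuinely departs from Lemma~\ref{Lem:limit 1} — and what I expect to be the sole obstacle — is that for the mixing-time scale $s_\e=t_\e+r\cdot\omega_\e$ one has $\sqrt{q_\e+1}\,e^{-\fa s_\e}=\tfrac{\sqrt{2\fb}}{\e}e^{-\fa s_\e}\to\sqrt{2\fb}\,e^{-r}$, so $E_1,E_2,E_3$ converge individually, whereas for a general diverging sequence $(s_\e)_{\e>0}$ the quantity $\sqrt{q_\e+1}\,e^{-\fa s_\e}$ need not converge; thus the $-z\sqrt{q_\e+1}$ contribution, the logarithmic term and the exponential term must be combined before passing to the limit. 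Concretely, take logarithms and expand $\log(1-v)=-v-\tfrac{v^2}{2}+\mathrm{O}(v^3)$ with $v=\beta_\e/\sqrt{q_\e+1}=\mathrm{O}(\e)$ (valid since $\beta_\e$ stays bounded, as $e^{-\fa s_\e}\to0$), as in the logarithmic estimates of Appendix~\ref{ap:tools} (Lemma~\ref{lem:logconvmodificado}), to get
\[
-(q_\e+1)\log\!\left(1-\tfrac{\beta_\e}{\sqrt{q_\e+1}}\right)=\sqrt{q_\e+1}\,\beta_\e+\tfrac{\beta_\e^2}{2}+\mathrm{o}(1).
\]
Then $\log\mathbb{E}[e^{zY^\e_{s_\e}(\fb/\fa)}]=\bigl(\sqrt{q_\e+1}\,\beta_\e-z\sqrt{q_\e+1}\bigr)+\tfrac{z\sqrt{q_\e+1}\,e^{-\fa s_\e}}{1-\beta_\e/\sqrt{q_\e+1}}+\tfrac{\beta_\e^2}{2}+\mathrm{o}(1)$, and since $\sqrt{q_\e+1}\,\beta_\e-z\sqrt{q_\e+1}=-z\sqrt{q_\e+1}\,e^{-\fa s_\e}$ the first two summands collapse to $\tfrac{z\beta_\e e^{-\fa s_\e}}{1-\beta_\e/\sqrt{q_\e+1}}$, which is bounded and in fact tends to $0$ because $e^{-\fa s_\e}\to0$. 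Hence $\log\mathbb{E}[e^{zY^\e_{s_\e}(\fb/\fa)}]\to\tfrac{z^2}{2}$ since $\beta_\e\to z$, and exponentiating gives~\eqref{eq:laplace1}. The remaining work is routine bookkeeping: verifying that every error term is genuinely $\mathrm{o}(1)$ rather than merely bounded, which reduces to the two facts $e^{-\fa s_\e}\to 0$ and $\beta_\e/\sqrt{q_\e+1}\to0$.
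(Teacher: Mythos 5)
Your proposal is correct and follows essentially the same route as the paper: both start from the closed-form moment generating function of Lemma~\ref{moment generating function}, recognize that $\sqrt{q_\e+1}\,e^{-\fa s_\e}$ need not converge for a general diverging $(s_\e)_{\e>0}$ so the normalization $e^{-z\sqrt{q_\e+1}}$ must be combined with the exponential factor before passing to the limit (so that the divergent pieces cancel to $-z\sqrt{q_\e+1}e^{-\fa s_\e}$ against $+z\sqrt{q_\e+1}e^{-\fa s_\e}$), and then conclude via a second-order expansion of the logarithm. The only cosmetic differences are that you perform the Taylor expansion inline where the paper invokes Lemma~\ref{lem:logconvmodificadodos}, and you keep the exponential factor of~\eqref{eq:mgfX2} unsplit rather than factoring it as $\widetilde{E}_2\widetilde{E}_3$.
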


\begin{proof}
Let $z\in \mathbb{C}$ such that
\begin{equation}\label{eq:re1new}
\mathrm{Re}(z)<\sqrt{q_\e+1}\frac{c_\e(s_{\e})}{c_\e(\infty)}=
\frac{\sqrt{2\fb}}{\e}\frac{1}{1-e^{-\fa s_\e}},
\end{equation}
and let
$\widetilde{\varphi}_\e(z;x):=\mathbb E[e^ { z Y_{s_\e}^{\e }(x)}]$. 
Similarly to~\eqref{Eq:varphia} we obtain
\begin{equation}\label{Eq:varphinew}
\widetilde{\varphi}_\e(z;x)=
\widetilde{E}_1(\e)\widetilde{E}_2(\e;x)\widetilde{E}_3(\e;x),
\end{equation}
where
\begin{equation}\label{eq:E1newa}
\widetilde{E}_1(\e):=
\frac{e^{- z \sqrt{q_{\e}+1}}}{\left(1-\frac{c_\e (\infty)z}{c_\e(s_\e)\sqrt{q_\e+1}}\right)^{q_\e+1}}=\frac{1}{\left(1-\frac{(1-e^{-\fa s_\e})z}{\sqrt{q_\e+1}}\right)^{q_\e+1}}e^{- z \sqrt{q_{\e}+1}},
\end{equation}
\begin{equation}\label{eq:E2newa}
\begin{split}
\widetilde{E}_2(\e;x):&=\exp \left(\frac{\frac{c_\e(\infty)z}{\sqrt{q_\e+1}} c^2_\e(s_\e)xe^{-\fa s_\e}}{c^2_\e(s_\e)-\frac{c^2_\e(\infty)z^2}{q_\e+1}}\right)=\exp\left(z \sqrt{q_\e+1}e^{-\fa s_\e} \frac{1}{ 1-\frac{(1-e^{-\fa s_\e})^2 z^2}{q_\e+1}} \right)
\end{split}
\end{equation}
and 
\begin{equation}\label{eq:E3newa}
\widetilde{E}_3(\e;x):=\exp \left(\frac{\frac{c^2_\e(\infty)z^2}{q_\e+1} c_\e(s_\e)xe^{-\fa s_\e}}{c^2_\e(s_\e)-\frac{c^2_\e(\infty)z^2}{q_\e+1}}\right)=
\exp \left(z^2  \frac{e^{-\fa s_\e}\frac{1}{1-e^{-\fa s_\e}}}{\frac{1}{(1-e^{-\fa s_\e})^2}-\frac{z^2}{q_\e+1}}\right).
\end{equation}
In the sequel, we show that
\begin{equation}\label{eq:tildelimitea}
\lim\limits_{\e\to 0^+}\widetilde{\varphi}_\e(z;x)=e^{z^2/2}\quad \textrm{ for all }\quad z\in \mathbb{C}.
\end{equation}
By~\eqref{eq:E3newa} we observe that
\begin{equation}\label{eq:limitE3new}
\lim\limits_{\e\to 0^+} \widetilde{E}_3(\e;x)=1.
\end{equation}
Now, by~\eqref{eq:E1newa} we note that
\begin{equation}\label{eq:simplia}
\begin{split}
\widetilde{E}_1(\e)&=\frac{1}{\left(1-\frac{(1-e^{-\fa s_\e})z}{\sqrt{q_\e+1}}\right)^{q_\e+1}}
\frac{e^{z\sqrt{q_\e+1}(1-e^{-\fa s_\e})}}{e^{ z\sqrt{q_\e+1}(1-e^{-\fa s_\e})}}   e^{- z \sqrt{q_{\e}+1}}\\
&=\frac{1}{\left(1-\frac{(1-e^{-\fa s_\e})z}{\sqrt{q_\e+1}}\right)^{q_\e+1}}
\frac{1}{e^{ z\sqrt{q_\e+1}(1-e^{-\fa s_\e})}}   e^{- z\sqrt{q_\e+1}e^{-\fa s_\e}}.
\end{split}
\end{equation}
By~\eqref{eq:E2newa} and~\eqref{eq:simplia} we obtain
\begin{equation*}
\begin{split}
\widetilde{E}_1(\e)\widetilde{E}_2(\e;x)&=
\frac{1}{\left(1-\frac{(1-e^{-\fa s_\e})z}{\sqrt{q_\e+1}}\right)^{q_\e+1}}
\frac{1}{e^{ z\sqrt{q_\e+1}(1-e^{-\fa s_\e})}} \\
&\qquad\times 
\exp\left( -z \sqrt{q_\e+1}e^{-\fa s_\e}\frac{1}{q_\e+1} \left(\frac{z^2 (1-e^{-\fa s_\e})^2}{ 1+\frac{(1-e^{-\fa s_\e})^2 z^2}{q_\e+1}}\right) \right).
\end{split}
\end{equation*}
The preceding equality with the help of Lemma~\ref{lem:logconvmodificadodos} in Appendix~\ref{ap:tools} yields
\begin{equation}\label{eq:E2E3}
\lim\limits_{\e\to 0^+}
\widetilde{E}_1(\e)\widetilde{E}_2(\e;x)=e^{z^2/2}\quad \textrm{ for all }\quad z\in \mathbb{C}.
\end{equation}
Combining~\eqref{eq:limitE3new} 
and~\eqref{eq:E2E3} 
in~\eqref{Eq:varphinew} 
implies~\eqref{eq:laplace1}. 
\end{proof}

\section{\textbf{Second order Taylor's expansions}}\label{ap:tools}

In this section, we show refined asymptotics that were used along the manuscript.

\begin{lemma}[Second order asymptotics I]\label{lem:logconv}
For any $z\in \mathbb{R}$ it follows that
\begin{equation*}
\lim\limits_{\alpha \to \infty}\left(1+\frac{z}{\sqrt{\alpha}}
\right)^{\alpha}e^{-\sqrt{\alpha}z}= e^{-z^2/2}.
\end{equation*}
\end{lemma}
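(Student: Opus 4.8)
The plan is to take logarithms and carry out a second-order Taylor expansion. Fix $z\in\mathbb{R}$; the case $z=0$ is trivial since then the left-hand side equals $1=e^{0}$, so assume $z\neq 0$. For every $\alpha>z^{2}$ we have $1+z/\sqrt{\alpha}>0$, so the quantity
\[
L_\alpha:=\alpha\ln\!\left(1+\frac{z}{\sqrt{\alpha}}\right)-\sqrt{\alpha}\,z
\]
is well defined and is precisely the logarithm of $\bigl(1+z/\sqrt{\alpha}\bigr)^{\alpha}e^{-\sqrt{\alpha}z}$. Since the exponential map is continuous, it suffices to prove that $L_\alpha\to -z^{2}/2$ as $\alpha\to\infty$.

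Set $u_\alpha:=z/\sqrt{\alpha}$, which tends to $0$. Differentiating shows that $R(u):=\ln(1+u)-u+\tfrac{u^{2}}{2}$ satisfies $R'(u)=\tfrac{u^{2}}{1+u}$ and $R(0)=0$, hence $R(u)=\int_{0}^{u}\tfrac{t^{2}}{1+t}\,\ud t$ and therefore $|R(u)|\lqq \tfrac{2}{3}|u|^{3}$ for all $|u|\lqq 1/2$. Substituting the expansion $\ln(1+u_\alpha)=u_\alpha-\tfrac{u_\alpha^{2}}{2}+R(u_\alpha)$ gives, for $\alpha$ large enough that $|u_\alpha|\lqq 1/2$,
\[
L_\alpha=\alpha u_\alpha-\frac{\alpha u_\alpha^{2}}{2}+\alpha R(u_\alpha)-\sqrt{\alpha}\,z=\sqrt{\alpha}\,z-\frac{z^{2}}{2}+\alpha R(u_\alpha)-\sqrt{\alpha}\,z=-\frac{z^{2}}{2}+\alpha R(u_\alpha),
\]
where we used $\alpha u_\alpha=\sqrt{\alpha}\,z$ and $\alpha u_\alpha^{2}=z^{2}$. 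Because $|\alpha R(u_\alpha)|\lqq \tfrac{2}{3}\alpha|u_\alpha|^{3}=\tfrac{2}{3}|z|^{3}\alpha^{-1/2}\to 0$, we conclude $L_\alpha\to -z^{2}/2$, and hence $\bigl(1+z/\sqrt{\alpha}\bigr)^{\alpha}e^{-\sqrt{\alpha}z}\to e^{-z^{2}/2}$.

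There is no genuine obstacle here: the statement is a routine second-order asymptotic, and the only point needing a word of care is the uniform control of the remainder $R(u)$ near $0$, which the explicit integral bound above supplies. This elementary estimate is exactly what feeds the Stirling-type computation in the proof of the local central limit theorem (Lemma~\ref{lem:lclt}) and, in the shifted forms $\sqrt{\alpha}(u_\alpha-u)\to\text{const}$, underlies the characteristic-function limits used elsewhere in the manuscript.
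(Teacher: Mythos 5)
Your proof is correct and follows essentially the same route as the paper's: take logarithms, expand $\ln(1+z/\sqrt{\alpha})$ to second order, and show the remainder times $\alpha$ vanishes. The only difference is cosmetic — you supply an explicit integral bound $|R(u)|\lqq \tfrac{2}{3}|u|^3$ for the Taylor remainder, where the paper merely asserts $\limsup_{\alpha\to\infty}\alpha^{3/2}|H(z,\alpha)|<\infty$.
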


\begin{proof}
Let $z\in \mathbb{R}$ be fixed.
It is enough to show that
\[
\lim\limits_{\alpha \to \infty}\left[
\alpha\ln\left(1+\frac{z}{\sqrt{\alpha}} \right)-\sqrt{\alpha}z\right]=-\frac{z^2}{2}.
\]
By Taylor's expansion we have for $\alpha \gg 1$
\[
\ln\left(1+\frac{z}{\sqrt{\alpha}} \right)=\frac{z}{\sqrt{\alpha}}-\frac{z^2}{2\alpha}+H(z,\alpha),
\]
where
\begin{equation}\label{eq:Gexpan}
\limsup_{r\to \infty}(\alpha^{3/2}|H(z,\alpha)|)<\infty.
\end{equation}
Then we have
\begin{equation*}
\begin{split}
\alpha \ln\left(1+\frac{z}{\sqrt{\alpha}} \right)-\sqrt{\alpha}z=-\frac{z^2}{2}+\alpha H(z,\alpha).
\end{split}
\end{equation*}
By~\eqref{eq:Gexpan} we deduce
$\alpha|G(z,\alpha)|\to 0$ as $\alpha\to \infty$ and conclude the statement.
\end{proof}

\begin{lemma}[Second order asymptotics II]\label{lem:logconvmodificado}
Let $(u_\alpha)_{\alpha>0} \subset \mathbb{C}$ be a function such that 
\begin{equation}\label{lim:c}
\lim\limits_{\alpha\to \infty } \sqrt{\alpha}(u_\alpha-u)=c\quad \textrm{ for some }\quad u\in \mathbb{C}\quad \textrm{ and }\quad c\in \mathbb{C}.
\end{equation}
 Then it follows that
\begin{equation*}
\lim\limits_{\alpha\to \infty}\left(1+\frac{u_\alpha}{\sqrt{\alpha}} \right)^{\alpha}e^{-\sqrt{\alpha}u}=
e^{-u^2/2+c}.
\end{equation*}
\end{lemma}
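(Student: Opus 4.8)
The plan is to take complex logarithms and carry out a second-order Taylor expansion, exactly as in the proof of Lemma~\ref{lem:logconv}, but now keeping track of the perturbation $u_\alpha-u$.

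First I would set $c_\alpha:=\sqrt{\alpha}(u_\alpha-u)$, so that by~\eqref{lim:c} we have $c_\alpha\to c$ and in particular $(c_\alpha)_{\alpha>0}$ is bounded, and write
\[
w_\alpha:=\frac{u_\alpha}{\sqrt{\alpha}}=\frac{u}{\sqrt{\alpha}}+\frac{c_\alpha}{\alpha}.
\]
Since $w_\alpha\to 0$, for all sufficiently large $\alpha$ the number $1+w_\alpha$ lies in a small neighbourhood of $1$, in particular off the negative real axis, so the principal branch $\mathrm{Log}$ of the complex logarithm is well defined and analytic there and $(1+w_\alpha)^{\alpha}=\exp(\alpha\,\mathrm{Log}(1+w_\alpha))$. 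By continuity of the exponential map it therefore suffices to prove
\[
\lim_{\alpha\to\infty}\Bigl(\alpha\,\mathrm{Log}(1+w_\alpha)-\sqrt{\alpha}\,u\Bigr)=-\frac{u^2}{2}+c.
\]

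Next I would use the expansion $\mathrm{Log}(1+w)=w-\tfrac{w^2}{2}+R(w)$ valid with $|R(w)|\lqq C|w|^3$ for $|w|\lqq \tfrac12$. Expanding,
\[
w_\alpha=\frac{u}{\sqrt{\alpha}}+\frac{c_\alpha}{\alpha},\qquad
w_\alpha^2=\frac{u^2}{\alpha}+\frac{2u c_\alpha}{\alpha^{3/2}}+\frac{c_\alpha^2}{\alpha^{2}},
\]
and since $|w_\alpha|=\mathrm{O}(\alpha^{-1/2})$ we get $|R(w_\alpha)|=\mathrm{O}(\alpha^{-3/2})$. Hence
\[
\alpha\,\mathrm{Log}(1+w_\alpha)=\sqrt{\alpha}\,u+c_\alpha-\frac{u^2}{2}+\mathrm{O}(\alpha^{-1/2}),
\]
so $\alpha\,\mathrm{Log}(1+w_\alpha)-\sqrt{\alpha}\,u\to c-\tfrac{u^2}{2}$ as $\alpha\to\infty$, and exponentiating yields the claim.

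The only point that requires care — rather than being a genuine obstacle — is the use of the complex logarithm: one has to check that $1+w_\alpha$ eventually stays in the domain of analyticity of the principal branch and that the power $(1+w_\alpha)^\alpha$ appearing in the statement coincides with $\exp(\alpha\,\mathrm{Log}(1+w_\alpha))$; after that, the bookkeeping with the remainder $R(w_\alpha)$ is identical to that in the proof of Lemma~\ref{lem:logconv}. I expect no further difficulty.
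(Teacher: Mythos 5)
Your proposal is correct and follows essentially the same route as the paper's proof: take the principal branch of the complex logarithm, expand to second order with a cubic remainder bound, and track the contribution $\sqrt{\alpha}(u_\alpha-u)\to c$. The only (minor) difference is that you substitute $u_\alpha=u+c_\alpha/\sqrt{\alpha}$ at the outset and make the branch-of-logarithm issue explicit, whereas the paper expands around $u_\alpha$ and then rewrites $\sqrt{\alpha}u_\alpha-\sqrt{\alpha}u=\sqrt{\alpha}(u_\alpha-u)$; the bookkeeping is the same.
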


\begin{proof}
It is enough to show that,
\[
\lim\limits_{\alpha \to \infty}\left[
\alpha\ln \left(1+\frac{u_\alpha}{\sqrt{\alpha}} \right)-\sqrt{\alpha}u\right]=-\frac{u^2}{2}+c.
\]
Here, we are considering the principal branch of  the complex logarithmic function $\log:=\ln$.
By Taylor's expansion we have for $\alpha \gg 1$
\[
\ln \left(1+\frac{u_\alpha}{\sqrt{\alpha}} \right)=\frac{u_\alpha}{\sqrt{\alpha}}-\frac{u^2_\alpha}{2\alpha}+H(u_\alpha,\alpha),
\]
where
\begin{equation}\label{eq:Gexpanmod}
\limsup_{\alpha \to \infty}(\alpha^{3/2}|H(u_\alpha,\alpha)|)<\infty.
\end{equation}
Then we have
\begin{equation*}
\begin{split}
\alpha \ln\left(1+\frac{u_\alpha}{\sqrt{\alpha}} \right)-\sqrt{\alpha}u_\alpha=-\frac{u^2_\alpha}{2}+\alpha H(u_\alpha,\alpha),
\end{split}
\end{equation*}
which can be written as
\begin{equation*}
\begin{split}
\alpha\ln\left(1+\frac{u_\alpha}{\sqrt{\alpha}} \right)-\sqrt{\alpha}u=\sqrt{\alpha}(u_\alpha-u)-\frac{u^2_\alpha}{2}+\alpha H(u_\alpha,\alpha).
\end{split}
\end{equation*}
By~\eqref{eq:Gexpanmod} we deduce that
$\alpha |H(u_\alpha,\alpha )|\to 0$ as $\alpha\to \infty$ and
with the help of~\eqref{lim:c} we 
 conclude the statement.
\end{proof}

\begin{lemma}[Second order asymptotics III]\label{lem:logconvmodificadodos}
Let $(u_\alpha)_{\alpha>0} \subset \mathbb{C}$ be a function such that 
\begin{equation}\label{lim:cdos}
\lim\limits_{\alpha \to \infty } u_\alpha=u\quad \textrm{ for some }\quad u\in \mathbb{C}.
\end{equation}
 Then it follows that
\begin{equation*}
\lim\limits_{\alpha \to \infty}\left(1+\frac{u_\alpha}{\sqrt{\alpha}} \right)^{\alpha}e^{-\sqrt{\alpha }u_\alpha}=
e^{-u^2/2}.
\end{equation*}
\end{lemma}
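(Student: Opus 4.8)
The plan is to take logarithms and run a second-order Taylor expansion, exactly as in the proof of Lemma~\ref{lem:logconvmodificado}, but now the argument is even cleaner because the exponential factor carries $u_\alpha$ rather than $u$, so there is no term $\sqrt{\alpha}(u_\alpha-u)$ to isolate. Note that we cannot simply reduce to Lemma~\ref{lem:logconvmodificado} via $e^{-\sqrt{\alpha}u_\alpha}=e^{-\sqrt{\alpha}u}e^{-\sqrt{\alpha}(u_\alpha-u)}$, since under the sole hypothesis~\eqref{lim:cdos} the quantity $\sqrt{\alpha}(u_\alpha-u)$ need not converge; hence a direct argument is required.

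First I would observe that $(u_\alpha)_{\alpha>0}$ is bounded in $\mathbb{C}$, being convergent, so that $u_\alpha/\sqrt{\alpha}\to 0$ and, for all $\alpha$ large enough, $u_\alpha/\sqrt{\alpha}$ stays in a fixed neighbourhood of the origin on which the principal branch $\log$ is analytic. By Taylor's theorem with remainder one then has, for $\alpha\gg 1$,
\[
\ln\left(1+\frac{u_\alpha}{\sqrt{\alpha}}\right)=\frac{u_\alpha}{\sqrt{\alpha}}-\frac{u_\alpha^2}{2\alpha}+H(u_\alpha,\alpha),
\]
where the boundedness of $(u_\alpha)$ yields $\limsup_{\alpha\to\infty}\bigl(\alpha^{3/2}|H(u_\alpha,\alpha)|\bigr)<\infty$. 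Multiplying by $\alpha$ and subtracting $\sqrt{\alpha}\,u_\alpha$ gives
\[
\alpha\ln\left(1+\frac{u_\alpha}{\sqrt{\alpha}}\right)-\sqrt{\alpha}\,u_\alpha=-\frac{u_\alpha^2}{2}+\alpha H(u_\alpha,\alpha).
\]
Since $\alpha|H(u_\alpha,\alpha)|\lqq \alpha^{-1/2}\bigl(\alpha^{3/2}|H(u_\alpha,\alpha)|\bigr)\to 0$ and $u_\alpha^2\to u^2$ by~\eqref{lim:cdos}, the right-hand side tends to $-u^2/2$; exponentiating and using continuity of $\exp$ yields the claim.

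The only delicate point is the uniform control of the Taylor remainder $H(u_\alpha,\alpha)$ while $u_\alpha$ varies with $\alpha$. This is handled by using the Lagrange (or integral) form of the remainder uniformly over the bounded set in which the $u_\alpha$ lie: once $\alpha$ is large enough that $|u_\alpha|/\sqrt{\alpha}\lqq \tfrac12$, the second-derivative bound for $\log$ on that region is uniform, giving $|H(u_\alpha,\alpha)|=\mathrm{O}(\alpha^{-3/2})$ with a constant independent of $\alpha$. Everything else is routine.
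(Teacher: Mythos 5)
Your proof is correct and follows essentially the same route as the paper: take the principal-branch logarithm, expand to second order with remainder $H(u_\alpha,\alpha)$ satisfying $\alpha^{3/2}|H(u_\alpha,\alpha)|=\mathrm{O}(1)$, and pass to the limit using $u_\alpha\to u$. Your additional remarks — that one cannot reduce to Lemma~\ref{lem:logconvmodificado} because $\sqrt{\alpha}(u_\alpha-u)$ need not converge, and that the remainder bound is uniform over the bounded set containing the $u_\alpha$ — only make explicit what the paper leaves implicit.
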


\begin{proof}
It is enough to prove that,
\[
\lim\limits_{\alpha \to \infty}\left[
\alpha \ln\left(1+\frac{u_\alpha }{\sqrt{\alpha}} \right)-\sqrt{\alpha}u_\alpha\right]=-\frac{u^2}{2}.
\]
Here, we are considering the principal branch of  the complex logarithmic function $\log:=\ln.$
By Taylor's expansion we have for $\alpha \gg 1$
\[
\ln\left(1+\frac{u_\alpha}{\sqrt{\alpha}} \right)=\frac{u_\alpha}{\sqrt{\alpha}}-\frac{u^2_\alpha }{2\alpha}+H(u_\alpha,\alpha),
\]
where
\begin{equation}\label{eq:Gexpandos}
\limsup_{\alpha\to \infty}(\alpha^{3/2}|H(u_\alpha,\alpha)|)<\infty.
\end{equation}
Then we have
\begin{equation*}
\begin{split}
\alpha\ln\left(1+\frac{u_\alpha}{\sqrt{\alpha}} \right)-\sqrt{\alpha}u_\alpha=-\frac{u^2_\alpha}{2}+\alpha H(u_\alpha,\alpha).
\end{split}
\end{equation*}
By~\eqref{eq:Gexpandos} we deduce that
$\alpha|H(u_\alpha,\alpha)|\to 0$ as $\alpha\to \infty$ and
with the help of~\eqref{lim:cdos} we 
 conclude the statement.
\end{proof}

\begin{remark}
We point out that~\eqref{lim:c} implies~\eqref{lim:cdos}. However, in general~\eqref{lim:cdos} does not imply~\eqref{lim:c}.
\end{remark}

\section{\textbf{Mixing times equivalence of asymptotic profile cut-off}}\label{mixtime}

In this section, we show Item~(4) of Proposition~\ref{proposition asymptotic}. For clarity,  we state it as a Lemma~\ref{lem:cutoffsta}.
The  proofs of Item~(1), Item~(2) and Item~(3) are analogous.

Lemma~\ref{lem:cutoffsta} establishes the asymptotic relation between the  $\eta$-mixing times and the  cut-off time in a general framework. 
We stress that this result is valid under mild assumptions on the profile function $G$.

\begin{lemma}[Asymptotics of mixing times and profile cut-off phenomenon]
\label{lem:cutoffsta}
Assume that~\eqref{eq:monotonic} holds true and
that the function 
$G:\mathbb{R}\to (0,\mathrm{Diam})$ is strictly decreasing and continuous. In addition, assume that 
$\lim\limits_{r\rightarrow -\infty}\,G(r)=\mathrm{Diam}\quad 
\textrm{ and } \quad \lim\limits_{r\rightarrow \infty}\,G(r)=0$,
and the existence of a function
$((t_{\e},\omega_{\e}))_{\e>0}$ satisfying
\[
\lim \limits_{\e \to 0^+}t_{\e}= \infty \quad
\textrm{ and } \quad 
\lim\limits_{\e\to 0^+}\frac{\omega_{\e}}{t_{\e}}=0.
\]
Then the following statements are equivalent.
\begin{itemize}
\item[(a)] 
The following limit
\begin{equation}\label{ec:limitperfil}
\lim\limits_{\e \rightarrow 0^+}\,\mathrm{dist}_\e(\mathrm{Law}\,\mathcal{X}^{\e}_{t_{\e}+r\cdot \omega_{\e}},\nu^\e)=G(r)\quad 
\textrm{ for any }\quad r\in \mathbb{R}
\end{equation}
holds true.
\item[(b)] For any $\eta\in (0,\mathrm{Diam})$
it follows that
\begin{equation}\label{ec:tmixp}
\tau^{\e,\mathrm{dist}_{\e}}_{\mathrm{mix}}(\eta)=t_\e+ G^{-1}(\eta)\cdot \omega_\e +\mathrm{o}(\omega_\e),\quad 
\textrm{ as }\quad \e\to 0^+,
\end{equation}
is valid,
where $G^{-1}(\eta)$ is the unique $r:=r_\eta\in \mathbb{R}$ such that $G(r)=\eta$ and the function 
$\mathrm{o}(\omega_\e)$ satisfies
$\frac{\mathrm{o}(\omega_\e)}{\omega_\e}\to 0$ as $\e\to 0^+$.
\end{itemize}
\end{lemma}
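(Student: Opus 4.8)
The plan is to deduce both implications from a single structural fact: the monotonicity hypothesis~\eqref{eq:monotonic} turns the infimum in the definition of $\tau^{\e,\mathrm{dist}_{\e}}_{\mathrm{mix}}(\eta)$ into a sharp threshold, and, since $G$ is continuous, strictly decreasing, and satisfies $\lim_{r\to-\infty}G(r)=\mathrm{Diam}$, $\lim_{r\to\infty}G(r)=0$, it is a homeomorphism of $\mathbb{R}$ onto $(0,\mathrm{Diam})$; in particular $G^{-1}$ is defined on all of $(0,\mathrm{Diam})$ and is itself continuous and strictly decreasing. Writing $D_\e(t):=\mathrm{dist}_\e(\mathrm{Law}\,\mathcal{X}^{\e}_{t},\nu^\e)$, the first thing I would record is the elementary pair of implications, valid for every $a\gqq 0$ and every $\eta\in(0,\mathrm{Diam})$,
\[
D_\e(a)\lqq \eta \ \Longrightarrow\ \tau^{\e,\mathrm{dist}_{\e}}_{\mathrm{mix}}(\eta)\lqq a,
\qquad
D_\e(a)> \eta \ \Longrightarrow\ \tau^{\e,\mathrm{dist}_{\e}}_{\mathrm{mix}}(\eta)\gqq a,
\]
the first being immediate from the definition and the second following because, by~\eqref{eq:monotonic}, $D_\e(t)\gqq D_\e(a)>\eta$ for all $t\lqq a$, so that no such $t$ belongs to the set over which the infimum is taken.

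For the implication (a)$\Rightarrow$(b) I would fix $\eta\in(0,\mathrm{Diam})$, set $r_\eta:=G^{-1}(\eta)$, and take an arbitrary $\delta>0$. Strict monotonicity of $G$ gives $G(r_\eta+\delta)<\eta<G(r_\eta-\delta)$, so by the profile limit~\eqref{ec:limitperfil} evaluated at $r_\eta\pm\delta$ there is $\e_0(\delta)>0$ such that for all $\e<\e_0(\delta)$ one has $D_\e\bigl(t_\e+(r_\eta+\delta)\omega_\e\bigr)<\eta$ and $D_\e\bigl(t_\e+(r_\eta-\delta)\omega_\e\bigr)>\eta$. The two implications above then yield $t_\e+(r_\eta-\delta)\omega_\e\lqq \tau^{\e,\mathrm{dist}_{\e}}_{\mathrm{mix}}(\eta)\lqq t_\e+(r_\eta+\delta)\omega_\e$, hence $\bigl|\tau^{\e,\mathrm{dist}_{\e}}_{\mathrm{mix}}(\eta)-t_\e-r_\eta\omega_\e\bigr|\lqq\delta\,\omega_\e$ for $\e<\e_0(\delta)$. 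Since $\delta>0$ is arbitrary this says precisely that $\bigl(\tau^{\e,\mathrm{dist}_{\e}}_{\mathrm{mix}}(\eta)-t_\e-r_\eta\omega_\e\bigr)/\omega_\e\to0$, which is~\eqref{ec:tmixp} with $G^{-1}(\eta)=r_\eta$.

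For the converse (b)$\Rightarrow$(a) I would fix $r\in\mathbb{R}$, put $\eta:=G(r)\in(0,\mathrm{Diam})$ so that $r=G^{-1}(\eta)$, and choose $\e'>0$ small enough that $\eta\pm\e'\in(0,\mathrm{Diam})$. Strict monotonicity of $G^{-1}$ gives $G^{-1}(\eta+\e')<r<G^{-1}(\eta-\e')$, and the two gaps $r-G^{-1}(\eta+\e')$ and $G^{-1}(\eta-\e')-r$ are strictly positive constants independent of $\e$; applying~\eqref{ec:tmixp} at the levels $\eta\pm\e'$ and absorbing the $\mathrm{o}(\omega_\e)$ remainders, there is $\e_1(\e')>0$ with
\[
\tau^{\e,\mathrm{dist}_{\e}}_{\mathrm{mix}}(\eta+\e')\ <\ t_\e+r\,\omega_\e\ <\ \tau^{\e,\mathrm{dist}_{\e}}_{\mathrm{mix}}(\eta-\e')\qquad\textrm{ for }\ \e<\e_1(\e').
\]
From the first strict inequality the set $\{t:D_\e(t)\lqq\eta+\e'\}$ contains some $t^\ast<t_\e+r\omega_\e$, so by~\eqref{eq:monotonic} $D_\e(t_\e+r\omega_\e)\lqq D_\e(t^\ast)\lqq\eta+\e'$; from the second, every $t\lqq t_\e+r\omega_\e$ lies strictly below $\tau^{\e,\mathrm{dist}_{\e}}_{\mathrm{mix}}(\eta-\e')$ and hence satisfies $D_\e(t)>\eta-\e'$, in particular $D_\e(t_\e+r\omega_\e)>\eta-\e'$. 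Thus $\eta-\e'<D_\e(t_\e+r\omega_\e)\lqq\eta+\e'$ for $\e<\e_1(\e')$, and since $\e'$ is arbitrary $D_\e(t_\e+r\omega_\e)\to\eta=G(r)$, which is~\eqref{ec:limitperfil}.

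The genuinely delicate point, and the one I would take care over, is the bookkeeping around the infimum defining $\tau^{\e,\mathrm{dist}_{\e}}_{\mathrm{mix}}(\eta)$: one must invoke~\eqref{eq:monotonic} both to pass from ``$D_\e$ is below (resp. above) the level $\eta$ at the single time $t_\e+r\omega_\e$'' to ``the mixing time has already (resp. not yet) occurred'', and in the reverse direction; keeping the margins $\delta\omega_\e$ and $\e'$ strictly positive is exactly what makes these passages clean and sidesteps any issue with whether the infimum is attained. Everything else reduces to the fact that $G$ and $G^{-1}$ are continuous strictly decreasing bijections between $\mathbb{R}$ and $(0,\mathrm{Diam})$, which also makes the continuity and strict monotonicity of $G_\ast=G^\ast=G$ needed elsewhere in Proposition~\ref{proposition asymptotic} automatic, so no separate argument is required there.
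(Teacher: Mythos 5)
Your proposal is correct and follows essentially the same strategy as the paper's proof: perturb the threshold by a small margin, apply the assumed limit (the profile in one direction, the mixing-time asymptotic in the other) at the perturbed points, and use the monotonicity hypothesis~\eqref{eq:monotonic} to pass between ``the distance at a single time is above/below $\eta$'' and ``the mixing time has/has not occurred''. The only (cosmetic) difference is that in (a)$\Rightarrow$(b) you perturb in the $r$-variable around $G^{-1}(\eta)$, obtaining the two-sided bound $|\tau^{\e,\mathrm{dist}_\e}_{\mathrm{mix}}(\eta)-t_\e-G^{-1}(\eta)\omega_\e|\lqq\delta\omega_\e$ directly, whereas the paper perturbs the level $\eta$ and invokes continuity of $G^{-1}$ at the end; both are valid.
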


\begin{proof}
We start assuming  that the statement of Item~(a) is valid. 

Let $\eta \in (0,\mathrm{Diam})$ be fixed. 
We first show that
\begin{equation}\label{eq:primera}
\limsup\limits_{\e\to 0^+}\frac{\tau_{\text {mix }}^{\e,x}(\eta)-\left(t_{\e}+{G}^{-1}(\eta)\cdot \omega_{\e}\right)}{\omega_{\e}} \lqq 0.
\end{equation}
We take $\delta^*$ such that $0<\delta^*<\eta$.
Since $G$ is a bijection, there exists a unique $r^* \in \mathbb{R}$ such that $G(r^*)=\eta-\delta^*$, i.e, $r^*={G}^{-1}(\eta-\delta^*)$.
For the preceding choice  $\eta$ and  $\delta^*$,
the limit~\eqref{ec:limitperfil} with the help of~\eqref{diameter} gives the existence of $\e^*=\e^*(\eta, \delta^*)>0$ such that
\begin{equation}\label{eq:F3}
\begin{split}
0&<\eta<\mathrm{Diam}_\e\quad \textrm{ and }\\
\operatorname{dist}_{\e}\left(t_{\e}+r^* \cdot\omega_{\e}\right)&<\delta^*+G\left(r^*\right)=\eta
\end{split}
\end{equation}
for all $0<\e<\e^*$. 
Thus~\eqref{eq:F3} with the help of the definition of $\eta$-mixing time yields
\begin{equation}
\begin{split}
\tau_{\text {mix }}^{\e,x}(\eta) & \lqq t_{\e}+r^*\cdot \omega_{\e} =t_{\e}+G^{-1}(\eta-\delta^*)\cdot\omega_{\e}\\
& =t_{\e}+G^{-1}(\eta) \cdot\omega_{\e}+\left(G^{-1}(\eta-\delta^*)-G^{-1}(\eta)\right)\cdot \omega_{\e}
\end{split}
\end{equation}
for all $0<\e<\e^*$. Hence, we obtain
\begin{equation}\label{eq:superior}
\limsup\limits_{\e\to 0^+}\frac{\tau_{\text {mix }}^{\e,x}(\eta)-\left(t_{\e}+{G}^{-1}(\eta)\cdot \omega_{\e}\right)}{\omega_{\e}} \lqq {G}^{-1}(\eta-\delta^*)-{G}^{-1}(\eta)
\end{equation}
for any $\eta>0$ and $\delta^*\in (0,\eta)$. Since $G$ is a strictly decreasing continuous function, then $G^{-1}$ is also a continuous function, see Proposition~3.6.6. in~\cite{Lebl2023}. Moreover,  the left-hand of~\eqref{eq:superior} does not depend on $\delta^*$, then tending $\delta^*\to 0^+$ we deduce~\eqref{eq:primera}. 

We now prove that
\begin{equation}\label{eq:segunda}
\liminf\limits_{\e\to 0^+}\frac{\tau_{\text {mix }}^{\e,x}(\eta)-\left(t_{\e}+{G}^{-1}(\eta)\cdot \omega_{\e}\right)}{\omega_{\e}} \gqq 0.
\end{equation}
We take $\delta_*>0$ such that $\eta+\delta_*<\mathrm{Diam}$.
Since $G$ is a bijection, there exists a unique $r_* \in \mathbb{R}$ such that $G(r_*)=\eta+\delta_*$, i.e, $r_*={G}^{-1}(\eta+\delta_*)$.
For the preceding choice  $\eta$ and  $\delta_*$,
the limit~\eqref{ec:limitperfil} with the help of~\eqref{diameter} implies the existence of $\e_*=\e_*(\eta, \delta_*)>0$ such that
\begin{equation}\label{eq:Hh3}
\begin{split}
0&<\eta<\mathrm{Diam}_\e\quad \textrm{ and }\\
\eta&=-\delta_*+G\left(r_*\right)<
\operatorname{dist}_{\e}\left(t_{\e}+r_* \cdot\omega_{\e}\right)
\end{split}
\end{equation}
for all $0<\e<\e_*$. 
Thus~\eqref{eq:Hh3} with the help of the definition of $\eta$-mixing time yields
\begin{equation}
\begin{split}
\tau_{\text {mix }}^{\e,x}(\eta) & \gqq t_{\e}+r_* \cdot\omega_{\e} =t_{\e}+G^{-1}(\eta+\delta_*)\cdot\omega_{\e}\\
& =t_{\e}+G^{-1}(\eta)\cdot \omega_{\e}+\left(G^{-1}(\eta+\delta_*)-G^{-1}(\eta)\right)\cdot \omega_{\e}
\end{split}
\end{equation}
for all $0<\e<\e^*$, and therefore
\begin{equation}\label{eq:inferior}
\liminf\limits_{\e\to 0^+}\frac{\tau_{\text {mix }}^{\e,x}(\eta)-\left(t_{\e}+{G}^{-1}(\eta)\cdot \omega_{\e}\right)}{\omega_{\e}} \gqq  {G}^{-1}(\eta+\delta_*)-{G}^{-1}(\eta)
\end{equation}
for any $\eta>0$ and $\delta_*$ such that $\eta+\delta^*<\mathrm{Diam}$.
Since $G^{-1}$ is a continuous function and 
 the right-hand of~\eqref{eq:inferior} does not depend on $\delta_*$, tending $\delta_*\to 0^+$ we deduce~\eqref{eq:segunda}. 
By~\eqref{eq:primera} and~\eqref{eq:segunda} we obtain the statement of Item~(b).

In the sequel, we assume that the statement of Item~(b) is valid. 
Let $r\in \mathbb{R}$ be fixed. 

We start with the proof of 
\begin{equation}\label{eq:upper}
\limsup\limits_{\e\to 0^+}
\mathrm{dist}_\e(\mathrm{Law}\,\mathcal{X}^{\e}_{t_{\e}+r\cdot \omega_{\e}},\nu^\e)\lqq G(r).
\end{equation}
Assume that $\delta^*>0$ and take $\eta^{*}:=G(r-\delta^*)\in (0,\mathrm{Diam})$.
By hypothesis $G$ is a bijection and then we have that $r=G^{-1}(\eta^*)+\delta^*$. 
For the preceding choice of $\delta^*>0$ and $\eta^*\in (0,\mathrm{Diam})$,
the asymptotic~\eqref{ec:tmixp} with the help of~\eqref{diameter} yields the existence of $\e^*:=\e^*(r,\delta^*)>0$ such that 
\begin{equation}\label{ec:tmixp2}
\begin{split}
0&<\eta^*<\mathrm{Diam}_\e\quad 
\textrm{ and }\\
\tau^{\e,\mathrm{dist}_{\e}}_{\mathrm{mix}}(\eta^*)&<t_\e+ (G^{-1}(\eta^*)+\delta^*)\cdot \omega_\e
=t_\e+ r\cdot \omega_\e
\end{split}
\end{equation}
for all $\e\in (0,\e^*)$.
The definition of $\eta^*$-mixing time with the help 
of~\eqref{eq:monotonic} yields
\begin{equation}
\mathrm{dist}_\e(\mathrm{Law}\,\mathcal{X}^{\e}_{t_{\e}+r\cdot \omega_{\e}},\nu^\e)\lqq \eta^*=G(r-\delta^*)\quad \textrm{ for all }\quad \e\in (0,\e^*).
\end{equation}
Therefore, 
\begin{equation}\label{ec:limsup}
\limsup\limits_{\e\to 0^+}
\mathrm{dist}_\e(\mathrm{Law}\,\mathcal{X}^{\e}_{t_{\e}+r\cdot \omega_{\e}},\nu^\e)\lqq G(r-\delta^*).
\end{equation}
Since the left right-hand 
of~\eqref{ec:limsup} does not depend on $\delta^*$ and the function $G$ is continuous, tending $\delta^*\to 0^+$ we 
deduce~\eqref{eq:upper}.  

We continue with the proof of 
\begin{equation}\label{eq:lower}
\liminf\limits_{\e\to 0^+}
\mathrm{dist}_\e(\mathrm{Law}\,\mathcal{X}^{\e}_{t_{\e}+r\cdot \omega_{\e}},\nu^\e)\gqq  G(r).
\end{equation}
Assume that $\delta_*>0$ and take $\eta_{*}:=G(r+\delta_*)\in (0,\mathrm{Diam})$, i.e.,
$r=G^{-1}(\eta_*)-\delta_*$. 
For the preceding choice of $\delta_*>0$ and $\eta_*\in (0,\mathrm{Diam})$,
the asymptotic~\eqref{ec:tmixp} with the help of~\eqref{diameter} yields the existence of $\e_*:=\e_*(r,\delta_*)>0$ such that
\begin{equation}\label{ec:tmixplower}
\begin{split}
0&<\eta_*<\mathrm{Diam}_\e\quad 
\textrm{ and }\\
0&<t_\e+ r\cdot \omega_\e=t_\e+ (G^{-1}(\eta_*)-\delta_*)\cdot \omega_\e
<\tau^{\e,\mathrm{dist}_{\e}}_{\mathrm{mix}}(\eta_*)
\end{split}
\end{equation}
for all $\e\in (0,\e_*)$.
The definition of $\eta_*$-mixing time with the help 
of~\eqref{eq:monotonic} yields
\begin{equation}
\mathrm{dist}_\e(\mathrm{Law}\,\mathcal{X}^{\e}_{t_{\e}+r\cdot \omega_{\e}},\nu^\e)\gqq  \eta_*=G(r+\delta_*)\quad 
\textrm{ for all }\quad \e\in (0,\e_*).
\end{equation}
Therefore, 
\begin{equation}\label{ec:liminf}
\liminf\limits_{\e\to 0^+}
\mathrm{dist}_\e(\mathrm{Law}\,\mathcal{X}^{\e}_{t_{\e}+r\cdot \omega_{\e}},\nu^\e)\gqq  G(r+\delta_*).
\end{equation}
Since the left right-hand of~\eqref{ec:limsup} does not depend on $\delta_*$ and the function $G$ is continuous, tending $\delta_*\to 0^+$ we deduce~\eqref{eq:lower}. 

By~\eqref{eq:upper} and~\eqref{eq:lower} we obtain the statement of Item~(a).
\end{proof}

\section*{Declarations}
\noindent
\textbf{Acknowledgments.}
G.~Barrera would like to express his gratitude to the
Department of Mathematics and
Statistics at
University of Helsinki, Helsinki, (Helsinki, Finland) 
and the Instituto Superior T\'ecnico (Lisbon, Portugal) for all the facilities used along the realization of this work. 
G.~Barrera would also like to thank IMPA for support and hospitality during the 2023 Post-Doctoral Summer Program, where partial work on this paper was undertaken.
L.~Esquivel would like  to express her gratitude to the Department of Mathematics at Universidad del Valle,  (Cali, Colombia) and Department of Mathematical Sciences at Universidad de Puerto Rico (Mayag\"uez, Puerto Rico) for all the facilities used along the realization of this work. 

\noindent
\textbf{Ethical approval.} Not applicable.

\noindent
\textbf{Competing interests.} The authors declare that they have no conflict of interest.

\noindent
\textbf{Authors' contributions.}
Both authors have contributed equally to the paper. 

\noindent
\textbf{Availability of data and materials.} Data sharing not applicable to this article, as no data sets were generated or analyzed during the current study.

\noindent
\textbf{Funding.}
The research of G.~Barrera has been supported by the Academy of Finland,
via an Academy project (project No.~339228) and the Finnish Centre of Excellence in Randomness and STructures (project No.~346306).




\begin{thebibliography}{100}
\bibitem{Aldous}
\textsc{Aldous, D. and Diaconis, P.} (1986). 
Shuffling cards and stopping times.
\textit{Am. Math. Monthly} \textbf{93} (5) 333--348.

\bibitem{Alfonsi}\textsc{Alfonsi, A.} (2015).
\textit{Affine diffusions and related processes: simulation, theory and applications}.
Bocconi \& Springer Series \textbf{6}. Springer ChamBocconi University Press.

\bibitem{Ambrosio} 
\textsc{Ambrosio, L., Bru\'e, E. and Semola, D.} (2021).
\textit{Lectures on optimal transport}. 
Unitext \textbf{130}.
Springer Cham.

\bibitem{Avelin} 
\textsc{Avelin, B. and  Karlsson, A.} (2022).
Deep limits and a cut-off phenomenon for neural networks.
\textit{J. Mach. Learn. Res.} \textbf{23} Paper No. 191, 29 pp. 

\bibitem{Bakeretalt2018} 
\textsc{Baker, J., Chigansky, P., Hamza, K. and Klebaner, F.} (2018).
Persistence of small noise and random initial conditions.
\textit{Adv. in Appl. Probab.} \textbf{50} (A) 67--81.

\bibitem{Barrera2018} 
\textsc{Barrera, G.} (2018).
Abrupt convergence for a family of
Ornstein-Uhlenbeck processes.
\textit{Braz. J. Probab. Stat.} \textbf{32} (1) 188--199.

\bibitem{BAmax} 
\textsc{Barrera, G.} (2021).
Cutoff phenomenon for the maximum of a sampling of Ornstein--Uhlenbeck processes.
\textit{Statist. Probab. Lett.} \textbf{168} Paper No. 108954, 7 pp.
\bibitem{BCostaJara} 
\textsc{Barrera, G., Da Costa, C. and Jara, M.} (2022).
Gradual convergence for Langevin dynamics on a degenerate potential.
ArXiv 2209.11026.


\bibitem{BHChaos} 
\textsc{Barrera, G. and H\"ogele, M.A.} (2023).
Ergodicity bounds for stable Ornstein-Uhlenbeck systems in Wasserstein distance with applications to cutoff stability.
\textit{Chaos: An Interdisciplinary Journal of Nonlinear Science} \textbf{2023} (33) No. 11 113124 19 pp.

\bibitem{BHPWA}
\textsc{Barrera, G., H\"ogele, M.A. and Pardo, J.C.} (2021).
Cutoff thermalization for Ornstein-Uhlenbeck systems with small L\'evy noise in the Wasserstein distance.
\textit{J. Stat. Phys.} \textbf{184} (3) Paper No. 27, 54 pp.

\bibitem{BarreraEJP2021} 
\textsc{Barrera, G., H\"ogele, M.A. and Pardo, J.C.} (2021). 
The cutoff phenomenon in total variation for nonlinear Langevin systems with small layered stable noise.
\textit{Electron. J. Probab.} \textbf{26} Paper No. 119, 76 pp.

\bibitem{BPHGBM} 
\textsc{Barrera, G., H\"ogele, M.A. and Pardo, J.C.} 
(2022).
Cutoff stability of multivariate geometric Brownian motion.
ArXiv 2207.01666.

\bibitem{BPHWASD} 
\textsc{Barrera, G., H\"ogele, M.A. and Pardo, J.C.} (2024).
The cutoff phenomenon in Wasserstein distance for nonlinear stable Langevin systems with small L\'evy noise.
\textit{J. Dyn. Diff. Equat.} 
\textbf{36} (1)  251--278.

\bibitem{BHPPshell} 
\textsc{Barrera, G., H\"ogele, M.A., Pardo, J.C. and Pavlyukevich, P.} (2024).
Cutoff ergodicity bounds in Wasserstein distance for a viscous energy shell model with L\'evy noise.
\textit{J. Stat. Phys.} \textbf{191} (105) 24 pp.

\bibitem{BPHSPDE} 
\textsc{Barrera, G., H\"ogele, M.A. and Pardo, J.C.} (2023).
The cutoff phenomenon for the stochastic heat and wave equation subject to small L\'evy noise.
\textit{Stoch. Partial Differ. Equ. Anal. Comput.} \textbf{11} (3) 1164--1202.

\bibitem{BJJSP} 
\textsc{Barrera, G. and Jara, M.} (2016).
Abrupt convergence of stochastic small perturbations of one dimensional dynamical systems. 
\textit{J. Stat. Phys.} \textbf{163} (1) 113--138.

\bibitem{BJAAP}
\textsc{Barrera, G. and Jara, M.} (2020). 
Thermalisation for small random perturbations of dynamical systems.
\textit{Ann. Appl. Probab.} \textbf{30} (3) 1164--1208.

\bibitem{BarreraLiu2021} 
\textsc{Barrera, G. and Liu, S.} (2021).
A switch convergence for a small perturbation of a linear
recurrence equation. 
\textit{Braz. J. Probab. Stat.} \textbf{35} (2) 224--241.

\bibitem{BPEJP2020} 
\textsc{Barrera, G. and Pardo, J.C.} (2020).
Cut-off phenomenon for Ornstein--Uhlenbeck processes driven by L\'evy processes.
\textit{Electron. J. Probab.} \textbf{25} Paper No. 15, 33 pp.

\bibitem{BarreraJaviera2}
\textsc{Barrera, J., Bertoncini, O. and Fern\'andez, R.} (2009).
Abrupt convergence and escape behavior for birth and death chains.
\textit{J. Stat. Phys.} \textbf{137} (4) 595--623.

\bibitem{BarreraJaviera3} 
\textsc{Barrera, J., Lachaud, B. and Ycart, B.} 
(2006). 
Cut-off for n-tuples of exponentially converging processes.
\textit{Stochastic Process. Appl.} \textbf{116} (10) 1433--1446.

\bibitem{BarreraJaviera1} 
\textsc{Barrera, J. and Ycart, B.} (2014).
Bounds for left and right window cutoffs.
\textit{ALEA, Lat.Am. J. Probab. Math. Stat.} \textbf{11} (2) 445--458.

\bibitem{BOK10} 
\textsc{Bayati, B., Owahi, H. and Koumoutsakos, P.} (2010).
A cutoff phenomenon in accelerated stochastic simulations of chemical kinetics via flow averaging (FLAVOR-SSA).
\textit{J. Chem. Phys.} \textbf{133} (24) Paper No. 244117, 6 pp.

\bibitem{Bobkov 2013}
\textsc{Bobkov, S.G.} (2013). 
Entropic approach to E. Rio's central limit theorem for transport distance.
\textit{Statist. Probab. Lett.} \textbf{83} (7) 1644--1648.

\bibitem{Boos} 
\textsc{Boos, D.} (1985).
A converse to Scheffe's theorem.
\textit{Ann. Statist.} \textbf{13} (1) 423--427.

\bibitem{Bordenave}
\textsc{Bordenave, C., Caputo, P. and Salez, J.} (2019). 
Cutoff at the ``entropic time'' for sparse Markov chains.
\textit{Probab. Theory Related Fields} \textbf{173} (1-2) 261--292.

\bibitem{Boursier} 
\textsc{Boursier, J., Chafa\"i, D. and Labb\'e, C.} (2023).
Universal cutoff for Dyson Ornstein Uhlenbeck process. 
\textit{Probab. Theory Related Fields} \textbf{185} (1-2) 449--512.

\bibitem{Bresar} 
\textsc{Bre\v{s}ar, M. and Mijatovi\'c, A.} (2024).
Subexponential lower bounds for $f$-ergodic Markov processes. 
\textit{Probab. Theory Related Fields}.

\bibitem{Ceccheetalt}
\textsc{Ceccherini-Silberstein, T., Scarabotti, F. and Tolli, F.} (2008).
\textit{Harmonic analysis on finite groups.
Representation theory, Gelfand pairs and Markov chains}.
Cambridge Stud. Adv. Math. \textbf{108}
Cambridge University Press, Cambridge.

\bibitem{Chae}
\textsc{Chae, M. and Walker, S.G.} (2020).
Wasserstein upper bounds of the total variation for smooth densities.
\textit{Statist. Probab. Lett.} \textbf{163} Paper No. 108771, 6 pp.

\bibitem{Chenbook}
\textsc{Chen, L.H., Goldstein, L. and Shao, Q.-M.} (2011).
\textit{Normal approximation by Stein's method}.
Probab. Appl.
Springer Heidelberg.

\bibitem{Coxetalt1985} 
\textsc{Cox, J., Ingersoll, J. and Ross, S. }
(1985).
A theory of the term structure of interest rates.
\textit{Econometrica} \textbf{53} (2) 385--407.

\bibitem{DasGupta} 
\textsc{DasGupta, A.} (2008). 
\textit{Asymptotic theory of statistics and probability}.
Springer Texts Statist.
Springer New York.

\bibitem{Devroyeetalt2001} 
\textsc{Devroye, L. and  Lugosi, G. } (2001).
\textit{Combinatorial methods in density estimation}.
Springer Ser. Statist.
Springer-Verlag New York.

\bibitem{Diaconis1996}
\textsc{Diaconis, P.} (1996).
The cutoff phenomenon in finite Markov chains.
\textit{Proc. Nat. Acad. Sci. U.S.A.} \textbf{93} (4)  1659--1664.

\bibitem{PDIbook} 
\textsc{Diaconis, P.} (1988).
\textit{Group representations in probability and statistics}. 
Institute of Mathematical Statistics Lecture Notes
Monograph Series \textbf{11}. Institute of Mathematical Statistics, Hayward, CA.

\bibitem{Ding2009}
\textsc{Ding, J., Lubetzky, E. and Peres, Y.} (2009).
The mixing time evolution of Glauber dynamics for the mean-field Ising model.
\textit{Comm. Math. Phys.} \textbf{289} (2) 725--764.

\bibitem{Elboim2022}
\textsc{Elboim, D. and Schmid, D.} (2024).
Mixing times and cutoff for the TASEP in the high and low density phase.
\textit{Probab. Math. Phys.} \textbf{5} (2)  413--459.

\bibitem{EthierKurtz}
\textsc{Ethier, S.N. and Kurtz, T.G.} (1986). 
\textit{Markov processes. characterization and convergence}. 
Wiley Ser. Probab. Math. Statist. 
John Wiley \& Sons, Inc. New York.

\bibitem{Feller1951} 
\textsc{Feller, W.} (1951).
Two singular diffusion problems.
\textit{Ann. of Math.} \textbf{54} (1)  173--182.

\bibitem{Figalli}
\textsc{Figalli, A. and Glaudo, F.} (2023).
\textit{An invitation to optimal transport, Wasserstein distances, and gradient flows}.
EMS Textbk. Math.
EMS Press Berlin.

\bibitem{Friesen}
\textsc{Friesen, M., Jin, P., Kremer, J. and R\"udiger, B.} (2023). 
Exponential ergodicity for stochastic equations of nonnegative processes with jumps. 
\textit{ALEA, Lat. Am. J. Probab. Math. Stat.} \textbf{20} 593--627.

\bibitem{Fu}
\textsc{Fu, Z. and  Li, Z.} (2010). 
Stochastic equations of non-negative processes with jumps.
\textit{Stochastic Process. Appl.} \textbf{120} (3) 306--330.

\bibitem{Gantert20231}
\textsc{Gantert, N., Nestoridi, E. and Schmid, D.} (2023).
Mixing times for the simple exclusion process with open boundaries.
\textit{Ann. Appl. Probab.} \textbf{33} (2) 1172--1212.

\bibitem{Goncalves2023}
\textsc{Gon\c{c}alves, P., Jara, M., Menezes, O. and Marinho, R.} (2024+).
Sharp convergence to equilibrium for the SSEP with reservoirs. 
To appear in  \textit{Ann. Inst. Henri Poincar\'e Probab. Stat.} 


\bibitem{Ibragimov} 
\textsc{Ibragimov, I.D.}
(1956). 
On the composition of unimodal distributions.
\textit{Teor. Veroyatnost. i Primenen.} \textbf{1} 283--288.

\bibitem{Ikeda}
\textsc{Ikeda, N. and Watanabe, S.} (1981). 
\textit{Stochastic differential equations and diffusion processes}. 
North-Holland Publishing Co.

\bibitem{Jinetalt2019} 
\textsc{Jin, P., Kremer, J. and R\"udiger, B.}
(2019).
Moments and ergodicity of the jump-diffusion CIR process.
\textit{Stochastics} \textbf{91} (7) 974--997.

\bibitem{Jinetalt2013} 
\textsc{Jin, P., Mandrekar, V., R\"udiger, B. and Trabelsi, C.} (2013).
Positive Harris recurrence of the CIR process and its applications.
\textit{Commun. Stoch. Anal.} \textbf{7} (3) 409--424.

\bibitem{Kawata} 
\textsc{Kawata, T.} (1972). 
\textit{Fourier analysis in probability theory}.
Probab. Math. Statist. \textbf{15},
Academic Press, New York-London.

\bibitem{Kastoryano12} 
\textsc{Kastoryano, M.,  Reeb, D. and  Wolf, M.}  (2012). 
A cutoff phenomenon for quantum Markov chains.
\textit{J. Phys. A} \textbf{45} (7) Paper No. 075307, 16 pp.

\bibitem{Klenke2020} 
\textsc{Klenke, A.} (2020).
\textit{Probability theory. A comprehensive course}.
Universitext
Springer Cham.

\bibitem{Arturo}
\textsc{Kohatsu-Higa, A. and Takeuchi, A.} (2019). 
\textit{Jump SDEs and the study of their densities}.
Universitext
Springer Singapore.

\bibitem{Kusuoka} 
\textsc{Kusuoka, S. and Tudor, C.A.} (2012). 
Stein's method for invariant measures of diffusions via Malliavin calculus.
\textit{Stochastic Process. Appl.}  \textbf{122} (4) 1627--1651.

\bibitem{Labbe2022}
\textsc{Labb\'e, C. and  Petit, E.} (2024+).
Hydrodynamic limit and cutoff for the biased adjacent walk on the simplex.
To appear in \textit{Ann. Inst. Henri Poincar\'e Probab. Stat.}

\bibitem{Lachaud2005}
\textsc{Lachaud, B.} (2005).
Cut-off and hitting times of a sample of Ornstein--Uhlenbeck processes and its
average.
\textit{J. Appl. Probab.} \textbf{42} (4) 1069--1080. 

\bibitem{Lacoin2017}
\textsc{Lacoin, H.} (2017).
The simple exclusion process on the circle has a diffusive cutoff window.
\textit{Ann. Inst. Henri Poincar\'e Probab. Stat.} \textbf{53} (3)  
1402--1437.

\bibitem{Lebl2023}  
\textsc{Lebl, J.} (2023).
\textit{Basic analysis I. Introduction to real analysis, volume I.}
\url{https://www.jirka.org/ra/realanal.pdf}

\bibitem{Ledoux}
\textsc{Ledoux, M., Nourdin, I. and Peccati, G.} (2015). 
Stein's method, logarithmic Sobolev and transport inequalities.
\textit{Geom. Funct. Anal.} \textbf{25} (1) 256--306.

\bibitem{LeeRamilInsuk2023} 
\textsc{Lee, S., Ramil, M. and Seo, I.} (2023).
Asymptotic stability and cut-off phenomenon for the underdamped Langevin dynamics.
ArXiv 2311.18263.

\bibitem{Levin2009}  
\textsc{Levin, D., Peres, Y. and Wilmer, E.} (2009).
\textit{Markov chains and mixing times}. 
With a chapter by James G. Propp and D. Wilson. 
Amer. Math. Soc., Providence RI.

\bibitem{Ley}
\textsc{Ley, C., Reinert, G. and Swan, Y.} (2017). 
Stein's method for comparison of univariate distributions.
\textit{Probab. Surv.} \textbf{14} 1--52.
 
\bibitem{Li2020} 
\textsc{Li, Y. and Wang, S.} (2020).
Numerical computations of geometric ergodicity for stochastic dynamics. 
\textit{Nonlinearity} \textbf{33} (12)  6935--6970.

\bibitem{Lindvall}
\textsc{Lindvall, T.} (2002). 
\textit{Lectures on the coupling method}.
Dover Publications, Inc., Mineola NY.

\bibitem{Lubetzky2010}
\textsc{Lubetzky, E. and Sly, A.} (2010).
Cutoff phenomena for random walks on random regular graphs.
\textit{Duke Math. J.} \textbf{153} (3)  475--510. 

\bibitem{Lukacs} 
\textsc{Lukacs, E.} (1970).
\textit{Characteristic Functions}.
Second edition.
Hafner Publishing Co. New York.

\bibitem{Meliot}
\textsc{M\'eliot, P.L.} (2014). 
The cut-off phenomenon for Brownian motions on symmetric spaces of compact type.
\textit{Potential Anal.} \textbf{40} (4) 427--509.

\bibitem{Merle2019}
\textsc{Merle, M. and Salez, J.} (2019). 
Cutoff for the mean-field zero-range process.
\textit{Ann. Probab.} \textbf{47} (5) 3170--3201.

\bibitem{Meyn}
\textsc{Meyn, S.P. and Tweedie, R.L.} (1993). 
\textit{Markov chains and stochastic stability}.
Comm. Control Engrg. Ser.
Springer-Verlag London, Ltd. London.

\bibitem{Murray} 
\textsc{Murray, R. and Pego, R} (2016).
Cutoff estimates for the linearized Becker-D\"oring equations 
\textit{Commun. Math.
Sci.} \textbf{15} (6)  1685--1702. 

\bibitem{Nourdin}
\textsc{Nourdin, I. and Peccati, G.} (2012).
\textit{Normal approximations with Malliavin calculus: from Stein's method to universality}.
Cambridge Tracts in Math. \textbf{192}.
Cambridge University Press, Cambridge.

\bibitem{Oh}
\textsc{Oh, S. and Kais, S.} (2023).
Cutoff phenomenon and entropic uncertainty for random quantum circuits.
\textit{Electron. Struct.} \textbf{5} (3)  7 pp.

\bibitem{Panaretos2020}
\textsc{Panaretos, V.M. and Zemel, Y.} (2020). 
\textit{An invitation to statistics in Wasserstein space}.
SpringerBriefs Probab. Math. Stat.
Springer Cham.

\bibitem{Peyreeee}
\textsc{Peyr\'e, G. and Cuturi, M.} (2019).
\textit{Computational optimal transport: With applications to data science}.
Foundations and Trends in Machine Learning \textbf{11} (5-6).

\bibitem{Shen}
\textsc{Qi, W., Shen, Z. and Yi, Y}. (2024). Large deviation principle for quasi-stationary distributions and multiscale dynamics of absorbed singular diffusions. \textit{Probab. Theory Related Fields} \textbf{188} (1-2) 667--728.

\bibitem{Quattropani2023} 
\textsc{Quattropani, M. and Sau, F.} (2023).
Mixing of the averaging process and its discrete dual on finite-dimensional geometries.
\textit{Ann. Appl. Probab.} \textbf{33} (2)  1136--1171.

\bibitem{Reiss1989}
\textsc{Reiss, R.} (1989).
\textit{Approximate distributions of order statistics.
With applications to nonparametric statistics}. Springer Series in Statistics. Springer-Verlag New York.

\bibitem{Sato1999} 
\textsc{Sato, K.} (1999).
\textit{L\'evy processes and infinitely divisible distributions}.
Cambridge University Press, Cambridge.

\bibitem{Smith2017}
\textsc{Smith, A.} (2019). 
The cutoff phenomenon for random birth and death chains.
\textit{Random Structures Algorithms} \textbf{50} (2) 287--321.

\bibitem{Sweeting}
\textsc{Sweeting, T.J.} (1986). 
On a converse to Scheff\'e's theorem.
\textit{Ann. Statist.} \textbf{14} (3) 1252-1256.



\bibitem{Vernier20}
\textsc{Vernier, E.} (2020).
Mixing times and cutoffs in open quadratic fermionic systems.
\textit{SciPost Phys.} \textbf{9} (049) 1--30.

\bibitem{Villani2009} 
\textsc{Villani, C.} (2009). 
\textit{Optimal transport. Old and new}. Grundlehren der mathematischen Wissenschaften \textbf{338}. Springer-Verlag Berlin. 
\end{thebibliography}
\end{document}